\documentclass[12pt]{article}
\usepackage{mathtools}
\usepackage{amsmath,diagbox}
\usepackage{indentfirst}
\usepackage{mathrsfs}
\usepackage{amsfonts}
\usepackage{arydshln}
\usepackage{enumerate}
\usepackage{setspace}
\usepackage{amssymb,amsthm,cases}
\usepackage{amsbsy,pifont}
\usepackage{latexsym,diagbox,tabularx}
\usepackage{amsmath,amscd,bbm,multirow}
\usepackage{graphicx,epsfig,extarrows,dsfont,mathtools}
\usepackage[final,allcolors=blue,colorlinks=true]{hyperref}
\usepackage[normalem]{ulem}
\input xy
\usepackage{caption,subcaption} 
\usepackage{tikz}
\usepackage{tikz-cd}
\usetikzlibrary{decorations.pathreplacing,decorations.markings}
\usetikzlibrary{cd}
\usetikzlibrary{shapes.geometric}
\usetikzlibrary{arrows,arrows.meta}
\usetikzlibrary{graphs,positioning}
\usetikzlibrary{matrix,decorations.pathmorphing}
\usetikzlibrary{math,calc,intersections,through,angles,arrows.meta,shapes.geometric,shadows,quotes,spy,datavisualization,datavisualization.formats.functions,plotmarks}
\tikzset{every picture/.style={samples=300,smooth,line join=round,thick,>=stealth}}
\usepackage{enumerate}
\usepackage{fancyhdr}
\usepackage[sectionbib]{chapterbib}

\tikzset{
	on each segment/.style={
		decorate,
		decoration={
			show path construction,
			moveto code={},
			lineto code={
				\path [#1]
				(\tikzinputsegmentfirst) -- (\tikzinputsegmentlast);
			},
			curveto code={
				\path [#1] (\tikzinputsegmentfirst)
				.. controls
				(\tikzinputsegmentsupporta) and (\tikzinputsegmentsupportb)
				..
				(\tikzinputsegmentlast);
			},
			closepath code={
				\path [#1]
				(\tikzinputsegmentfirst) -- (\tikzinputsegmentlast);
			},
		},
	},
	mid arrow/.style={postaction={decorate,decoration={
				markings,
				mark=at position .5 with {\arrow[#1]{stealth}}
	}}},
}

\numberwithin{equation}{section}
\theoremstyle{plain}
\newtheorem{theorem}{Theorem}[section]

\newtheorem{corollary}[theorem]{Corollary}

\newtheorem{lemma}[theorem]{Lemma}
\newtheorem{proposition}[theorem]{Proposition}

\theoremstyle{definition}
\newtheorem{definition}[theorem]{Definition}

\newtheorem{remark}[theorem]{Remark}

\newtheorem{notation}[theorem]{Notation}

\def\XXint#1#2#3{{\setbox0=\hbox{$#1{#2#3}{\int}$}
		\vcenter{\hbox{$#2#3$}}\kern-.5\wd0}}

\DeclareMathSymbol{\subseteq}{\mathrel}{symbols}{"12}
\DeclareMathSymbol{\supseteq}{\mathrel}{symbols}{"13} 
\DeclareMathSymbol{\subsetneq}{\mathrel}{AMSb}{"28}                  \DeclareMathSymbol{\supsetneq}{\mathrel}{AMSb}{"29}    
        
\DeclareMathSymbol{\nsubseteq}{\mathrel}{AMSb}{"2A}                  

\DeclareMathSymbol{\nsupseteq}{\mathrel}{AMSb}{"2B}

\DeclareMathDelimiter{\langle}{\mathop}{symbols}{"68}{largesymbols}{"0A}
\DeclareMathDelimiter{\rangle}{\mathclose}{symbols}{"69}{largesymbols}{"0B}
\DeclareSymbolFont{txfontsA}{U}{txmia}{m}{it}
\SetSymbolFont{txfontsA}{bold}{U}{txmia}{bx}{it}
\DeclareFontSubstitution{U}{txmia}{m}{it}
\DeclareMathSymbol{\upalpha}{\mathord}{txfontsA}{"0B}
\DeclareMathSymbol{\upbeta}{\mathord}{txfontsA}{"0C}
\DeclareMathSymbol{\upgamma}{\mathord}{txfontsA}{"0D}
\DeclareMathSymbol{\updelta}{\mathord}{txfontsA}{"0E}
\DeclareMathSymbol{\upepsilon}{\mathord}{txfontsA}{"0F}
\DeclareMathSymbol{\upzeta}{\mathord}{txfontsA}{"10}
\DeclareMathSymbol{\upeta}{\mathord}{txfontsA}{"11}
\DeclareMathSymbol{\uptheta}{\mathord}{txfontsA}{"12}
\DeclareMathSymbol{\upiota}{\mathord}{txfontsA}{"13}
\DeclareMathSymbol{\upkappa}{\mathord}{txfontsA}{"14}
\DeclareMathSymbol{\uplambda}{\mathord}{txfontsA}{"15}
\DeclareMathSymbol{\upmu}{\mathord}{txfontsA}{"16}
\DeclareMathSymbol{\upnu}{\mathord}{txfontsA}{"17}
\DeclareMathSymbol{\upxi}{\mathord}{txfontsA}{"18}
\DeclareMathSymbol{\uppi}{\mathord}{txfontsA}{"19}
\DeclareMathSymbol{\uprho}{\mathord}{txfontsA}{"1A}
\DeclareMathSymbol{\upsigma}{\mathord}{txfontsA}{"1B}
\DeclareMathSymbol{\uptau}{\mathord}{txfontsA}{"1C}
\DeclareMathSymbol{\upupsilon}{\mathord}{txfontsA}{"1D}
\DeclareMathSymbol{\upphi}{\mathord}{txfontsA}{"1E}
\DeclareMathSymbol{\upchi}{\mathord}{txfontsA}{"1F}
\DeclareMathSymbol{\uppsi}{\mathord}{txfontsA}{"20}
\DeclareMathSymbol{\upomega}{\mathord}{txfontsA}{"21}
\DeclareMathSymbol{\upvarepsilon}{\mathord}{txfontsA}{"22}
\DeclareMathSymbol{\upvartheta}{\mathord}{txfontsA}{"23}
\DeclareMathSymbol{\upvarpi}{\mathord}{txfontsA}{"24}
\DeclareMathSymbol{\upvarrho}{\mathord}{txfontsA}{"25}
\DeclareMathSymbol{\upvarsigma}{\mathord}{txfontsA}{"26}
\DeclareMathSymbol{\upvarphi}{\mathord}{txfontsA}{"27}                   

\DeclareSymbolFont{ugmL}{OMX}{mdugm}{m}{n}
\SetSymbolFont{ugmL}{bold}{OMX}{mdugm}{b}{n}
\DeclareMathAccent{\wideparen}{\mathord}{ugmL}{"F3}

\def\pt{\partial}
\def\mc{\mathcal}
\def\ra{\rightarrow}

\def\s{\subseteq}

\def\e{\epsilon}

\def\ol{\overline}

\def\vp{\varphi}

\def\bf{\textbf}
\def\pt{\partial}

\def\Om{\Omega}
\def\la{\lambda}
\def\al{\alpha}
\def\be{\beta}
\def\de{\delta}

\def\De{\Delta}

\def\Ga{\Gamma}

\def\ts{\times}

\def\iy{\infty}

\def\f{\frac}

\def\Lra{\Leftrightarrow}

\def\df{\mathrm d}

\def\wt{\widetilde}
\def\wh{\widehat}

\def\esssup{\operatorname*{ess\ \! sup}}

\def\hra{\hookrightarrow}

\def\mcA{\mathcal{A}}

\def\mcM{\mathcal{M}}

	\DeclareMathOperator{\im}{i}

	\DeclareMathOperator{\Div}{div}

	\DeclareMathOperator{\Real}{Re}
	\DeclareMathOperator{\Image}{Im}

	\DeclareMathOperator{\supp}{{supp}}

	\newcommand{\R}{\mathbb R}
	\newcommand{\N}{\mathbb N}

	\newcommand{\Cc}{\mathbb C}

\begin{document}
	
\title{{   Observability Inequality for a Multi-Dimensional Degenerate
Schr\"odinger Equation with an Interior Degeneracy Point}}
\date{}

\author{ Dong-Hui Yang$^{a}$, Ghadir Shokor$^{b}$\footnote{\small
		The corresponding author. Email: ghadirshokor1@gmail.com}, Mengze Gu$^c$, Jie Zhong$^{d}$\\
	$^a${\it School of Mathematics and Statistics, Central South University}\\
	{\it Changsha 410075, P.R.China}\\
	$^a${\it School of Mathematics and Statistics, Beijing Institute of Technology}\\
	{\it Beijing 100000, China}\\
	$^c${\it School of Mathematics and Science, Changsha Normal University}\\ {\it  Changsha 410083, China}\\
	$^d${\it California State University Los Angeles}\\
	{\it Los Angeles, 90032, USA}\\  }
	
	\maketitle{}
	\thispagestyle{empty}

\begin{abstract}
We study the observability inequality for a multi-dimensional degenerate Schr\"odinger equation possessing an interior degeneracy point, where the principal coefficient vanishes at the origin inside the domain and breaks ellipticity. Within a weighted Sobolev space, we establish the spectral properties, well-posedness, and regularity estimates for the degenerate operator. To tackle the failure of classical controllability tools, we develop a regularization approximation scheme. We construct a sequence of non-degenerate approximating problems. We then establish uniform a-priori estimates and solution convergence. Furthermore, we derive an observability inequality for the regularized problems and pass to the limit. This yields a non-standard observability inequality for the original degenerate equation for sufficiently large observation time, which quantifies the influence of the degeneracy strength on the observation time. Crucially, we establish the precise parameter restriction $3\alpha + N > 4$ to ensure that every test function in $C_0^\infty(\Omega)$ belongs to $D(\mathcal A^{\frac 32})$.

\vspace{0.3cm}

\noindent {\bf {Keywords:}}  Degenerate Schr\"odinger equation; Interior degeneracy point; Weighted Sobolev space; Observability inequality
	
\vspace{0.3cm}
		
\noindent {\bf {AMS subject classifications (2020):}}~ 35J10, 35J70, 93B07, 93C20

\end{abstract}
	
\section{Introduction}\label{S1}
The controllability of partial differential equations is a central theme in mathematical control theory. It connects to inverse problems, stabilization, and quantum dynamical systems \cite{BVCS, BIUBMS,  CPADMY, Lions, ZEDET}.  A fundamental tool in the study of controllability for deterministic systems is the Hilbert Uniqueness Method (HUM), introduced by J.-L. Lions, which establishes a connection between exact controllability of a system and an observability inequality for its corresponding adjoint system. Consequently, the study of controllability can often be reduced to establishing suitable observability estimates, \cite{Lions,Yang2}. In the present paper, we only focus on establishing the observability inequality for our problem.

In the physical sciences, the observability of a system governed by a Schr\"odinger equation concerns the ability to reconstruct its full state from localized boundary measurements. However, observability heavily depends on the spatial behavior of the equation's coefficients. The most challenging case arises when the principal symbol degenerates at an interior point. While boundary degeneracies can often be localized and handled near the boundary, interior degeneracies admit no such spatial reduction. In this situation, classical observability tools fail because the vanishing principal symbol destroys ellipticity \cite{CPRFPMN} beyond the repair of standard perturbation or cut-off arguments \cite{Yang}.

To meet this challenge, we consider the following problem,
\begin{equation}\label{12.14.1}
	\begin{cases}
		\im \partial_t\varphi-\Div(|x|^\alpha \nabla \varphi)=f, &\text{in }Q, \\
		\varphi=0, &\text{on }\partial Q, \\
		\varphi(0)=\varphi^0, &\text{in }\Omega,
	\end{cases}
\end{equation}
where $\im$ is a 
pure imaginary number with $\im^2=-1$, $\alpha\in (0,2)$ is a given constant, $\Omega\subset\mathbb{R}^N$ is a bounded domain containing $0$ with $\partial\Omega\in C^3$, $Q=\Omega\times (0,T)$ with $T>0$ being a constant, $\partial Q = \partial \Omega \ts (0,T)$, $\varphi^0\in H_0^1(\Om;w)$ is the initial data, and $f\in L^2(0,T;H_0^1(\Omega;w))$. Here,  the weight Sobolev space $H_0^1(\Om;w)$ will be defined in \eqref{04.07.2}.

Assume  there exists $R>0$ such that $B_{4R}(0)\subset \Omega$, where $B_R(x_0)=\{x\in\R^N\colon |x-x_0|<R\}$ and we also denote $B_R=B_R(0)$.
Denote
\begin{equation}\label{gbz1}
	w=|x|^\alpha,\mbox{ and } \mcA \vp=-\Div(w\nabla \vp).
\end{equation}

The primary objective of this paper is to derive an observability inequality, which bounds the initial energy of the system by boundary measurements of the solution. While observability theory for non-degenerate Schrödinger and hyperbolic equations is well-established via Carleman estimates and multiplier techniques \cite{LIARTR, Machtyngier}, results for degenerate equations remain limited. In one dimension, boundary degeneracy for singular Schrödinger, wave, and parabolic systems has been widely investigated \cite{AFPCGL, CPFG, CPMPV, Fragnelli, FGDMAS, GMEB, LTLRT}, and interior degeneracy has been studied in parabolic settings \cite{CPRFPMN}. Nevertheless, the multi-dimensional Schrödinger equation with an interior degeneracy point remains completely open \cite{YDBGJZL}. Extending existing tools to higher dimensions presents severe mathematical barriers. For instance, the Carleman estimates break down near the interior singularity \cite{RLECA, Wu1}, and the classical Holmgren theorem is not directly applicable; consequently, classical unique continuation arguments needed for standard observability inequalities cannot be used. Additionally, the analysis must be conducted in weighted Sobolev spaces where smooth, compactly supported functions may fail to belong to the domains of fractional powers of the degenerate operator.

Our main result is a non-standard observability inequality that relates the initial energy of the system to boundary measurements of the solution. This inequality explicitly quantifies the influence of the degeneracy strength on the observation time. More precisely, the observability inequality holds for sufficiently large observation time, under the parameter condition $3\alpha+N>4$. These restrictions reflect the analytical difficulties caused by the interior degeneracy. These results provide an initial step toward understanding the boundary observability of multi-dimensional degenerate Schr\"odinger equations.

The paper is organized as follows. Section \ref{S2} introduces the weighted Sobolev spaces, establishes the spectral properties of the degenerate operator. It also proves well-posedness of the equation, and derives the required regularity estimates. Section \ref{S3} presents the approximation scheme. We define regularized operators, prove uniform estimates, and establish convergence of the approximating solutions. In Section \ref{S4}, we derive the observability inequality for the approximating problems. We pass to the limit, and prove the observability of the degenerate equation. Finally, Section \ref{S5} concludes the paper with future work.

The main results of this paper are Theorems \ref{04.13.T1}, \ref{04.14.T1}, and \ref{04.15.T2}. The strong convergence assertion \eqref{04.14.3} in Theorem \ref{04.14.T1} and the observability inequality in Theorem \ref{04.15.T2} are established under the parameter condition $3\alpha+N>4$.

\section{Solution spaces}\label{S2}
\subsection{Solution spaces}
In this paper, all the spaces are complex spaces. e.g., 
\begin{equation*}
	L^2(\Om)=L^2(\Om,\Cc)=\left\{u:\Om\ra\Cc\colon \int_\Om | u |^2\df x<+\iy\right\}
\end{equation*}
and its inner product is 
\begin{equation*}
	( u,v)_{L^2(\Om)}= \int_\Om u\ol v\df x, 
\end{equation*}
where $\Cc=\R+\im\R$ is the complex numbers space, $\ol u$ is the conjugate function of $u$, and $\Real u$ is the real part of the function $u$. We denote by $\Image u$ the imaginary part of the function $u$.

We define
\begin{equation*}
H^{1}(\Omega; w) = \left\{u \in L^2(\Omega) \colon  \int_\Om w\nabla u\cdot \ol{\nabla  u}\df x<+\iy\right\},
\end{equation*}
where $\nabla u=(\f{\pt u}{\pt x_1}, \cdots, \f{\pt u}{\pt x_N})$ are the distributional derivatives with respect to the spatial variables $x_1, \cdots, x_N$. The inner product on $H^1(\Omega; w)$ is
\begin{equation*}
( u, v)_{H^1(\Omega; w)} = \int_\Omega u \ol v \, \mathrm{d}x +   \int_\Omega w\nabla u\cdot\ol{ \nabla v} \, \mathrm{d}x,
\end{equation*}
and the norm is
\begin{equation*}
\|u\|_{H^1(\Omega; w)} = \left(\int_\Omega |u|^2  \, \mathrm{d}x +   \int_\Omega w\nabla u\cdot \ol{\nabla u} \, \mathrm{d}x\right)^{\frac{1}{2}}.
\end{equation*}
We define
\begin{equation}\label{04.07.2}
H_{0}^1(\Omega; w) = \text{the closure of } \mathcal{D}(\Omega,\Cc) \text{ in } H^1(\Omega; w),
\end{equation}
where $\mc{D}(\Om)=\mathcal{D}(\Omega,\Cc) = C_0^\infty(\Omega,\Cc)$ is the space of test functions.
We denote by $H^{-1}(\Omega; w)$ the dual space of $H_{0}^1(\Omega; w)$, which is a subspace of $\mathcal{D}'(\Omega)$.
 
We define
\begin{equation*}
	H^2(\Om;w)=\left\{u\in H^1(\Om;w)\colon \mcA u\in L^2(\Om)\right\}.
\end{equation*}
The inner product on $H^2(\Om;w)$ is
\begin{equation*}
(u,v)_{H^2(\Om;w)}= \int_\Om (\mcA u)\ol{(\mcA v)}\df x+ ( u,v)_{H^1(\Om;w)},
\end{equation*}
and the norm is
\begin{equation*}
	\|u\|_{H^2(\Om;w)}=\left(\int_\Om |u|^2\df x+ \int_\Om w |\nabla u|^2\df x+\int_\Om |\mcA u|^2\df x\right)^\f{1}{2}.
\end{equation*}

It is also well-known that
\begin{equation*}
	L^2(\Om), \quad H^1(\Om;w),\quad H^2(\Om;w)
\end{equation*} 
are Hilbert spaces. (See \cite{GC,Heinonen}.)

Finally, we define
\begin{equation*}
	D(\mcA)=H_0^1(\Om;w)\cap H^2(\Om;w).
\end{equation*}

 \begin{lemma}\label{08.15.L1}
Let $N \geq 2$ and $\alpha \in (0, 2)$. Then, for all $u \in H_0^1(\Omega; w)$, the following inequality holds:
\begin{equation*}
(N - 2 + \alpha)^2\int_\Om |x|^{\al-2} |u|^2\df x \leq 4\int_\Om w\nabla u\cdot \nabla \ol u\df x.
\end{equation*}
Furthermore, if $u \in H_0^1(\Omega; w)$, it follows that $u \in L^2(\Omega)$.
\end{lemma}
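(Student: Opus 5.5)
We will prove the weighted Hardy inequality first for test functions $u\in\mc D(\Om)$ and then pass to $H_0^1(\Om;w)$ by density. The argument is the standard vector-field (divergence) method. Take the field $F(x)=|x|^{\al-2}x$, whose divergence away from the origin is
\begin{equation*}
\Div F = (N-2+\al)|x|^{\al-2}.
\end{equation*}
Since $\al>0$ and $N\ge 2$, the singularity of $|x|^{\al-2}$ is integrable ($\al-2>-N$). Also $F$ is locally integrable, because $|F|=|x|^{\al-1}$. Hence the identity $\Div F=(N-2+\al)|x|^{\al-2}$ holds in $\mc D'(\R^N)$: the boundary term on a small sphere $|x|=\e$ is of order $\e^{\al-1}\e^{N-1}=\e^{N+\al-2}\to 0$.

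For $u\in\mc D(\Om)$ we integrate by parts against $|u|^2$, using $\nabla|u|^2=2\Real(\ol u\nabla u)$:
\begin{equation*}
(N-2+\al)\int_\Om |x|^{\al-2}|u|^2\df x=-2\Real\int_\Om |x|^{\al-2}\ol u\, x\cdot\nabla u\df x .
\end{equation*}
Write $|x|^{\al-2}|x|=|x|^{\f{\al}{2}-1}\,|x|^{\f{\al}{2}}$ and apply Cauchy--Schwarz. The right-hand side is then bounded by
\begin{equation*}
2\Big(\int_\Om|x|^{\al-2}|u|^2\df x\Big)^{\f12}\Big(\int_\Om |x|^\al|\nabla u|^2\df x\Big)^{\f12}.
\end{equation*}
The integral $\int|x|^{\al-2}|u|^2$ is finite for test functions. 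If it is zero there is nothing to prove; otherwise we divide by its square root and square both sides. This gives the stated inequality with the constant $4$ and $(N-2+\al)^2$, noting $w\nabla u\cdot\nabla\ol u=|x|^\al|\nabla u|^2$.

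Next we extend to $u\in H_0^1(\Om;w)$. Take $u_n\in\mc D(\Om)$ with $u_n\to u$ in $H^1(\Om;w)$. The inequality applied to $u_n-u_m$ shows that $|x|^{\f{\al}{2}-1}u_n$ is Cauchy in $L^2(\Om)$. Since $u_n\to u$ in $L^2$, a subsequence converges a.e., so the $L^2$ limit of $|x|^{\f{\al}{2}-1}u_n$ is $|x|^{\f{\al}{2}-1}u$, and the inequality passes to the limit.

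For the final assertion, since $\Om$ is bounded, $|x|\le d:=\sup_\Om|x|$ and $\al-2<0$, we have $|x|^{\al-2}\ge d^{\al-2}$. Hence
\begin{equation*}
\int_\Om|u|^2\df x\le d^{2-\al}\int_\Om|x|^{\al-2}|u|^2\df x\le \f{4d^{2-\al}}{(N-2+\al)^2}\int_\Om w|\nabla u|^2\df x .
\end{equation*}
This also gives the $L^2$ bound controlled by the weighted gradient alone, with $N-2+\al>0$.

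The main obstacle is justifying the integration by parts across the singular point $0$. It is handled either by the distributional divergence computation above or by excising the ball $B_\e$ and letting $\e\to0$, where the boundary term is $O(\e^{N+\al-2})\to0$.
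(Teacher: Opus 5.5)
Your proof is correct, and it follows a different route from the paper's. The difference is mainly that yours is self-contained.

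\textbf{What the paper does.} The paper does not prove the weighted Hardy inequality itself. It cites it for real-valued functions (Lemma 3.1 of Wu, or Proposition 2.1 of Stuart). It then obtains the complex case by writing $u=u_1+\im u_2$ and adding the two real inequalities, using $w\nabla u\cdot\nabla\ol u=w|\nabla u_1|^2+w|\nabla u_2|^2$.

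\textbf{What you do differently.} You run the divergence argument with $F=|x|^{\al-2}x$ directly on complex test functions, via $\nabla|u|^2=2\Real(\ol u\nabla u)$, so no splitting into real and imaginary parts is needed. You also make explicit two steps the paper leaves to the references. First, the integration by parts across the origin, through the $O(\e^{N+\al-2})$ boundary term. Second, the density passage to $H_0^1(\Om;w)$, using the Cauchy property in weighted $L^2$ and an a.e.\ identification of the limit. This is the same vector-field computation the paper itself carries out later for the regularized weight $\psi_\e^\al$ in Lemma \ref{08.16.L2}; there the weight is non-degenerate and the extra term over $B_\e$ has a favorable sign.

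\textbf{The final assertion.} Your constant $d^{2-\al}$, with $d=\sup_\Om|x|$, is slightly better than the paper's $M^{2-\al}$ with $M=\sup_\Om|x|+1$. Also, $u\in L^2(\Om)$ already holds by the definition of $H^1(\Om;w)$. So the real content of that part, in both versions, is the Poincar\'e bound by the weighted gradient alone.

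\textbf{What each approach buys.} The paper's approach is shorter. Yours is complete on its own and avoids the real/imaginary reduction.
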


\begin{proof}
We prove the first part. 
For the real case, the first part of the lemma can be derived from \cite[Lemma 3.1]{Wu} or \cite[Proposition 2.1 (1)]{Stuart}. For the complex case, note that for each $u\in H_0^1(\Om;w)$ we have $u=u_1+\im u_2$ ($u_1, u_2$ are real functions), we get
\begin{equation*}
	\begin{split}
		(N-2+\al)^2\int_\Om |x|^{\al-2}|u|^2\df x
		&=(N-2+\al)^2\int_\Om |x|^{\al-2}\left(u_1^2+u_2^2\right)\df x\\
		&\leq 4\int_\Om w\nabla u_1\cdot \nabla u_1\df x+4\int_\Om w\nabla u_2\cdot \nabla u_2\df x\\
		&=4\int_\Om w\nabla u\cdot \nabla \ol u\df x. 
	\end{split}
\end{equation*}

For the proof of the second part, from Lemma \ref{08.15.L1} and $\alpha \in (0, 2)$, we obtain 
\begin{equation}\label{08.15.10}
\frac{N - 2 + \alpha}{2M^{1 - \frac{\alpha}{2}}} \|u\|_{L^2(\Omega)} \leq \left(\int_\Om w\nabla u\cdot \nabla\ol u\df x\right)^\f{1}{2}, \mbox{ with } M\equiv \sup_{x\in\Om}|x|+1,
\end{equation}
which is the Poincar\'{e} inequality.
\end{proof}

\begin{remark}\label{08.16.R1}
From Lemma \ref{08.15.L1},  the norm
\begin{equation}\label{08.19.2}
\|u\|_{H_0^1(\Omega; w)} = \left(\int_\Omega w\nabla u \cdot \nabla \ol u \, \mathrm{d}x\right)^{\frac{1}{2}}
\end{equation}
is an equivalent norm in $H_0^1(\Omega; w)$. Hereafter, we use \eqref{08.19.2} to define the norm of $H_0^1(\Omega; w)$.
\end{remark}

The following Lemma \ref{08.15.L4} is \cite[Theorem 3.4]{Wu} or  \cite[Proposition 2.1 (5)]{Stuart}.
\begin{lemma}\label{08.15.L4}
The embedding $H_0^1(\Omega; w) \hookrightarrow L^2(\Omega)$ is compact.
\end{lemma}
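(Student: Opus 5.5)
The plan is to use a weighted Rellich--Kondrachov argument. I would split $\Omega$ into a small ball around the degeneracy point, where the Hardy inequality of Lemma \ref{08.15.L1} makes the $L^2$ mass uniformly small, and the remaining region, where the weight is bounded below and the classical compact embedding applies.

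\textbf{Setup.} Let $(u_n)$ be bounded in $H_0^1(\Omega;w)$, say $\int_\Omega w|\nabla u_n|^2\,\mathrm dx\le C$. By reflexivity we may assume $u_n\rightharpoonup u$ weakly in $H_0^1(\Omega;w)$. The embedding into $L^2(\Omega)$ is continuous by \eqref{08.15.10}, so $u_n\rightharpoonup u$ weakly in $L^2(\Omega)$ as well. It then suffices to show that $(u_n)$ is Cauchy in $L^2(\Omega)$.

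\textbf{Step 1: small mass near the origin.} Fix $\delta\in(0,R)$. On $B_\delta$ we have $|x|^{\alpha-2}\ge \delta^{\alpha-2}$, since $\alpha<2$. Lemma \ref{08.15.L1} then gives
\begin{equation*}
\int_{B_\delta}|u_n|^2\,\mathrm dx\le \delta^{2-\alpha}\int_\Omega |x|^{\alpha-2}|u_n|^2\,\mathrm dx\le \frac{4C\,\delta^{2-\alpha}}{(N-2+\alpha)^2}.
\end{equation*}
This bound is uniform in $n$ and tends to $0$ as $\delta\to0$. Here we use $N\ge2$ and $\alpha>0$, so that $N-2+\alpha>0$.

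\textbf{Step 2: compactness away from the origin.} On $\Omega_\delta=\Omega\setminus \overline{B_{\delta/2}}$ the weight satisfies $w\ge (\delta/2)^\alpha$. Hence $\int_{\Omega_\delta}|\nabla u_n|^2\,\mathrm dx\le C(\delta/2)^{-\alpha}$, and together with the $L^2$ bound this makes $(u_n)$ bounded in $H^1(\Omega_\delta)$.

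To avoid needing regularity of the inner boundary, I would take a cutoff $\eta\in C^\infty(\mathbb R^N)$ with $\eta=0$ on $B_{\delta/2}$ and $\eta=1$ outside $B_\delta$. Since $u_n\in H_0^1(\Omega;w)$ is a limit of test functions, $\eta u_n$ is bounded in the unweighted space $H_0^1(\Omega)$. Its gradient is $\eta\nabla u_n+u_n\nabla\eta$, and on $\operatorname{supp}\eta$ the weight is bounded below. The classical Rellich theorem on the bounded domain $\Omega$ then yields a subsequence along which $\eta u_n$ converges strongly in $L^2(\Omega)$.

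\textbf{Step 3: diagonal argument.} Take $\delta_k=1/k$ and extract subsequences successively, then pass to the diagonal subsequence. For any $\varepsilon>0$, choose $k$ so that the Step 1 bound for $\delta_k$ is below $\varepsilon$. Then
\begin{equation*}
\|u_n-u_m\|_{L^2(\Omega)}\le \|\eta_k(u_n-u_m)\|_{L^2(\Omega)}+\|(1-\eta_k)(u_n-u_m)\|_{L^2(B_{\delta_k})},
\end{equation*}
and the last term is at most $2\sqrt{\varepsilon}$. So the diagonal subsequence is Cauchy in $L^2(\Omega)$, and its limit is $u$ by uniqueness of weak limits.

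\textbf{Main obstacle.} The key point is Step 1: controlling the $L^2$ mass uniformly near the interior degeneracy point. This is exactly what the Hardy-type inequality of Lemma \ref{08.15.L1} supplies, and it is where the strict inequality $\alpha<2$ enters, through the factor $\delta^{2-\alpha}$. Step 2 needs some care to justify that $\eta u_n\in H_0^1(\Omega)$. I would do this by approximating $u_n$ with test functions $\varphi_j\to u_n$ in $H^1(\Omega;w)$ and noting that $\eta\varphi_j$ is Cauchy in the unweighted $H^1(\Omega)$.
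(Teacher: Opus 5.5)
Your proof is correct, but it takes a different route from the paper. The paper gives no argument of its own and simply defers to Theorem 3.4 of Wu (equivalently Proposition 2.1(5) of Stuart). You instead derive compactness from ingredients the paper already has. The Hardy inequality of Lemma \ref{08.15.L1} gives the uniform small-mass bound $\int_{B_\delta}|u_n|^2\,dx\le 4C\delta^{2-\alpha}/(N-2+\alpha)^2$ near the degeneracy point. The cutoff $\eta u_n$ is bounded in the unweighted $H_0^1(\Omega)$, where the classical Rellich theorem applies without any regularity of an inner boundary. The citation is shorter. Your argument is self-contained and shows exactly which hypotheses are used, namely $N-2+\alpha>0$ and $\alpha<2$. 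The latter is sharp: for $\alpha=2$ the rescalings $\lambda^{N/2}\phi(\lambda x)$ preserve both the $L^2(\Omega)$ norm and the weighted gradient norm while concentrating at $0$, so compactness fails there.

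Two small points. First, you should take $0\le\eta\le1$, so that $\|(1-\eta_k)(u_n-u_m)\|_{L^2(\Omega)}\le\|u_n-u_m\|_{L^2(B_{\delta_k})}$. Second, the diagonal extraction can be dropped. The map $v\mapsto\eta v$ is bounded and linear from $H_0^1(\Omega;w)$ into $H_0^1(\Omega)$, so $\eta u_n\rightharpoonup\eta u$ there. Rellich then gives $\eta u_n\to\eta u$ in $L^2(\Omega)$ along the whole weakly convergent sequence, for every fixed $\delta$.
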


\begin{proof}
	This is Theorem 3.4 in \cite{Wu}. 
\end{proof}

\subsection{Spectrum}
\begin{notation}\label{04.07.N1}
	Define the sesquilinear form
	\begin{equation}\label{09.15.5}
		a(u,v)
		=
		\int_\Omega \nabla u\cdot w \nabla\overline{v}\,dx,
		\quad
		u,v\in H_0^1(\Omega;w).
	\end{equation}
	The form $a$ is densely defined, Hermitian, positive, closed, and coercive in $L^2(\Omega)$, with form domain
	\[
	D(a)=H_0^1(\Omega;w).
	\]
	Hence, by the representation theorem for closed sesquilinear forms, there exists a unique positive self-adjoint operator $\mathcal A$ in $L^2(\Omega)$ associated with $a$, characterized by
	\[
	a(u,v)=(\mathcal A u,v)_{L^2(\Omega)},
	\qquad
	u\in D(\mathcal A),\quad v\in H_0^1(\Omega;w).
	\]
By the density of $C_0^\infty(\Omega)$ in $H_0^1(\Omega;w)$ and the definition of distributional divergence, the form-associated operator coincides with the previously defined differential operator $\mathcal A$, with the same domain. Moreover, since the embedding
	\[
	H_0^1(\Omega;w)\hookrightarrow L^2(\Omega)
	\]
is compact by Lemma \ref{08.15.L4}, the resolvent of $\mathcal A$ is compact. Consequently, $\mathcal A$ has a discrete spectrum
	\begin{equation*}
		0<\la_1\leq \la_2\leq \cdots \ra +\iy.
	\end{equation*}
	In particular, $\la_n\ (n\in\N^*)$ and the corresponding eigenfunctions $\Phi_n$ satisfy
	\begin{equation}\label{04.07.1}
		\begin{cases}
			\mcA \Phi_n=\la_n \Phi_n, &\mbox{in }\Om,\\
			\Phi_n=0, &\mbox{on }\pt\Om.
		\end{cases}
	\end{equation}
	The eigenfunctions $\{\Phi_n\}_{n\in\N^*}$ can be chosen to form a real-valued orthonormal basis of $L^2(\Omega)$ and an orthogonal basis of $H_0^1(\Omega;w)$. It should be noted that
	\begin{equation}\label{04.10.7}
		\la_1=\inf_{0\neq u\in H_0^1(\Om;w)}
		\f{\int_\Om w\nabla u\cdot \ol{\nabla u}\df x}
		{\int_\Om |u|^2\df x}
		\geq
		\left(\f{N-2+\al}{2M^{1-\f{\al}{2}}}\right)^2
	\end{equation}
	by \eqref{08.15.10}.
\end{notation}

\begin{definition}\label{04.07.D2}
Let $s\geq 0$. We denote 
\begin{equation*}
	D(\mcA^s)=\left\{u\in L^2(\Om)\colon \sum_{n=1}^\iy \la_n^{2s}|u_n|^2< \iy \mbox{ for } u_n = (u,\Phi_n)_{L^2(\Omega)}, n \in \mathbb N^* \right\}. 
\end{equation*}
i.e., $u\in D(\mcA^s)$ if and only if $(\la_n^su_n)_{n\in\N^*}\in \ell^2(\Cc)$.  Its inner product and norm are defined by 
\begin{equation*}
(u,v)_{D(\mathcal A^s)}= 	\sum_{n=1}^{\infty} \lambda_n^{2s} u_n \bar{v}_n, \quad \|u\|_{D(\mathcal A^s)} = \left(\sum_{n=1}^\iy \la_n^{2s}|u_n|^2\right)^{\frac 12},
\end{equation*}
where $u_n = (u, \Phi_n)_{L^2(\Omega)}$, $v_n = (v, \Phi_n)_{L^2(\Omega)}$.

For $s\in [-1,0]$, $D(\mathcal A^s)$ denotes the dual space of
$D(\mathcal A^{-s})$, that is,
\begin{equation*}
	D(\mathcal A^s):=\left(D(\mathcal A^{-s})\right)'.
\end{equation*}
For $u\in D(\mathcal A^s)$, let
\begin{equation*}
u_n	=	\langle u,\Phi_n\rangle_{D(\mathcal A^s),D(\mathcal A^{-s})},	\qquad n\in\mathbb N^*,
\end{equation*}
where
$\langle\cdot,\cdot\rangle_{D(\mathcal A^s),D(\mathcal A^{-s})}$
denotes the duality pairing between $D(\mathcal A^s)$ and
$D(\mathcal A^{-s})$.
Then the norm of $u$ in $D(\mathcal A^s)$ is defined by
\begin{equation*}
	\|u\|_{D(\mathcal A^s)}
	=
	\left(
	\sum_{n=1}^{\iy}
	\lambda_n^{2s}|u_n|^2
	\right)^{\frac12}.
\end{equation*}
\end{definition}

\begin{lemma}\label{06.28.L1}
	Let $u=\sum_{i=1}^\iy u_i \Phi_i\in H_0^1(\Om;w)$ with $u_i=(u,\Phi_i)_{L^2(\Om)}$ for all $i\in\N^*$. We have 
	
	(i) $\nabla u=\sum_{i=1}^\iy u_i\nabla \Phi_i$ and $\|u\|_{H_0^1(\Om;w)}=(\sum_{i=1}^\iy |u_i|^2\la_i)^\f{1}{2}$, and
	
	(ii) $
		u\in H^2(\Om;w)\Lra \sum_{i=1}^\iy |u_i|^2\la_i^2<\iy$, 
	and
	\begin{equation*}
		\mcA u=\sum_{i=1}^\iy u_i\la_i\Phi_i \mbox{ and } \|\mcA u\|_{L^2(\Om)}=\left(\sum_{i=1}^\iy |u_i|^2\la_i^2\right)^\f{1}{2}.
	\end{equation*}
\end{lemma}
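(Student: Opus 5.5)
Part (i) follows from the Hilbert-space structure of $H_0^1(\Om;w)$ with the inner product $a(\cdot,\cdot)$ of \eqref{09.15.5}. By Notation \ref{04.07.N1}, each eigenfunction satisfies $\Phi_i\in D(\mcA)$ and $a(\Phi_i,\Phi_j)=(\mcA\Phi_i,\Phi_j)_{L^2(\Om)}=\la_i\delta_{ij}$. Hence $\{\Phi_i/\sqrt{\la_i}\}$ is an orthonormal system in $H_0^1(\Om;w)$.

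\emph{Completeness.} Suppose $v\in H_0^1(\Om;w)$ satisfies $a(v,\Phi_i)=0$ for all $i$. Since $a(v,\Phi_i)=\overline{a(\Phi_i,v)}=\la_i\overline{(\Phi_i,v)}_{L^2}$, every $L^2$-coefficient of $v$ vanishes. Because $\{\Phi_i\}$ is an orthonormal basis of $L^2(\Om)$, this gives $v=0$.

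\emph{Expansion of $u$.} For $u\in H_0^1(\Om;w)$, its $H_0^1$-Fourier coefficient with respect to $\Phi_i/\sqrt{\la_i}$ is
\[
a(u,\Phi_i)/\sqrt{\la_i}=\la_i u_i/\sqrt{\la_i}=\sqrt{\la_i}\,u_i,
\]
using $a(u,\Phi_i)=\overline{(\mcA\Phi_i,u)}=\la_i(u,\Phi_i)$ and the fact that $\Phi_i$ is real. So $u=\sum u_i\Phi_i$ with convergence in $H_0^1(\Om;w)$, and Parseval gives $\|u\|^2_{H_0^1(\Om;w)}=\sum\la_i|u_i|^2$.

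\emph{Gradient formula.} Convergence in $H_0^1(\Om;w)$ means $\sqrt{w}\,\nabla S_n\to\sqrt{w}\,\nabla u$ in $L^2$, where $S_n$ are the partial sums. Since $w>0$ a.e., passing to a subsequence gives a.e. convergence of $\nabla S_n$. Therefore $\nabla u=\sum u_i\nabla\Phi_i$, understood in the weighted $L^2$ sense.

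Part (ii) uses the form characterization of $\mcA$. Suppose $u\in D(\mcA)$, i.e. $u\in H_0^1(\Om;w)$ with $\mcA u\in L^2$; the paper identifies this with $H^2(\Om;w)$ inside $H_0^1$. Then the coefficients of $\mcA u$ are
\[
(\mcA u,\Phi_i)=a(u,\Phi_i)=\la_i u_i.
\]
Expanding $\mcA u$ in the basis gives $\mcA u=\sum\la_i u_i\Phi_i$ and $\|\mcA u\|^2_{L^2}=\sum\la_i^2|u_i|^2<\iy$.

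Conversely, assume $\sum\la_i^2|u_i|^2<\iy$ and set $g=\sum\la_i u_i\Phi_i\in L^2$. For any $v\in H_0^1(\Om;w)$, the $H_0^1$-convergence from (i) gives
\[
a(u,v)=\sum u_i a(\Phi_i,v)=\sum u_i\la_i(\Phi_i,v)=(g,v).
\]
By the representation theorem in Notation \ref{04.07.N1}, $u\in D(\mcA)$ and $\mcA u=g$. In distributional terms, testing against $v\in C_0^\infty$ shows $-\Div(w\nabla u)=g\in L^2$, so $u\in H^2(\Om;w)$.

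The main obstacle is the interpretation of $H^2(\Om;w)$. If the distributional $\mcA u$ is meant, the equivalence holds for $u\in H_0^1(\Om;w)$. The reverse implication, that distributional $\mcA u\in L^2$ implies the form identity for all $v\in H_0^1(\Om;w)$, follows by extending the identity $(\mcA u,v)=a(u,v)$ from $C_0^\infty(\Om)$ to $H_0^1(\Om;w)$. This extension uses density and continuity of both sides in $v$ for the $H_0^1(\Om;w)$-norm, which holds by Cauchy–Schwarz and Lemma \ref{08.15.L1}.
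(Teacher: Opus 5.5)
Your proof is correct and follows essentially the same route as the paper: you use the orthonormal basis $\{\lambda_i^{-1/2}\Phi_i\}$ of $H_0^1(\Omega;w)$, with completeness inherited from $L^2(\Omega)$, and for (ii) the key identity $(\mathcal A u,\Phi_i)_{L^2(\Omega)}=a(u,\Phi_i)=\lambda_i u_i$ together with testing the form against arbitrary $v$. The only differences are cosmetic: you get $\|\mathcal A u\|_{L^2(\Omega)}^2=\sum_i\lambda_i^2|u_i|^2$ directly by Parseval where the paper uses partial sums and Cauchy--Schwarz, and you state explicitly the density argument extending $(\mathcal A u,v)_{L^2(\Omega)}=a(u,v)$ from $C_0^\infty(\Omega)$ to $H_0^1(\Omega;w)$, which the paper uses tacitly.
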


\begin{proof}
We prove this lemma by the following steps. 

{\it Step 1}. We prove (i). 

By \eqref{04.07.1}, for any $j,k\in\N^*$, we have
\begin{equation*}
(\Phi_j,\Phi_k)_{H_0^1(\Omega;w)}	=a(\Phi_j,\Phi_k) =\lambda_j(\Phi_j,\Phi_k)_{L^2(\Omega)}	=\lambda_j\delta_{jk}.
\end{equation*}
Therefore,
\begin{equation*}
\left(	\lambda_j^{-\frac12}\Phi_j, \lambda_k^{-\frac12}\Phi_k	\right)_{H_0^1(\Omega;w)}	=\delta_{jk},
\end{equation*}
and hence $\{\lambda_k^{-\frac12}\Phi_k\}_{k=1}^{\infty}$ is an orthonormal system in $H_0^1(\Omega;w)$.

It remains to prove its completeness. Let $u\in H_0^1(\Omega;w)$ satisfy
\begin{equation*}
\left(	u,\lambda_k^{-\frac12}\Phi_k \right)_{H_0^1(\Omega;w)}=0, \qquad k\in\N^*.
\end{equation*}
Since $\lambda_k>0$, it follows that
\begin{equation*}
	0=(u,\Phi_k)_{H_0^1(\Omega;w)} =a(u,\Phi_k).
\end{equation*}

By Hermitian symmetry, the definition of the operator $\mathcal A$ associated with the sesquilinear form $a$, and \eqref{04.07.1}, we have
\begin{equation*}
a(u,\Phi_k) =(u,\mathcal A\Phi_k)_{L^2(\Omega)} =\lambda_k(u,\Phi_k)_{L^2(\Omega)}.
\end{equation*}
	Thus
\begin{equation*}
(u,\Phi_k)_{L^2(\Omega)}=0,	\qquad k\in\N^*.
\end{equation*}
Since $\{\Phi_k\}_{k=1}^{\infty}$ is an orthonormal basis of the complex Hilbert space $L^2(\Omega)$, we conclude that $u=0$. Hence $\{\lambda_k^{-\frac12}\Phi_k\}_{k=1}^{\infty}$ is an orthonormal basis of $H_0^1(\Omega;w)$.
	
From above, since $u\in H_0^1(\Om;w)$,  we have 
	\begin{equation*}
		\nabla u=\sum_{i=1}^\iy e_i\la_i^{-\f{1}{2}}\nabla\Phi_i, \mbox{ and } \|u\|_{H_0^1(\Om;w)}^2=\sum_{i=1}^\iy |e_i|^2,
	\end{equation*}
	and 
	\begin{equation*}
		e_i=\int_\Om w \nabla u\cdot \la_i^{-\f{1}{2}}\nabla\Phi_i \df x=\la_i^{-\f{1}{2}}\int_\Om w\nabla u\cdot \nabla \Phi_i \df x=\la_i^\f{1}{2}\int_\Om u\Phi_i\df x=\la_i^\f{1}{2}u_i, 
	\end{equation*} 
	hence,  $\nabla u=\sum_{i=1}^\iy u_i\nabla\Phi_i$. Moreover, $\|u\|_{H_0^1(\Om;w)}=(\sum_{i=1}^\iy \la_i|u_i|^2)^\f{1}{2}$.

	{\it Step 2}.  We prove (ii).

Let $u\in H^2(\Om;w)$. Take $\vp^n=\sum_{i=1}^n u_i\Phi_i\in H_0^1(\Om;w)\cap H^2(\Om;w)$ for each $n\in\N^*$, then from
\begin{equation*}
	\begin{split}
		( \mcA u, \mcA\vp^n)_{L^2(\Om)}
		&= \sum_{i=1}^n \ol u_i\la_i \int_\Om (\mcA u)\Phi_i\df x= \sum_{i=1}^n \ol u_i\la_i\int_\Om u\mcA\Phi_i\df x\\
		&= \sum_{i=1}^n \ol u_i\la_i^2\int_\Om u\Phi_i\df x=\sum_{i=1}^n |u_i|^2\la_i^2
	\end{split}
\end{equation*}
and $\|\mcA\vp^n\|_{L^2(\Om)}^2=\sum_{i=1}^n |u_i|^2\la_i^2$ we obtain
\begin{equation*}
	\sum_{i=1}^n|u_i|^2\la_i^2\leq \|\mcA u\|_{L^2(\Om)}^2
\end{equation*}
for all $n\in\N^*$ by the Cauchy inequality. This implies that $\sum_{i=1}^\iy |u_i|^2\la_i^2\leq \|\mcA u\|_{L^2(\Om)}^2<\iy$.

Let $\sum_{i=1}^\iy |u_i|^2\la_i^2<\iy$. For each  $\vp\in C_0^\iy(\Om)$,  we have
\begin{equation*}
	\begin{split}
		( \mcA u, \vp)_{L^2(\Om)}
		&= \int_\Om w\nabla u\cdot \ol{\nabla \vp}\df x\\
		&= \sum_{i=1}^\iy u_i \int_\Om w\nabla\Phi_i\cdot \ol{\nabla\vp}\df x= \sum_{i=1}^\iy u_i\la_i\int_\Om \Phi_i\ol \vp\df x.
	\end{split}
\end{equation*}
Note that
\begin{equation*}
	\int_\Om \left|\sum_{i=1}^n u_i\la_i\Phi_i\right|^2\df x=\sum_{i=1}^n |u_i|^2\la_i^2\leq \sum_{i=1}^\iy |u_i|^2\la_i^2<\iy \mbox{ for all } n\in\N^*,
\end{equation*}
i.e., $\sum_{i=1}^\iy u_i\la_i\Phi_i\in L^2(\Om)$, hence
\begin{equation*}
		(\mcA u, \vp)_{L^2(\Om)}=\left(\sum_{i=1}^\iy u_i\la_i\Phi_i, \vp\right)_{L^2(\Om)}.
	\end{equation*}
	This implies that $\|\mcA u\|^2_{L^2(\Om)}\leq \sum_{i=1}^\iy |u_i|^2\la_i^2$ and $\mcA u=\sum_{i=1}^\iy u_i\la_i\Phi_i$. This completes the proof of  the lemma. 
\end{proof}

\begin{corollary}\label{04.09.C1}
	Let $s\in [-1,+\iy)$. Then the operator 
	\begin{equation*}
		\mcA: D(\mcA^{s+1})\ra D(\mcA^s)
	\end{equation*}
	is an isometric linear isomorphism. Moreover, if $u=\sum_{n=1}^\iy u_n\Phi_n\in D(\mcA^{s+1})$, then 
	\begin{equation*}
		\mcA u=\sum_{n=1}^\iy \la_n u_n\Phi_n. 
	\end{equation*}
\end{corollary}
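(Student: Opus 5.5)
The plan is to work entirely at the level of Fourier coefficients with respect to the eigenbasis $\{\Phi_n\}_{n\in\N^*}$. By Definition \ref{04.07.D2}, for every $s\in[-1,+\iy)$ the space $D(\mcA^s)$ is isometrically identified, via $u\mapsto (u_n)_{n\in\N^*}$, with the weighted sequence space $\ell^2_s=\{(c_n)\colon \sum_n\la_n^{2s}|c_n|^2<\iy\}$. For $s\ge0$ the coefficient is $u_n=(u,\Phi_n)_{L^2(\Om)}$. For $s\in[-1,0)$ it is the duality pairing $u_n=\langle u,\Phi_n\rangle$, which makes sense because $\Phi_n\in D(\mcA^{r})$ for all $r\ge0$. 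I will first justify this identification for negative $s$. By Riesz representation in $D(\mcA^{-s})$, whose orthonormal basis is $\{\la_n^{s}\Phi_n\}$, a functional $u$ is determined by its values $u_n$ on the $\Phi_n$. Its dual norm then equals $(\sum_n\la_n^{2s}|u_n|^2)^{1/2}$, and conversely every such sequence defines a bounded functional. With this in hand, $D(\mcA^s)$ is the completion of finite sums $\sum c_n\Phi_n$ under the $\ell^2_s$ norm, and we may write $u=\sum_n u_n\Phi_n$ with convergence in $D(\mcA^s)$.

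Next I will define the candidate map on coefficients, $T\colon (u_n)\mapsto(\la_nu_n)$. Since $\sum_n\la_n^{2s}|\la_nu_n|^2=\sum_n\la_n^{2(s+1)}|u_n|^2$, the map $T$ is an isometry from $\ell^2_{s+1}$ onto $\ell^2_s$. It is surjective because $\la_n>0$ by \eqref{04.10.7}: given $(v_n)\in\ell^2_s$, the preimage is $(v_n/\la_n)$. Hence the map $u\mapsto\sum_n\la_nu_n\Phi_n$ is an isometric linear isomorphism $D(\mcA^{s+1})\to D(\mcA^s)$.

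The remaining, and main, point is consistency: I must check that this map is really $\mcA$, in the sense used elsewhere in the paper, and not merely an abstract operator. I will split into cases.

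\textbf{Case $s\ge0$.} Then $D(\mcA^{s+1})\subset D(\mcA)$, and by Lemma \ref{06.28.L1}(i) we have $D(\mcA^{1/2})=H_0^1(\Om;w)$. Lemma \ref{06.28.L1}(ii) shows that $D(\mcA)=H_0^1(\Om;w)\cap H^2(\Om;w)$ and gives $\mcA u=\sum_n\la_nu_n\Phi_n$ directly.

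\textbf{Case $s\in[-1,0)$.} Here $\mcA$ must be understood as the extension by duality. For $s\in[-1,-\tfrac12]$, the element $u\in D(\mcA^{s+1})$ lies between $L^2$ and $H_0^1(\Om;w)$, and we define $\langle\mcA u,v\rangle=\sum_n\la_nu_n\ol{v_n}$ for $v\in D(\mcA^{-s})$. When $u\in H_0^1(\Om;w)$ this equals $a(u,v)$, by Lemma \ref{06.28.L1}(i) applied to the expansion of $\nabla u$ and $\nabla v$. So it agrees with the weak form of $-\Div(w\nabla u)$ tested against smooth enough $v$. I will verify this on finite sums $\sum c_n\Phi_n$, where it is immediate from \eqref{04.07.1}. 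I will then extend by density, using continuity in the $\ell^2$ norms together with the fact that finite sums are dense in each $D(\mcA^r)$. Uniqueness of the continuous extension then identifies the operator with the coefficient formula $\mcA u=\sum_n\la_nu_n\Phi_n$.

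I expect the identification of duals for negative $s$, that is, the pairing convention and the density of finite sums, to be the only delicate step. The rest is a one-line computation.
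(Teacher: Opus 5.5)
Your proposal is correct and is essentially the argument the paper has in mind. The paper's proof is the single line that the corollary follows from Lemma \ref{06.28.L1} and Definition \ref{04.07.D2}: $\mcA$ acts as the isometric coefficient map $(u_n)\mapsto(\la_n u_n)$ from the weighted $\ell^2$ space of order $s+1$ onto that of order $s$, and Lemma \ref{06.28.L1} identifies this map with $-\Div(w\nabla\,\cdot)$. Your Riesz-representation description of $D(\mcA^s)$ for $s\in[-1,0)$ and your density and consistency check make explicit what the paper leaves implicit. The subrange $s\in(-\tfrac12,0)$, which you did not single out, is handled the same way, since there $D(\mcA^{s+1})\subset H_0^1(\Om;w)$ and $\langle\mcA u,v\rangle=a(u,v)$ for $v\in H_0^1(\Om;w)$.
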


\begin{proof}
	This is a conclusion from Lemma \ref{06.28.L1} and Definition \ref{04.07.D2}. 
\end{proof}

\begin{remark}\label{04.09.R1}
	(1) Some similar conclusions have been considered by Definitions 2.18 and 2.19 in \cite{Yang}, and Remarks 2.20 and 2.21, and Lemma 2.22. 
	
	(2) If   the operator $\mcA$ is the Laplace $-\De$, then $C_0^\iy(\Om)\s D(\mcA^{\iy})$. But this does not hold anymore since there is a term $x^\be \nabla \vp$ in $\mcA^n\vp$ with $|\be|=n(\al-2)+1$ for $n\in\N^*$. However, for $N\geq 2$, we also have $C_0^\iy(\Om)\s D(\mcA)$. 	If $\al\in (0,\f{2}{3})$ and $N=2$, then $C_0^\iy(\Om)\not\subset D(\mcA^\f{3}{2})$.  This highlights a fundamental difference between degenerate and non-degenerate partial differential operators.
	
	(3) From Definition \ref{04.07.D2} and Lemma \ref{06.28.L1} and Corollary \ref{04.09.C1}, we get $D(\mcA^\f{1}{2})=H_0^1(\Om;w)$ and $D(\mcA^{-\f{1}{2}})=H^{-1}(\Om;w)$. 
\end{remark}

\subsection{Weak solution}

\begin{definition}\label{04.07.D1}
Let $\vp^0\in H_0^1(\Om;w)$ and $f\in L^2(0,T; H_0^1(\Om;w))$. We call 
\begin{equation*}
	\begin{split}
		\vp\in L^2(0,T; H_0^1(\Om;w))\cap H^1(0,T;H^{-1}(\Om;w))
	\end{split}
\end{equation*}
a weak solution to the equation \eqref{12.14.1} with respect to $(\vp^0,f)$, provided
	
	(i) for all $v\in H_0^1(\Om;w)$, and for a.e.~$t\in [0,T]$, we have
	\begin{equation*}
		(\im \pt_t\vp, v)_{H^{-1}(\Om;w), H_0^1(\Om;w)}+(\vp, v)_{H_0^1(\Om;w)}=(f, v)_{L^2(\Om)}, 
	\end{equation*}
	
	(ii) $\vp(0)=\vp^0$. 
\end{definition}

\begin{remark}
	We note that the dual pair $( \im \pt_t\vp, v)_{H^{-1}(\Om;w), H_0^1(\Om;w)}$ in Definition \ref{04.07.D1} (i) is complex dual pair, rather than a real dual pair. 
\end{remark}

\begin{lemma}\label{11.30.L2}
	Let $(\mcM,g)$ be a $C^m$-Riemannian manifold with compact boundary $\pt \mcM$. Then there exists a $C^{m-1}$-vector field $\bf{n}$ such that 
	\begin{equation*}
		\bf{n}(x)=\nu(x), \ x\in\pt \mcM, \mbox{ and } |\bf{n}(x)|\leq 1, \ x\in \mcM, 
	\end{equation*}
	where $\nu$ is the unit outward normal vector to $\pt \mcM$. 
\end{lemma}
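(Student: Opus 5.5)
We build $\textbf{n}$ by extending the normal off the boundary through a collar, then cutting it off with a bump function. Since $\mathcal M$ is a $C^m$ manifold, the unit outward normal $\nu$ along the compact boundary $\pt\mcM$ is a $C^{m-1}$ vector field on $\pt\mcM$: it is determined by the $C^{m-1}$ metric coefficients $g_{ij}$ and the $C^{m-1}$ tangent spaces of $\pt\mcM$, by orthonormalizing against $T\pt\mcM$ and normalizing.

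\emph{Step 1 (collar).} Use the inward normal exponential map, or more simply boundary charts, to produce a collar $\Psi:\pt\mcM\times[0,\de)\ra \mcM$. The map $\Psi$ should be a $C^{m-1}$ diffeomorphism onto a neighbourhood $U$ of $\pt\mcM$, with $\Psi(p,0)=p$. Compactness of $\pt\mcM$ gives a uniform $\de>0$. If the exponential map loses regularity (geodesics of a $C^{m-1}$ metric are only $C^{m-2}$-dependent), I would instead build $\Psi$ from a finite atlas of $C^m$ boundary charts with a partition of unity. Concretely: take any $C^{m-1}$ inward-pointing transversal field and flow along it. Only a $C^{m-1}$ tubular structure is needed. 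This step is the main technical point.

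\emph{Step 2 (extension).} On $U$, define $V(\Psi(p,s))=P_s\nu(p)$ for some $C^{m-1}$ transport of $\nu$ along the collar lines. Simplest is to extend $\nu$ constantly in the collar coordinates, i.e.\ push forward $\nu(p)$ by $d\Psi$ after identifying $T_{(p,s)}$ with $T_{(p,0)}$. This $V$ is $C^{m-1}$ and nonvanishing near $\pt\mcM$. Shrinking $\de$, we may assume $|V|_g\geq \frac12$ on $U$. Then set $\wt V=V/|V|_g$, which is still $C^{m-1}$, has $|\wt V|_g=1$ on $U$, and satisfies $\wt V=\nu$ on $\pt\mcM$.

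\emph{Step 3 (cutoff).} Choose a $C^\iy$ function $\chi:[0,\iy)\ra[0,1]$ with $\chi=1$ near $0$ and $\chi=0$ on $[\de/2,\iy)$. Define $\textbf{n}(\Psi(p,s))=\chi(s)\wt V(\Psi(p,s))$ on $U$ and $\textbf{n}=0$ off $U$. Since $s\circ\Psi^{-1}$ is $C^{m-1}$ and the cutoff vanishes near $\partial U\setminus\pt\mcM$, the field $\textbf{n}$ is globally $C^{m-1}$. It equals $\nu$ on $\pt\mcM$ and satisfies $|\textbf{n}|\le\chi\le1$ everywhere.

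The only delicate point is keeping $C^{m-1}$ regularity in Step 1. For the application in this paper, $\mcM=\ol\Om\subset\R^N$ with the Euclidean metric and $\pt\Om\in C^3$. There one can avoid geodesics altogether: take the signed distance function $d$, which is $C^3$ near $\pt\Om$ for a $C^3$ boundary, and set $\textbf{n}=\chi(d)\nabla d$. This field is $C^2$, equals $\nu$ on $\pt\Om$, and has modulus at most $1$.
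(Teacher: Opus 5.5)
The paper states this lemma without proof (it is quoted as a standard fact), so your argument has to stand on its own. Your final, Euclidean construction is correct, and it is all the paper actually needs. The lemma is used only in the hidden-regularity step of the well-posedness theorem, where $\mcM=\ol\Om\subset\R^N$, $\pt\Om\in C^3$, and a $C^2$ field is required. By Gilbarg--Trudinger (Lemma 14.16), the distance $d$ to $\pt\Om$ is $C^3$ on $\{x\in\ol\Om\colon d(x)<\mu\}$ for some $\mu>0$. So $\textbf{n}=-\chi(d)\nabla d$, with $\chi=1$ near $0$ and $\supp\chi\subset[0,\mu)$, does the job. Mind the sign (with $d\geq0$ in $\Om$, $\nabla d=-\nu$ on $\pt\Om$), and keep $\chi$ inside the band where $d$ is $C^3$.

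The general argument, however, loses a derivative, and the loss occurs in Step 2, not Step 1. In your set-up $\Psi(p,s)=\Phi_s(p)$ is the flow of a $C^{m-1}$ field $X$, hence only $C^{m-1}$, so $d\Psi$ is only $C^{m-2}$. Write $\nu(p)=a(p)+b(p)X(p)$ with $a(p)\in T_p\pt\mcM$. Your ``constant in collar coordinates'' extension is then $V(\Phi_s(p))=d(\Phi_s)_p\,a(p)+b(p)X(\Phi_s(p))$. The second term is $C^{m-1}$. The first involves the spatial derivative of the flow, which is only $C^{m-2}$ in $(p,s)$; after composing with $\Psi^{-1}$, all you can conclude is $V\in C^{m-2}$. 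So the sentence ``This $V$ is $C^{m-1}$'' is unjustified, and that regularity is precisely what the lemma asserts. The construction does work if $X=-\nu$ on $\pt\mcM$, since then $a\equiv0$ and $V=-X$. But producing such an $X$ is already the content of the lemma. (If the hypothesis is read as a smooth manifold carrying a $C^m$ metric, a smooth collar exists and the loss disappears, but that is not the reading you adopted.)

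The simplest repair is to drop the collar entirely. Take a boundary chart in which $\mcM$ corresponds to $\{x_N\geq0\}$. There the formula $\nu_\beta=-(g^{NN})^{-1/2}g^{iN}\pt_i$ is defined and $C^{m-1}$ on the whole chart, and it coincides with the outward unit normal on $\{x_N=0\}$. Glue finitely many of these with a $C^m$ partition of unity $\{\eta_\beta\}$ satisfying $\sum_\beta\eta_\beta=1$ on $\pt\mcM$. The result $W=\sum_\beta\eta_\beta\nu_\beta$ is $C^{m-1}$ and equals $\nu$ on $\pt\mcM$, so $|W|_g\geq\frac12$ on some neighbourhood $O$ of $\pt\mcM$. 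Choose $\chi\in C^m(\mcM)$ with $0\leq\chi\leq1$, $\chi=1$ on $\pt\mcM$ and $\supp\chi\subset O$. Then $\textbf{n}=\chi W/|W|_g$ on $O$ and $\textbf{n}=0$ elsewhere has all the required properties. Equivalently, use $\operatorname{grad}_g\rho/|\operatorname{grad}_g\rho|_g$ for a $C^m$ boundary-defining function $\rho$ (negative in the interior). This is the direct analogue of your $\nabla d$ and needs neither geodesics nor flows.
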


\begin{theorem}\label{04.07.T1}
	Let $\vp^0\in H_0^1(\Om;w)$ and $f\in L^2(0,T; H_0^1(\Om;w))$. Then there exists a unique weak solution
	\begin{equation*}
		\vp\in L^2(0,T; H_0^1(\Om;w))\cap H^1(0,T;H^{-1}(\Om;w))
	\end{equation*}
	to the equation \eqref{12.14.1} with respect to $(\vp^0,f)$, 
	and 
	\begin{equation}\label{04.07.3}
		\begin{split}
			&\esssup_{t\in [0,T]}\|\vp(t)\|_{H_0^1(\Om;w)}+\|\pt_t\vp\|_{L^2(0,T; H^{-1}(\Om;w))}+\left\|\f{\pt\vp}{\pt\nu}\right\|_{L^2(\pt Q)}\\
			&\leq C\left(\|\vp^0\|_{H_0^1(\Om;w)}+\|f\|_{L^2(0,T; H_0^1(\Om;w))}\right), 
		\end{split}
	\end{equation}
	where the positive constant $C$ depends only on $\al, N$ and $M$ and $T$. And if $f=0$, then 
	\begin{equation}\label{04.10.13}
		\|\vp(t)\|_{L^2(\Om)}=\|\vp^0\|_{L^2(\Om)}, \mbox{ for all } t\in [0,T].
	\end{equation}
	
	Moreover, if we further assume $\vp^0\in D(\mcA)$ and $f\in H^1(0,T; H_0^1(\Om;w))$, then the weak solution 
	\begin{equation*}
		\vp\in L^2(0,T; D(\mcA))\cap H^1(0,T; L^2(\Om))
	\end{equation*}
	to the equation \eqref{12.14.1} with respect to $(\vp^0,f)$, 
	and 
	\begin{equation}\label{04.07.4}
		\begin{split}
			&\esssup_{t\in [0,T]}\left(\|\vp(t)\|_{D(\mcA)}+\|\vp(t)\|_{H^2(\Om-B_{3R})}+\|\pt_t\vp\|_{L^2(Q)}\right)\\
			&\leq C\left(\|\vp^0\|_{D(\mcA)}+\|f\|_{H^1(0,T; H_0^1(\Om;w))}\right), 
		\end{split}
	\end{equation}
	and if $\vp^0\in D(\mcA^\f{3}{2})$ and $f=0$ then 
	\begin{equation}\label{04.11.12}
		\left\|\f{\pt\vp}{\pt\nu}\right\|_{H^1(0,T; L^2(\pt\Om))}\leq C \|\vp^0\|_{D(\mcA^\f{3}{2})}, 
	\end{equation} 
	where the positive constant $C$ depends only on $\al, \Om-B_{R}$ and $T$. 
\end{theorem}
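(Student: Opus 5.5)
We prove Theorem \ref{04.07.T1} by a Galerkin argument in the eigenbasis $\{\Phi_n\}$ of Notation \ref{04.07.N1}, with energy estimates obtained from the spectral calculus (Lemma \ref{06.28.L1}, Corollary \ref{04.09.C1}), and a local multiplier argument away from the origin for the normal derivative.

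\textbf{Existence, uniqueness and conservation.} Write $\vp(t)=\sum_n c_n(t)\Phi_n$ and $f_n(t)=(f(t),\Phi_n)_{L^2(\Om)}$. Then (i) of Definition \ref{04.07.D1} decouples into the scalar ODEs $\im c_n'+\la_n c_n=f_n$, $c_n(0)=\vp^0_n$, whose solutions are
\begin{equation*}
c_n(t)=e^{\im\la_n t}\vp^0_n-\im\int_0^t e^{\im\la_n(t-s)}f_n(s)\,\df s .
\end{equation*}
By Lemma \ref{06.28.L1}(i) the $H_0^1(\Om;w)$-norm is $(\sum\la_n|c_n|^2)^{1/2}$. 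Cauchy--Schwarz then gives $\sup_t\|\vp(t)\|_{H_0^1(\Om;w)}\le\|\vp^0\|_{H_0^1(\Om;w)}+\sqrt T\|f\|_{L^2(0,T;H_0^1(\Om;w))}$. The equation gives $\pt_t\vp=-\im(f-\mcA\vp)$. Corollary \ref{04.09.C1} with $s=-\tfrac12$, together with Remark \ref{04.09.R1}(3), makes $\mcA$ an isometry $H_0^1(\Om;w)\to H^{-1}(\Om;w)$, which bounds $\pt_t\vp$ in $L^2(0,T;H^{-1}(\Om;w))$. Uniqueness follows because, for $\vp^0=0$ and $f=0$, every $c_n$ vanishes. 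When $f=0$ we have $|c_n(t)|=|\vp^0_n|$, which is \eqref{04.10.13}.

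\textbf{$D(\mcA)$ regularity.} For $\vp^0\in D(\mcA)$, integrate by parts in $s$ in the Duhamel term:
\begin{equation*}
\la_n\int_0^t e^{\im\la_n(t-s)}f_n\,\df s=\im\Bigl[f_n(t)-e^{\im\la_n t}f_n(0)\Bigr]-\im\int_0^t e^{\im\la_n(t-s)}f_n'(s)\,\df s .
\end{equation*}
This bounds $\la_n c_n$ in $\ell^2$ by $\|\vp^0\|_{D(\mcA)}+\|f\|_{H^1(0,T;L^2)}$, where $H^1(0,T;L^2)\hookrightarrow C([0,T];L^2)$. Hence $\vp\in L^\iy(0,T;D(\mcA))$, and $\pt_t\vp=\im(\mcA\vp-f)\in L^2(Q)$. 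For the $H^2(\Om-B_{3R})$ bound, take a cutoff $\eta$ that vanishes on $B_{2R}$ and equals $1$ outside $B_{3R}$. On $\Om-B_{2R}$ the weight $w$ is smooth and bounded below. Then $\eta\vp$ solves a uniformly elliptic Dirichlet problem whose right-hand side is $\eta\mcA\vp$ plus commutator terms involving $\nabla\vp$ weighted by derivatives of $\eta$. These are controlled by $\|\vp\|_{H_0^1(\Om;w)}$, since $w\ge(2R)^\al$ on $\operatorname{supp}\nabla\eta$. Classical $H^2$ regularity for $\pt\Om\in C^3$ then gives the estimate.

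\textbf{Normal derivative.} We use the multiplier $\bf{n}\cdot\nabla\ol\vp$, where $\bf{n}$ is the field from Lemma \ref{11.30.L2}, cut off so that it is supported in $\Om-B_{2R}$. Concretely, multiply the equation by $\eta\,\bf{n}\cdot\nabla\ol\vp$, integrate over $Q$, and take real parts. This yields the identity
\begin{equation*}
\int_{\pt Q} w\Bigl|\f{\pt\vp}{\pt\nu}\Bigr|^2 = \text{interior terms},
\end{equation*}
where the interior terms are bounded by $\sup_t\|\vp\|_{H_0^1(\Om;w)}^2$, $\|f\|_{L^2(0,T;H_0^1)}\|\vp\|$, and the time-boundary term $\Image\int_\Om\vp\,\eta\,\bf{n}\cdot\nabla\ol\vp$ at $t=0,T$. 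The last term needs only $L^2\times H_0^1$ control because $\eta$ removes the origin. Since $w$ is bounded below on $\pt\Om$, this gives the boundary trace in \eqref{04.07.3}. We first carry out the computation for smooth data (finite Galerkin sums, with $\vp\in D(\mcA)$ locally $H^2$ near $\pt\Om$) and then pass to the limit by density.

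\textbf{Estimate \eqref{04.11.12}.} For $f=0$, apply the previous bound to $\psi=\pt_t\vp$. This $\psi$ solves the same equation with data $\psi(0)=\im\mcA\vp^0$, up to sign, which lies in $H_0^1(\Om;w)$ exactly when $\vp^0\in D(\mcA^{3/2})$ (Corollary \ref{04.09.C1}). Hence $\|\pt_\nu\pt_t\vp\|_{L^2(\pt Q)}\le C\|\vp^0\|_{D(\mcA^{3/2})}$, which gives the $H^1$ in time bound.

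\textbf{Main obstacle.} The hardest step is the rigorous justification of the multiplier identity. Galerkin approximants are only in $D(\mcA)$, and traces make sense only through local $H^2$ regularity near $\pt\Om$. The cutoff must keep all terms away from the degeneracy point while still producing $|\pt_\nu\vp|^2$ on $\pt\Om$.
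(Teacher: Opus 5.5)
Your proposal is correct and follows essentially the paper's route: an eigenfunction expansion (the paper's Galerkin scheme, which you write in closed Duhamel form, with uniqueness mode by mode instead of via the energy identity), cutoff elliptic regularity on $\Om-B_{3R}$, a boundary multiplier built from the field $\bf{n}$ of Lemma \ref{11.30.L2} localized away from the origin followed by density, and the trace bound applied to $\pt_t\vp$ with datum $\im\mcA\vp^0\in H_0^1(\Om;w)$. The one technical difference is in the multiplier: the paper uses $(\zeta\bf{n})\cdot\nabla\ol\vp+\frac12\ol\vp\Div(\zeta\bf{n})$, whose zeroth-order part exactly cancels the term $\frac12\Image\iint_Q(\pt_t\vp)\,\ol\vp\,\Div(\eta\bf{n})\,\df x\df t$ produced by the integration by parts in time; with your multiplier this term survives, it is missing from your list, and you must bound it separately, which is easy via the $H^{-1}(\Om;w)$--$H_0^1(\Om;w)$ pairing by $C\|\pt_t\vp\|_{L^2(0,T;H^{-1}(\Om;w))}\|\vp\|_{L^2(0,T;H_0^1(\Om;w))}$, since $\Div(\eta\bf{n})\in C^1$ is supported away from $0$.
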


\begin{proof}
We prove this theorem by the following steps. 
	
{\it Step 1}. We use Galerkin method. We assume $\vp^0\in H_0^1(\Om;w)$ and $f\in L^2(0,T; H_0^1(\Om;w))$. 
	
We denote 
\begin{equation}\label{04.07.5}
	\begin{split} 
	\vp^m(x,t)
	&=\sum_{n=1}^m\vp_n^m(t)\Phi_n(x),\\ f^m(x,t)
	&=\sum_{n=1}^m f_n^m(t)\Phi_n(x) \mbox{ with } f_n^m=(f,\Phi_n)_{L^2(\Om)} \mbox{ for } n=1,\cdots, m, 
	\end{split} 
\end{equation}
where $\vp_n^m, n=1,\cdots, m$ are the solution of the following ordinary differential equations
\begin{equation}\label{04.07.6}
	\begin{split}
		\im \pt_t\vp_n^m+\la_n\vp_n^m=f_n^m, \ n=1,\cdots, m 
	\end{split}
\end{equation}
with initial data
\begin{equation}\label{04.07.7}
	\vp_n^m(0)=(\vp^0,\Phi_n)_{L^2(\Om)}, \  n=1,\cdots, m,  
\end{equation}
where $\pt_t\vp_n^m=\f{\df\vp_n^m}{\df t}, n=1,\cdots, m$.  It is obvious that there exists a unique solution to \eqref{04.07.6} and \eqref{04.07.7}. 	Note that \eqref{04.07.6} is equivalent to the following equation
\begin{equation}\label{04.07.8}
	\im (\pt_t\vp^m, \Phi_n)_{L^2(\Om)}+(\vp^m, \Phi_n)_{H_0^1(\Om;w)}=( f^m, \Phi_n)_{L^2(\Om)}, \ n=1,\cdots, m. 
\end{equation}
	
{\it Step 2}. Energy estimate. 

Multiplying \eqref{04.07.6} by $\ol{\vp_n^m(t)}$, summing over $n=1,\cdots, m$, then 
\begin{equation}\label{04.07.9}
	\begin{split}
		\im \sum_{n=1}^m [\pt_t\vp_n^m(t)]\ol{\vp_n^m(t)}+\sum_{n=1}^m \la_n \vp_n^m(t) \ol{\vp_n^m(t)}=\sum_{n=1}^m f_n^m(t)\ol{\vp_n^m(t)}, 
	\end{split}
\end{equation}
and 
\begin{equation}\label{04.07.10}
	\begin{split}
		-\im \sum_{n=1}^m \ol{\pt_t\vp_n^m(t)}\vp_n^m(t)+\sum_{n=1}^m \la_n  \ol{\vp_n^m(t)}\vp_n^m(t)=\sum_{n=1}^m \ol{f_n^m(t)}\vp_n^m(t)
	\end{split}
\end{equation}
by taking conjugate to \eqref{04.07.9}. 
Hence 
\begin{equation}\label{04.07.12}
	\begin{split}
		\im \sum_{n=1}^m \f{\df}{\df t}\left(\vp_n^m\ol{\vp_n^m}\right)=\im 2\Image \sum_{n=1}^m f_n^m(t)\ol{\vp_n^m(t)}
	\end{split}
\end{equation}
by \eqref{04.07.9} minus \eqref{04.07.10}.   Note that \eqref{04.07.12} is 
\begin{equation}\label{04.07.11}
	\begin{split}
		\f{\df}{\df t} \|\vp^m\|_{L^2(\Om)}^2=2\Image (f^m, \vp^m)_{L^2(\Om)}\leq \|f^m\|_{L^2(\Om)}^2+\|\vp^m\|_{L^2(\Om)}^2, 
	\end{split}
\end{equation}
which shows that 
\begin{equation*}
\|\vp^m(t)\|_{L^2(\Om)}^2\leq e^t\left(\|\vp^m(0)\|_{L^2(\Om)}^2+\int_0^t\|f^m(t)\|_{L^2(\Om)}^2\df t\right)
\end{equation*}
by integrating on $[0,t]$ and applying Gronwall's inequality. Therefore, 
\begin{equation}\label{04.07.13}
\esssup_{t\in [0,T]}\|\vp^m(t)\|_{L^2(\Om)}^2\leq e^T\left(\|\vp^0\|_{L^2(\Om)}^2+\|f\|_{L^2(Q)}^2\right) 
\end{equation}
for all $m\in\N^*$. 

Multiplying \eqref{04.07.6} by $\ol{\pt_t\vp_n^m(t)}$, summing $n=1,\cdots, m$, we get
\begin{equation}\label{04.08.1}
	\begin{split}
		\im \sum_{n=1}^m \left|\pt_t\vp_n^m\right|^2+\sum_{n=1}^m \la_n \vp_n^m\ol{\pt_t\vp_n^m}=\sum_{n=1}^m f_n^m\ol{\pt_t\vp_n^m}, 
	\end{split}
\end{equation}
and 
\begin{equation}\label{04.08.2}
	\begin{split}
		-\im \sum_{n=1}^m \left|\pt_t\vp_n^m\right|^2+\sum_{n=1}^m\la_n \ol{\vp_n^m}\pt_t\vp_n^m=\sum_{n=1}^m \ol{f_n^m}\pt_t\vp_n^m
	\end{split}
\end{equation}
by taking conjugate to \eqref{04.08.1}. Then 
\begin{equation*}
	\begin{split}
		2\Real \sum_{n=1}^m \la_n \vp_n^m\ol{\pt_t\vp_n^m}=2\Real \sum_{n=1}^m f_n^m\ol{\pt_t\vp_n^m}
	\end{split}
\end{equation*}
by \eqref{04.08.1} plus \eqref{04.08.2}. That is 
\begin{equation}\label{04.08.3}
	\begin{split}
		\f{\df}{\df t}\|\vp^m\|_{H_0^1(\Om;w)}^2
		&=2( \vp^m, \pt_t\vp^m)_{H_0^1(\Om;w)}\\
		&=2( f, \pt_t\vp^m)_{L^2(\Om)}\leq \|f(t)\|_{H_0^{1}(\Om;w)}^2+\|\pt_t\vp^m\|_{H^{-1}(\Om;w)}^2
	\end{split}
\end{equation}
by Lemma \ref{06.28.L1} and Corollary \ref{04.09.C1}. 
Note that  from \eqref{04.07.6} and Lemma \ref{06.28.L1} and Corollary \ref{04.09.C1}, we get 
\begin{equation}\label{04.10.1}
	\begin{split}
		\|\pt_t\vp^m\|_{H^{-1}(\Om;w)}^2
		&=\sum_{n=1}^m\la_n^{-1}|\pt_t\vp_n^m(t)|^2\leq 2\sum_{n=1}^m \la_n|\vp_n^m(t)|^2+2\sum_{n=1}^m \la_n^{-1}|f_n^m(t)|^2\\
		&\leq 2\|\vp^m\|_{H_0^1(\Om;w)}^2+2\la_1^{-1}\|f^m(t)\|_{L^2(\Om)}^2.
	\end{split}
\end{equation}
Together with this, \eqref{04.10.7} and  \eqref{04.08.3}, we get 
\begin{equation*}
	\begin{split}
		\f{\df}{\df t}\|\vp^m\|_{H_0^1(\Om;w)}^2\leq C\|f(t)\|_{H_0^1(\Om;w)}^2+2\|\vp^m\|_{H_0^1(\Om;w)}^2, 
	\end{split}
\end{equation*}
where the positive constant $C$ depends only on $\al, N$ and $M$. Hence
\begin{equation*}
	\begin{split}
		\|\vp^m(t)\|_{H_0^1(\Om;w)}^2\leq Ce^{2t}\left(\|\vp^m(0)\|_{H_0^1(\Om;w)}^2+\int_0^t\|f(t)\|_{H_0^1(\Om;w)}^2\df t\right)
	\end{split}
\end{equation*}
by the Gronwall's inequality, 
where the positive constant $C$ depends only on $\al, N$ and $M$. This shows that 
\begin{equation}\label{04.10.3}
	\begin{split}
		\esssup_{t\in [0,T]}\|\vp^m\|_{H_0^1(\Om;w)}^2\leq Ce^{2T}\left(\|\vp^0\|_{H_0^1(\Om;w)}^2+\|f\|_{L^2(0,T; H_0^1(\Om;w))}^2\right), 
	\end{split}
\end{equation}
where the positive constant $C$ depends only on $\al, N$ and $M$. Together with this and   \eqref{04.10.1}, we get
\begin{equation}\label{04.10.2}
	\begin{split}
		\|\pt_t\vp^m\|_{L^2(0,T; H^{-1}(\Om;w))}^2\leq C\left(\|\vp^0\|_{H_0^1(\Om;w)}^2+\|f\|_{L^2(0,T; H_0^1(\Om;w))}^2\right), 
	\end{split}
\end{equation}
where the positive constant $C$ depends only on $\al, N, M$ and $T$. 

{\it Step 3}. Approximation, or convergence. 

From \eqref{04.07.13} and \eqref{04.10.3} and \eqref{04.10.2}, there exists a subsequence of $\{\vp^m\}_{m\in\N^*}$, still denoted by itself, and 
\begin{equation*}
	y\in L^2(0,T; H_0^1(\Om;w))\cap H^1(0,T; H^{-1}(\Om;w))
\end{equation*}
such that 
\begin{equation}\label{04.10.4}
	\begin{split}
		\vp^m
		&\ra y \mbox{ weak star in } L^\iy (0,T; H_0^1(\Om;w)),\\
		\pt_t\vp^m
		&\ra \pt_ty \mbox{ weakly in } L^2(0, T; H^{-1}(\Om;w)). 
	\end{split}
\end{equation}
Moreover, we obtain 
\begin{equation}\label{04.10.5}
	\begin{split}
		\|y\|_{L^\iy(0,T; H_0^1(\Om;w))}+\|\pt_ty\|_{L^2(0,T; H^{-1}(\Om;w))}\leq C\left(\|\vp^0\|_{H_0^1(\Om;w)}+\|f\|_{L^2(0,T; H_0^1(\Om;w))}\right), 
	\end{split}
\end{equation}
where the positive constant $C$ depends only on $\al, N$ and $M$ and $T$. From \cite[Theorem 3 in Chapter 5.9.2, p.~303]{Evans}, we get 
\begin{equation}\label{04.10.6}
	y\in C([0,T]; L^2(\Om)).
\end{equation}

{\it Step 4}. We prove $y$ is a weak  solution to the equation \eqref{12.14.1}. i.e., $y=\vp$. 

Now, for each $v\in C^1([0,T]; H_0^1(\Om;w))$ having the form
\begin{equation}\label{04.10.9}
	\begin{split}
		v(t)=\sum_{n=1}^kd_n^k(t)\Phi_n, \mbox{ where } \{d_n^k \}_{n=1}^k \mbox{ are given smooth functions}, 
	\end{split}
\end{equation}
then, for $m\geq k$, multiplying \eqref{04.07.8} by $\ol{d_n^k(t)}$, summing $n=1,\cdots, k$, integrating with respect to $t\in [0,T]$, note that $( \im \pt_t\vp^m, v)_{H^{-1}(\Om;w), H_0^1(\Om;w)}=( \im \pt_t\vp^m, v)_{L^2(\Om)}$, we get
\begin{equation}\label{04.10.10}
	\begin{split}
		\int_0^T( \im\pt_t\vp^m, v)_{H^{-1}(\Om;w), H_0^1(\Om;w)}\df t+\int_0^T(\vp^m, v)_{H_0^1(\Om;w)}\df t=\int_0^T(f^m, v)_{L^2(\Om)}\df t. 
	\end{split}
\end{equation}
This together with \eqref{04.10.4} and $f^m\ra f$ strongly in $L^2(0,T; H_0^1(\Om;w))$, we get 
\begin{equation}\label{04.10.8}
	\begin{split}
		\int_0^T(\im \pt_t y, v)_{H^{-1}(\Om;w), H_0^1(\Om;w)}\df t+\int_0^T( y, v)_{H_0^1(\Om;w)}\df t=\int_0^T( f, v)_{L^2(\Om)}\df t 
	\end{split}
\end{equation}
for all $v$ with the form \eqref{04.10.9}. Note that the set of the functions $v$ having the form \eqref{04.10.9} is dense in $L^2(0,T; H_0^1(\Om;w))$, then \eqref{04.10.8} holds for all $v\in L^2(0,T; H_0^1(\Om;w))$. This shows that 
\begin{equation*}
	(\im \pt_ty, v)_{H^{-1}(\Om;w), H_0^1(\Om;w)}+(y, v)_{H_0^1(\Om;w)}=( f, v)_{L^2(\Om)}
\end{equation*}
for all $v\in H_0^1(\Om;w)$. This proves Definition \ref{04.07.D1}. 

Now, we prove $y(0)=\vp^0$. 

From \eqref{04.10.8} and \eqref{04.10.6}, we get
\begin{equation*}
	\begin{split}
		-\int_0^T(\im y, \pt_tv)_{L^2(\Om)}\df t+\int_0^T(y, v)_{H_0^1(\Om;w)}\df t=\int_0^T( f,v)_{L^2(\Om)}\df t+( \im y(0), v(0))_{L^2(\Om)}
	\end{split}
\end{equation*}
for all $v\in C^1([0,T]; H_0^1(\Om;w))$ with $v(T)=0$.  From \eqref{04.10.10}, we get 
\begin{equation*}
	-\int_0^T(\im \vp^m, \pt_tv)_{L^2(\Om)}\df t+\int_0^T( \vp^m, v)_{H_0^1(\Om;w)}\df t=\int_0^T( f^m,v)_{L^2(\Om)}\df t+(\im \vp^m(0),v(0))_{L^2(\Om)},
\end{equation*} 
then 
\begin{equation*}
	-\int_0^T(\im y, \pt_tv)_{L^2(\Om)}\df t+\int_0^T( y, v)_{H_0^1(\Om;w)}\df t=\int_0^T (f, v)_{L^2(\Om)}\df t+(\im \vp^0, v(0))_{L^2(\Om)}
\end{equation*}
by \eqref{04.07.7} and \eqref{04.10.4}. These show that $y(0)=\vp^0$. 

{\it Step 5}. We prove the solution to equation \eqref{12.14.1} is unique. 

We only need to prove $\vp^0=f=0$, then the weak solution $\vp$ to the equation \eqref{12.14.1} is zero function. Indeed, taking $v= \vp$ in \eqref{04.10.8}, we get
\begin{equation}\label{04.10.11}
	\begin{split}
		\int_0^T(\im \pt_t\vp, \vp)_{H^{-1}(\Om;w), H_0^1(\Om;w)}\df t+\int_0^T(\vp, \vp)_{H_0^1(\Om;w)}\df t=0,
	\end{split}
\end{equation}
and 
\begin{equation}\label{04.10.12}
	-\int_0^T(\im \ol{\pt_t\vp}, \ol\vp)_{H^{-1}(\Om;w),H_0^1(\Om;w)}\df t+\int_0^T( \vp,\vp)_{H_0^1(\Om;w)}\df t=0
\end{equation}
by taking conjugate of \eqref{04.10.11}. These show that 
\begin{equation*}
	\begin{split}
		\|\vp(T)\|_{L^2(\Om)}^2-\|\vp(0)\|_{L^2(\Om)}^2=\int_0^T\f{\df}{\df t}\|\vp\|_{L^2(\Om)}^2\df t=0
	\end{split}
\end{equation*}
by \eqref{04.10.11} minus \eqref{04.10.12}.  Hence, $\varphi(T)=0$, which implies that $\varphi=0$ for all $t\in [0,T]$ by replacing $T$ with $t$ in \eqref{04.10.11} and \eqref{04.10.12}.

{\it Step 6}. Improved regularity. Assume $\vp^0\in D(\mcA)$ and  $f\in H^1(0,T; H_0^1(\Om;w))$.

From $f\in H^1(0,T; H_0^1(\Om;w))$ and \eqref{04.07.6}, we know that   $\pt_t\vp_n^m, n=1,\cdots,m$ is the solution of the following system 
\begin{equation}\label{04.11.1}
	\im \pt_{tt}\vp_n^m+\la_n\pt_t\vp_n^m=\pt_tf_n^m, \ n=1,\cdots, m
\end{equation}
with initial data 
\begin{equation}\label{04.11.2}
	\begin{split}
		\pt_t\vp_n^m(0)=-\im f_n^m(0)+\im \la_n \vp_n^m(0)=-\im f_n^m(0)+\im \la_n (\vp^0,\Phi_n)_{L^2(\Om)},
	\end{split}
\end{equation}
We note that $f_n^m(0)$ is well defined, since $f_n^m\in C[0,T]$ by $f\in H^1(0,T; H_0^1(\Om;w))$. 

Multiplying \eqref{04.11.1} by $\ol{\pt_t\vp_n^m}$, summing over $n=1,\cdots,m$, then 
\begin{equation}\label{04.11.3}
	\begin{split}
		\im \sum_{n=1}^m \left(\pt_{tt}\vp_n^m\right)\ol{\pt_t\vp_n^m}+\sum_{n=1}^m\la_n (\pt_t \vp_n^m)\ol{\pt_t\vp_n^m}=\sum_{n=1}^m \left(\pt_tf_n^m\right)\ol{\pt_t\vp_n^m}, 
	\end{split}
\end{equation}
and 
\begin{equation}\label{04.11.4}
	\begin{split}
		-\im \sum_{n=1}^m \ol{\pt_{tt}\vp_n^m}\pt_t\vp_n^m+\sum_{n=1}^m \la_n \ol{\pt_t\vp_n^m}\pt_t\vp_n^m=\sum_{n=1}^m \ol{\pt_tf_n^m}\pt_t\vp_n^m
	\end{split}
\end{equation}
by taking conjugate of \eqref{04.11.3}. These imply that 
\begin{equation*}
	\begin{split}
		\im \sum_{n=1}^m \f{\df}{\df t}(\pt_{t}\vp_n^m)\ol{\pt_t\vp_n^m}=\im 2\Image \sum_{n=1}^m (\pt_tf_n^m)\ol{\pt_t\vp_n^m}
	\end{split}
\end{equation*}
by \eqref{04.11.3} minus \eqref{04.11.4}, and then
\begin{equation}\label{04.11.5}
	\begin{split}
		\f{\df}{\df t}\|\pt_t\vp^m\|_{L^2(\Om)}^2=2\Image (\pt_tf^m, \pt_t\vp^m)_{L^2(\Om)}\leq \|\pt_tf^m\|_{L^2(\Om)}^2+\|\pt_t\vp^m\|_{L^2(\Om)}^2,
	\end{split}
\end{equation}
which shows that 
\begin{equation}\label{04.11.6}
	\begin{split}
		\|\pt_t\vp^m(t)\|_{L^2(\Om)}^2\leq e^t\left(\|\pt_t\vp^m(0)\|_{L^2(\Om)}^2+\int_0^t \|\pt_tf^m\|_{L^2(\Om)}^2\df t\right), \mbox{ for all } t\in [0,T]
	\end{split}
\end{equation}
by Gronwall's inequality. Hence, from \eqref{04.11.2} and \eqref{04.11.6} and Lemma \ref{06.28.L1} and Corollary \ref{04.09.C1}, we get
\begin{equation}\label{04.13.5}
	\begin{split}
		\esssup_{t\in [0,T]}\|\pt_t\vp^m(t)\|_{L^2(\Om)}^2
		&\leq e^T\left(\|f^m(0)\|_{L^2(\Om)}^2+\|\vp^0\|_{D(\mcA)}^2+\|\pt_tf^m\|_{L^2(Q)}^2\right)\\
		&\leq C\left(\|\vp^0\|_{D(\mcA)}^2+\|f\|_{H^1(0,T; L^2(\Om))}^2\right) 
	\end{split}
\end{equation}
by \cite[Theorem 2 in Chapter 5.9.2, p.~302]{Evans}, where the positive constant $C$ depends only on $T$. Together  with this and \eqref{04.10.4}, we obtain
\begin{equation}\label{04.11.7}
	\begin{split}
		\esssup_{t\in [0,T]}\|\pt_t\vp(t)\|^2_{L^2(\Om)}\leq C\left(\|\vp^0\|_{D(\mcA)}^2+\|f\|_{H^1(0,T; L^2(\Om))}^2\right), 
	\end{split}
\end{equation}
where the positive constant $C$ depends only on $T$.

Next, from \eqref{12.14.1} and \eqref{04.11.7} and \cite[Theorem 2 in Chapter 5.9.2, p.~302]{Evans}, we get
\begin{equation}\label{04.11.8}
	\begin{split}
		\esssup_{t\in [0,T]}\|\mcA \vp \|^2_{L^2(\Om)}\leq C\left(\|\vp^0\|_{D(\mcA)}^2+\|f\|_{H^1(0,T; L^2(\Om))}^2\right), 
	\end{split}
\end{equation}
where the positive constant $C$ depends only on $T$.

Finally, choosing 
\begin{equation}\label{04.11.11}
	\zeta\in C^{\infty}(\mathbb R^N),\quad  0\leq \zeta\leq 1
\end{equation} 
such that 
\begin{equation*}
	\zeta=0 \mbox{ on }B_{2R}, \quad \zeta=1 \mbox{ on }\Om-B_{3R}, \quad |\nabla\zeta|\leq CR^{-1} \mbox{ and }  \left|D^2\zeta\right|\leq CR^{-2} \mbox{ on }\R^N, 
\end{equation*}
where the positive constant $C$ are absolute. 
Then $\psi=\zeta \vp$ is the solution of the following equation
\begin{equation}\label{04.11.9}
	\mathcal A\psi=\zeta f-\im \zeta \pt_t\vp-2\nabla\zeta \cdot w\nabla\vp+\vp\mcA\zeta,\quad \psi=0 \mbox{ on } \partial{(\Om-\ol{B}_R)}. 
\end{equation}
By the standard elliptic regularity \cite[Theorem 1 (p.~327) in Chapter 6.3.1 and Theorem 4 (p.~334) in Chapter 6.3.2]{Evans}, we get
\begin{equation*}
		\|\psi(t)\|_{H^2(\Om-B_R(0))} \leq C\left(\|f(t)\|_{L^2(\Om)}+\|\pt_t\vp(t)\|_{L^2(\Om)}+\|\vp(t)\|_{H_0^1(\Om;w)}\right),
\end{equation*}
where the positive constant $C$ depends only on $\al$ and $\Om-B_R$. This shows that 
\begin{equation}\label{04.11.10}
	\begin{split}
		\esssup_{t\in [0,T]}\|\vp(t)\|_{H^2(\Om-B_{3R})}\leq C\left(\|\vp^0\|_{D(\mcA)}+\|f\|_{H^1(0,T; H_0^1(\Om;w))}\right)
	\end{split}
\end{equation}
by \eqref{04.10.5} and \eqref{04.11.7} and \cite[Theorem 2 in Chapter 5.9.2, p.~302]{Evans}, where the positive constant $C$ depends only on $\al$ and $\Om-B_R$ and $T$. 

{\it Step 7}. Hidden regularity. 
i.e., we shall show the three terms in \eqref{04.07.3} and \eqref{04.11.12}. 

We first prove \eqref{04.07.3}, as well as \eqref{04.11.12} follows by an analogous argument. 

By the density argument, i.e., $D(\mc A)$ is dense in $H_0^1(\Omega;{w})$ and $H^1(0,T;H_0^1(\Omega;w))$ is dense in $L^2(0,T;H_0^1(\Omega;w))$, we only need to show the case $\varphi^0 \in D(\mc A)$ and $f \in H^1(0,T;H_0^1(\Omega;w))$. 

Indeed, let $\bf{n}$ be defined in Lemma \ref{11.30.L2}, $\zeta$ be defined in \eqref{04.11.11}. Multiplying \eqref{12.14.1} by $(\zeta\bf{n})\cdot \nabla \ol\vp+\f{1}{2}\ol\vp\Div(\zeta \bf{n})$, integrating on $Q$, then 
\begin{equation}\label{04.11.13}
	A_1+A_2=A_3,
\end{equation}
where 
\begin{equation*}
	\begin{split}
		A_1
		&\equiv \im \iint_Q \pt_t\vp\left( (\zeta\bf{n})\cdot\nabla\ol \vp+\f{1}{2}\ol\vp\Div(\zeta\bf{n})\right)\df x\df t,\\
		A_2
		&\equiv -\iint_Q \left[\Div(w\nabla\vp)\right]\left(  (\zeta\bf{n})\cdot\nabla\ol \vp+\f{1}{2}\ol\vp\Div(\zeta\bf{n})\right)\df x\df t,\\
		A_3
		&\equiv \iint_Q f\left( (\zeta\bf{n})\cdot\nabla\ol \vp+\f{1}{2}\ol\vp\Div(\zeta\bf{n})\right)\df x\df t.
	\end{split}
\end{equation*}

Note that $\supp(\zeta\bf{n})\s \Om-B_R$, then we can integrate the terms $A_i, i=1,2,3$ by parts since the degenerate point $0\in\R^N$ has been cut-off.

From 
\begin{equation*}
	\begin{split}
		&2\im \iint_Q (\pt_t\vp)\left[(\zeta\bf{n})\cdot \nabla\ol\vp\right]\df x\df t-2\im \iint_Q (\pt_t\ol\vp)\left[(\zeta\bf{n})\cdot \nabla\vp\right]\df x\df t\\
		&=\im \int_\Om \vp(\zeta \bf{n})\cdot \nabla\ol\vp\df x\bigg|_{t=0}^{t=T}-\im \int_\Om \ol\vp(\zeta\bf{n})\cdot \nabla\vp\df x\bigg|_{t=0}^{t=T}\\
		&\hspace{4.5mm}+\im \iint_Q \vp(\pt_t\ol\vp) \Div(\zeta \bf{n})\df x\df t-\im \iint_Q (\pt_t\vp)\ol\vp \Div(\zeta \bf{n})\df x\df t, 
	\end{split}
\end{equation*}
we get
\begin{equation}\label{04.12.1}
	\begin{split}
		2\Real A_1
		&=\im\iint_Q (\pt_t\vp)\left[(\zeta\bf{n})\cdot \nabla\ol\vp\right]\df x\df t-\im\iint_Q (\pt_t\ol\vp)\left[(\zeta \bf{n})\cdot \nabla\vp\right]\df x\df t\\
		&\hspace{4.5mm}+\f{1}{2}\im \iint_Q (\pt_t\vp)\ol\vp \Div(\zeta \bf{n})\df x\df t-\f{1}{2}\im\iint_Q (\pt_t\ol\vp)\vp \Div(\zeta \bf{n})\df x\df t\\
		&=-\Image \int_\Om \vp(\zeta \bf{n})\cdot \nabla\ol\vp\df x\bigg|_{t=0}^{t=T}. 
	\end{split}
\end{equation}

From $\vp=0$ on $\pt Q$ we obtain 
\begin{equation*}
	\begin{split}
		&2\Real \iint_Q w\nabla\vp\cdot \left[D^2\ol\vp(\zeta \bf{n})\right]\df x\df t\\
		&=\iint_Q w\nabla \vp\cdot \left[D^2\ol\vp(\zeta \bf{n})\right]\df x\df t+\iint_Q w\nabla\ol\vp\cdot \left[D^2\vp(\zeta\bf{n})\right]\df x\df t\\
		&=\sum_{i,j=1}^N\iint_Q w \f{\pt}{\pt x_j}\left(\f{\pt\vp}{\pt x_i}\f{\pt\ol\vp}{\pt x_i}\right)\zeta \bf{n}_j\df x\df t\\
		&=\iint_{\pt Q} w|\nabla\vp|^2\zeta \bf{n}\cdot \nu\df S\df t-\iint_Q |\nabla\vp|^2\Div(w\zeta \bf{n})\df x\df t\\
		&=\iint_{\pt Q} w\left|\f{\pt \vp}{\pt \nu}\right|^2\df S\df t-\iint_Q |\nabla\vp|^2\Div(w\zeta \bf{n})\df x\df t, 
	\end{split}
\end{equation*}
we get
\begin{equation*}
	\begin{split}
		\Real \iint_Q w\nabla\vp\cdot \left[D^2\ol\vp(\zeta \bf{n})\right]\df x\df t=\f{1}{2}\iint_{\pt Q} w\left|\f{\pt \vp}{\pt \nu}\right|^2\df S\df t-\f{1}{2}\iint_Q |\nabla\vp|^2\Div(w\zeta \bf{n})\df x\df t.
	\end{split}
\end{equation*}
Then
\begin{equation}\label{04.12.2}
	\begin{split}
		\Real A_2
		&=-\f{1}{2}\iint_{\pt Q} w\left|\f{\pt\vp}{\pt\nu}\right|^2\df S\df t+\Real \iint_Q w\nabla \vp\cdot \left[D(\zeta \bf{n})\nabla\ol\vp\right]\df x\df t\\
		&\hspace{4.5mm}+\f{1}{2}\iint_Q |\nabla\vp|^2\zeta \bf{n}\cdot \nabla w\df x\df t+\f{1}{2}\Real \iint_Q \ol\vp w\nabla\vp\cdot \nabla\Div(\zeta \bf{n})\df x\df t. 
	\end{split}
\end{equation}

Together with \eqref{04.11.13} and  \eqref{04.12.1} and \eqref{04.12.2}, we get
\begin{equation}\label{04.12.3}
	\begin{split}
		\iint_{\pt Q}w\left|\f{\pt\vp}{\pt\nu}\right|^2\df S\df t\leq C\left(\|\vp^0\|^2_{H_0^1(\Om;w)}+\|f\|^2_{L^2(0,T; H_0^1(\Om;w))}\right), 
	\end{split}
\end{equation}
where the positive constant $C$ depends only on $\al, \Om-B_R$ and $T$. 

Let $\varphi^0 \in H_0^1(\Omega;w)$ and $f \in L^2(0,T;H_0^1(\Om;w))$. There exist two sequences $\{\varphi_k^0\}_{k \in \mathbb N^*} \subset D(\mathcal A)$ and $\{f_k\}_{k\in \mathbb N^*} \subset H^1(0,T;H_0^1(\Omega;w)) $, such that 
\begin{equation*}
\varphi	_k^0 \rightarrow \varphi^0 \mbox{ strongly in } H_0^1(\Omega;w), \qquad f_k \rightarrow f \mbox{ strongly in } L^2(0,T;H_0^1(\Omega;w)).
\end{equation*}

Let $\varphi_k$ be the weak solution of \eqref{12.14.1} with respect to $(\varphi_k^0,f_k)$ for all $k \in \mathbb N^*$. Applying \eqref{04.12.3} to $\varphi_k-\varphi_l$, we have
$$\iint _{\partial Q} w \left|\frac{\partial (\varphi_k - \varphi_l) }{\partial \nu}\right|^2 \df S \df t \leq C\left(\| \varphi_k^0 - \varphi_l^0\|^2_{H_0^1(\Om;w)} + \| f_k - f_l\|^2_{L^2(0,T;H_0^1(\Omega;w))}\right).$$
Since $0\in\Omega$ and $\partial\Omega$ is compact, we have $\min_{x\in\partial\Omega}|x|>0$. Thus, $w=|x|^\alpha$ is bounded below by a positive constant on $\partial\Omega$, and the preceding weighted estimate implies the Cauchy property in $L^2(\partial Q)$. Hence, $\{\frac{\partial \varphi_k}{\partial \nu}\}_{k\in\mathbb N^*}$ is a Cauchy sequence in $L^2(\partial Q)$. Therefore, there exists $z\in L^2(\partial Q)$ such that
\begin{equation}\label{26.10.07.02}
	\frac{\partial \varphi_k}{\partial \nu} \rightarrow z \mbox{ strongly in } L^2(\partial Q).
\end{equation}

Now, we prove $z =\frac{ \partial \varphi}{\partial \nu}$, where $\varphi$  is the weak solution of \eqref{12.14.1} with respect to $(\varphi^0, f)$.

Let $\zeta$ be defined as in \eqref{04.11.11}. For each $v \in C^{\infty}(\ol Q)$ satisfying $v(0) = v(T) = 0$, we have 
\begin{equation*}
\iint_{Q} 	\im \partial_t\varphi_k v \df x \df t - \iint_{Q} v \Div(|x|^\alpha \nabla \varphi_k) \df x \df t = \iint_{Q} f_k v \df x \df t.
\end{equation*}
Note that 
\begin{equation*}
\begin{split}
&\iint_{Q} v \Div(|x|^\alpha \nabla \varphi_k) \df x \df t \\
=& \iint_{Q} v \Div (|x|^{\alpha} \nabla (\zeta \varphi_k)) \df x \df t  + \iint_{Q} v \Div (|x|^{\alpha} \nabla ((1 - \zeta) \varphi_k)) \df x \df t  \\
=&\iint_{\partial Q} v |x|^{\alpha} \frac{\partial (\zeta \varphi_k)}{\partial \nu} \df S \df t - \iint_{Q} |x|^{\alpha} \nabla  (\zeta \varphi_k) \cdot \nabla v \df x \df t -  \iint_{Q} |x|^{\alpha} \nabla  ((1- \zeta) \varphi_k) \cdot \nabla v \df x \df t\\
=& \iint_{\partial Q} v |x|^{\alpha} \frac{\partial  \varphi_k}{\partial \nu} \df S \df t - \iint_{Q} |x|^{\alpha} \nabla  \varphi_k \cdot \nabla v \df x \df t ,
\end{split}
\end{equation*}
where we have used $\zeta=1$ in a neighborhood of $\pt\Om$, $\zeta\vp_k\in L^2(0,T;H^2(\Om-B_R))$, and $(1-\zeta)\vp_k\in L^2(0,T;H_0^1(\Om;w))$. Then, 
\begin{equation*}
-\iint_{Q} 	\im \varphi_k \partial_t v \df x \df t - \iint_{\partial Q} v |x|^{\alpha} \frac{\partial \varphi_k}{\partial \nu} \df S \df t + \iint_{Q} |x|^{\alpha} \nabla  \varphi_k \cdot \nabla v \df x \df t = \iint_{Q} f_k v \df x \df t.
\end{equation*}
Applying the continuous dependence estimate \eqref{04.10.5} to $\varphi_k-\varphi$, whose initial datum and source term are $\varphi_k^0-\varphi^0$ and $f_k-f$, respectively, yields $\varphi_k\to\varphi$ strongly in $L^2(0,T;H_0^1(\Omega;w))$. Passing to the limit as $k\to+\infty$, we obtain
\begin{equation*}
	-\iint_{Q} 	\im \varphi \partial_t v \df x \df t - \iint_{\partial Q} v |x|^{\alpha} z \df S \df t + \iint_{Q} |x|^{\alpha} \nabla  \varphi \cdot \nabla v \df x \df t = \iint_{Q} f v \df x \df t.
\end{equation*}
Therefore, by Green's identity, $z$ is the normal trace of $\varphi$, i.e., $z=\frac{\partial\varphi}{\partial\nu}$.
By the uniqueness of the normal trace, this limit is independent of the choice of the approximating sequences.

Now, let $\vp^0\in D(\mcA^\f{3}{2})$ and $f=0$. 

Since $\vp$ is the solution of \eqref{12.14.1}, i.e.,  $\vp=\sum_{n=1}^\iy e^{\im \la_n t}\vp_n^0\Phi_n$ with $\vp_n^0=(\vp^0,\Phi_n)_{L^2(\Om)}$ for all $n\in\N^*$, and 
\begin{equation*}
\|\varphi^0\|^2_{D(\mathcal A^{\frac 32})}	= \sum_{n=1}^{\infty} \lambda_n^3 |\varphi_n^0|^2 < \infty \ \Rightarrow \|\partial_t \varphi (t) \|^2_{D(\mathcal A^{\frac 12})} \leq \sum_{n=1}^{\infty} \lambda_n^3 |\varphi_n^0|^2 = \|\varphi^0\|^2_{D(\mathcal A^{\frac 32})} < \infty,
\end{equation*}
this implies $\partial_t \varphi(0) = \im \mathcal A \varphi^0$ and then $\partial_t \varphi $  is a weak solution of \eqref{12.14.1} with respect to $(\im \mcA\vp^0,0)$. Together with this and \eqref{04.12.3}, we get
\begin{equation}\label{26.09.18.02}
\begin{split}
\iint_{\pt Q}w\left|\partial_t \f{\pt \varphi}{\pt\nu}\right|^2\df S\df t=	\iint_{\pt Q}w\left|\f{\pt(\partial_t \varphi)}{\pt\nu}\right|^2\df S\df t\leq C \|\mathcal A\vp^0\|^2_{H_0^1(\Om;w)} = C\|\varphi^0\|^2_{D(\mathcal A^{\frac32})} .
\end{split}
\end{equation}
Together with \eqref{04.12.3} and \eqref{26.09.18.02}, we get \eqref{04.11.12}.

{\it Step 8}. We complete the proof of this theorem.

From Step 1 to Step 6, we get the existence of the weak solution of the equation \eqref{12.14.1}. \eqref{04.10.5} and \eqref{04.12.3} together yield \eqref{04.07.3}. Step 7  implies both \eqref{04.07.4} and  \eqref{04.11.12}. 
The results \eqref{04.10.13}   follow directly from Step 5.  We complete the proof of this theorem. 
\end{proof}

\begin{proposition}\label{04.12.P1}
	Let $\vp^0\in H_0^1(\Om;w)$ and $f\in L^2(0,T;H_0^1(\Om;w))$. Then 
	\begin{equation}\label{04.12.5}
		\vp\in L^2(0,T; H_0^1(\Om;w))\cap H^1(0,T; H^{-1}(\Om;w))
	\end{equation}
	is a weak solution of \eqref{12.14.1} with respect to $(\vp^0,f)$ if and only if 
	\begin{equation}\label{04.12.6}
		\vp \in L^2(0,T; H_0^1(\Om;w))
	\end{equation}
	satisfies 
	\begin{equation}\label{04.12.4}
		\begin{split}
			-\im \iint_Q \vp \pt_t\ol \psi\df x\df t+\iint_Q w\nabla \vp\cdot \nabla \ol \psi\df x\df t=\iint_Q f\ol \psi\df x\df t+\im \int_\Om \vp^0\ol \psi(0)\df x, 
		\end{split}
	\end{equation}
	where $\psi\in C^\iy(\ol Q)$ such that $\supp \psi(t)\s\Om$ for all $t\in [0,T]$ and $\psi(T)=0$. 
\end{proposition}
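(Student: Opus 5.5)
The plan is to prove the two implications separately. The forward direction is an integration by parts in time. The converse recovers $\pt_t\vp$ as an element of $L^2(0,T;H^{-1}(\Om;w))$ from the integral identity, then identifies the initial datum. Throughout, the pairing $(\cdot,\cdot)_{H^{-1}(\Om;w),H_0^1(\Om;w)}$ is the complex (sesquilinear) one, and $H_0^1(\Om;w)\hra L^2(\Om)\hra H^{-1}(\Om;w)$ is a Gelfand triple by Lemma \ref{08.15.L4} and Remark \ref{04.09.R1}(3).

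\emph{Necessity.} Let $\vp$ be a weak solution. Take $\psi$ as in the statement; then $\psi\in C^1([0,T];H_0^1(\Om;w))$, since $\psi(t)\in C_0^\iy(\Om)$. Test Definition \ref{04.07.D1}(i) with $v=\psi(t)$ and integrate over $(0,T)$. Because $\vp\in L^2(0,T;H_0^1)\cap H^1(0,T;H^{-1})$, the result \cite[Theorem 3, Chapter 5.9.2]{Evans} gives $\vp\in C([0,T];L^2(\Om))$ and the product rule
\begin{equation*}
\f{\df}{\df t}(\vp,\psi)_{L^2(\Om)}=(\pt_t\vp,\psi)_{H^{-1},H_0^1}+(\vp,\pt_t\psi)_{L^2(\Om)}.
\end{equation*}
Integrating this over $(0,T)$ and using $\psi(T)=0$ and $\vp(0)=\vp^0$ moves the time derivative onto $\psi$. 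The boundary term produced is $\im\int_\Om\vp^0\ol\psi(0)\df x$, and the result is exactly \eqref{04.12.4}, since $(\vp,\psi)_{H_0^1(\Om;w)}=\int_\Om w\nabla\vp\cdot\nabla\ol\psi\df x$.

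\emph{Sufficiency.} Let $\vp\in L^2(0,T;H_0^1(\Om;w))$ satisfy \eqref{04.12.4}.

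First take $\psi(x,t)=\theta(t)\ol{\eta(x)}$ with $\theta\in C_0^\iy(0,T)$ and $\eta\in C_0^\iy(\Om)$. This gives
\begin{equation*}
-\im\int_0^T(\vp(t),\eta)\,\theta'(t)\df t=\int_0^T\bigl[(f(t),\eta)-(\vp(t),\eta)_{H_0^1(\Om;w)}\bigr]\theta(t)\df t,
\end{equation*}
where $(\cdot,\cdot)$ denotes the $L^2(\Om)$ inner product.

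Define $g(t)\in H^{-1}(\Om;w)$ by $\langle g(t),\eta\rangle=(f(t),\eta)-(\vp(t),\eta)_{H_0^1(\Om;w)}$. Then $g\in L^2(0,T;H^{-1}(\Om;w))$, with $\|g(t)\|_{H^{-1}}\le C\|f(t)\|_{L^2}+\|\vp(t)\|_{H_0^1}$ by the Poincar\'e inequality \eqref{08.15.10}. The identity above says that $\im\pt_t\vp=g$ in the distributional sense, tested on the dense set $C_0^\iy(\Om)\subset H_0^1(\Om;w)$, which is dense by \eqref{04.07.2}.

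Hence $\vp\in H^1(0,T;H^{-1}(\Om;w))$, and Definition \ref{04.07.D1}(i) holds for a.e. $t$. To get this for all $v\in H_0^1$, I use the separability of $H_0^1(\Om;w)$: take a countable dense subset of $C_0^\iy(\Om)$ and discard a single null set of times.

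\emph{Initial condition.} This is the only delicate point. By the first step, $\vp\in C([0,T];L^2(\Om))$. Now take $\psi=\theta(t)\ol{\eta(x)}$ with $\theta\in C^\iy[0,T]$, $\theta(T)=0$ and $\theta(0)=1$. Repeat the integration by parts of the necessity part, now using the equation already established. This yields \eqref{04.12.4} with $\vp^0$ replaced by $\vp(0)$.

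Subtracting the two identities gives $\im(\vp(0)-\vp^0,\eta)_{L^2(\Om)}=0$ for all $\eta\in C_0^\iy(\Om)$. By density in $L^2(\Om)$, this forces $\vp(0)=\vp^0$.

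I expect the main care to be needed in keeping track of complex conjugations and the sesquilinear pairing, so that the sign $+\im\int_\Om\vp^0\ol\psi(0)$ comes out correctly.
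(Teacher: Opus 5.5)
Your proof is correct. The necessity part is the same integration by parts in time as in the paper. The recovery of $\pt_t\vp=-\im f+\im\mcA\vp\in L^2(0,T;H^{-1}(\Om;w))$ from test functions vanishing at $t=0$ is also essentially the paper's step. The paper phrases it via the isomorphism $\mcA\colon L^2(0,T;H_0^1(\Om;w))\to L^2(0,T;H^{-1}(\Om;w))$ of Corollary \ref{04.09.C1}, while you build the functional $g$ directly using the Poincar\'e inequality. Your separability remark is unnecessary: equality in $L^2(0,T;H^{-1}(\Om;w))$ already gives equality in $H^{-1}(\Om;w)$ for a.e.\ $t$, simultaneously for all $v$.

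The genuine difference is how $\vp(0)=\vp^0$ is identified. The paper tests with $\zeta(t)v(x)$, $\zeta(T)=0$, and lets $v\to\Phi_n$ in $H_0^1(\Om;w)$. This yields, for each coefficient $\vp_n(t)=(\vp(t),\Phi_n)_{L^2(\Om)}$, the ODE $\im\vp_n'+\la_n\vp_n=f_n$ with $\vp_n(0)=\vp_n^0$. The paper then notes that the partial sums $\sum_{n\le m}\vp_n\Phi_n$ are exactly the Galerkin approximants of Theorem \ref{04.07.T1}, so $\vp$ is the weak solution constructed there. You instead use $\vp\in C([0,T];L^2(\Om))$, rerun the necessity computation to get \eqref{04.12.4} with $\vp(0)$ in place of $\vp^0$, and subtract.

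Your route is shorter and purely variational: it needs only the Gelfand triple and the product rule, not the eigenbasis or the Galerkin construction. The paper's route ties $\vp$ explicitly to the spectral/Galerkin solution, but that extra information is not needed here.

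One small slip: with $\theta$ real, the choice $\psi=\theta\,\ol\eta$ produces $(\vp(t),\ol\eta)_{L^2(\Om)}$ rather than $(\vp(t),\eta)_{L^2(\Om)}$ in your identities, so take $\psi=\theta\eta$ instead. This is harmless, since $\eta\mapsto\ol\eta$ is a bijection of $C_0^\iy(\Om)$.
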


\begin{proof}
	We prove this proposition by the following steps. 
	
	{\it Step 1}. We prove the necessity. Let $\vp$ be defined as in \eqref{04.12.5}. 
	
	From \eqref{04.12.5}, we get \eqref{04.12.6}. From the assumption of $\psi$ and Definition \ref{04.07.D1} (i), we get
	\begin{equation*}
		\begin{split}
			\im (\pt_t\vp, \psi)_{H^{-1}(\Om;w),H_0^1(\Om;w)} + (\vp, \psi)_{H_0^1(\Om;w)} =(f,\psi)_{L^2(\Om)}
		\end{split}
	\end{equation*}
	for a.e.~$t\in [0,T]$, and 
	\begin{equation}\label{04.12.7}
		\vp\in C([0,T]; L^2(\Om)). 
	\end{equation}
	Integrating on $[0,T]$, we obtain 
	\begin{equation*}
		\im \int_0^T(\pt_t\vp, \psi)_{H^{-1}(\Om;w), H_0^1(\Om;w)} \df t+\int_0^T(\vp,\psi)_{H_0^1(\Om;w)}  \df t=\int_0^T(f,\psi)_{L^2(\Om)}  \df t. 
	\end{equation*}
	Integration by parts with respect to $t\in [0,T]$, from \eqref{04.12.7}, we have
	\begin{equation*}
		-\im \int_0^T(\vp,\partial_t \psi)_{L^2(\Om)}  \df t+\int_0^T(\vp,\psi)_{H_0^1(\Om;w)}\df t=\int_0^T(f,\psi)_{L^2(\Om)}\df t+\im (\vp(0),\psi(0))_{L^2(\Om)}. 
	\end{equation*}
	i.e., \eqref{04.12.4} holds. 
	
	{\it Step 2}. We prove the sufficiency. Let \eqref{04.12.6} and \eqref{04.12.4} hold.
	
	Let $\psi\in C_0^\iy(Q)$. 
	It is obvious that $f\in L^2(0,T; H^{-1}(\Om;w))$, and 
	\begin{equation*}
		 \mcA: L^2(0,T; H_0^1(\Om;w))\ra L^2(0,T; H^{-1}(\Om;w)) 
	\end{equation*}
	is an isometric linear isomorphism by Corollary \ref{04.09.C1}. 
	Then, from \eqref{04.12.4}, we get
	\begin{equation*}
		-\im \iint_Q \vp\pt_t\ol\psi\df x\df t+\iint_Q w\nabla\vp\cdot \nabla\ol\psi\df x\df t=\iint_Q f\ol\psi\df x\df t. 
	\end{equation*}
	i.e., 
	\begin{equation}\label{04.12.8}
		\im \int_0^T( \pt_t\vp, \psi)\df t+\int_0^T( \mcA\vp, \psi)_{H^{-1}(\Om;w), H_0^1(\Om;w)}\df t=\int_0^T( f, \psi)_{H^{-1}(\Om;w), H_0^1(\Om;w)}\df t, 
	\end{equation}
	this shows that $\pt_t\vp=-\im f+\im \mcA\vp\in L^2(0,T; H^{-1}(\Om;w))$. Hence, $\vp\in H^1(0,T; H^{-1}(\Om;w))$. 
	
	Now, we prove $\vp$ is a solution of \eqref{12.14.1}. 
	
	For fixed $t\in (0,T)$, taking $\de\in (0, \f{1}{4}\min \{t,T-t\})$, for each $\e\in (0,\de)$, choosing $\zeta=\zeta(t)\in C^\iy(\R),0\leq \zeta\leq 1$ such that 
	\begin{equation*}
		\zeta=1 \mbox{ on } (t-\de,t+\de), \quad \zeta=0 \mbox{ on } (-\iy, t-\de-\e)\cup (t+\de+\e,+\iy). 
	\end{equation*}
	For each $v\in C_0^\iy(\Om)$,   since $\zeta v\in C_0^\iy(Q)$, we get
	\begin{equation*}
		\begin{split}
			\im \int_0^T\zeta ( \pt_t\vp, v)_{H^{-1}(\Om;w), H_0^1(\Om;w)}\df t+\int_0^T\zeta (\vp, v)_{H_0^1(\Om;w)}\df t=\int_0^T\zeta (f,v)_{L^2(\Om)}\df t, 
		\end{split}
	\end{equation*}
	letting $\e\ra 0^+$, we obtain 
	\begin{equation*}
		\im \int_{t-\de}^{t+\de}( \pt_t\vp, v)_{H^{-1}(\Om;w), H_0^1(\Om;w)}\df t+\int_{t-\de}^{t+\de}(\vp, v)_{H_0^1(\Om;w)}\df t=\int_{t-\de}^{t+\de}(f,v)_{L^2(\Om)}\df t. 
	\end{equation*}
	From the Lebesgue point theorem, we get
	\begin{equation*}
		\im (\pt_t\vp, v)_{H^{-1}(\Om;w), H_0^1(\Om;w)}+(\vp,v)_{H_0^1(\Om;w)}=(f,v)_{L^2(\Om)}
	\end{equation*}
	for a.e.~$t\in [0,T]$. This implies that Definition \ref{04.07.D1} (i), since  $C_0^\iy(\Om)$ is dense in $H_0^1(\Om;w)$. 
	
	Let $v\in C_0^\iy(\Om)$. For each $\zeta=\zeta(t)\in C^\iy(\R)$ with $\zeta(T)=0$, from \eqref{04.12.4}, we get
	\begin{equation*}
		-\im\iint_Q \vp \zeta' \ol v\df x\df t+\iint_Q \zeta w\nabla\vp \cdot \nabla \ol v\df x\df t=\iint_Q \zeta f\ol v\df x\df t+\im\zeta(0)\int_\Om \vp(0)\ol v\df x.
	\end{equation*}
	Letting $v\ra \Phi_n\ (n\in\N^*)$ in $H_0^1(\Om;w)$, then
	\begin{equation*}
		-\im \int_0^T \zeta' \vp_n(t)\df t+\la_n\int_0^T \zeta \vp_n(t)\df t=\int_0^T \zeta f_n(t)\df t+\im \zeta(0)\vp_n^0, 
	\end{equation*}
	where $\vp_n(t)=(\vp,\Phi_n)_{L^2(\Om)}$ and $f_n(t)=(f,\Phi_n)_{L^2(\Om)}$ for all $n\in\N^*$. 
	This shows that $\vp_n(t)$ is a solution of the following ordinary differential equation
	\begin{equation*}
		\begin{cases}
			\im \f{\df}{\df t}\vp_n(t)+\la_n\vp_n(t)=f_n(t),& \mbox{for } t\in [0,T],\\
			\vp_n(0)=\vp_n^0. 
		\end{cases}
	\end{equation*}
	From Theorem \ref{04.07.T1} (see \eqref{04.10.4}) we get $\sum_{n=1}^m \vp_n(t)\Phi_n(x)$ converges weakly to the solution of \eqref{12.14.1} in $L^2(0,T; H_0^1(\Om;w))$ as $m\ra\iy$. Note that $\sum_{n=1}^m\vp_n(t)\Phi_n(x)\ra \sum_{n=1}^\iy \vp_n(t)\Phi_n(x)=\vp$ by \eqref{04.12.6} as $m\ra\iy$, hence $\vp$ is the weak solution of \eqref{12.14.1}. This shows that $\vp(0)=\vp^0$. i.e., Definition \ref{04.07.D1} (ii) holds. We complete the proof of this proposition. 
\end{proof}

\section{Approximation}\label{S3}
In this section, we construct a sequence of Schr\"odinger equations with uniformly elliptic spatial operators that approximate the degenerate Schr\"odinger equation \eqref{12.14.1}.

Here, we take $\e\in (0,R)$. We note that $w_{\epsilon} = w$ on $\Omega - B_R$.

\begin{lemma}\label{12.17.L1}
	There exists a $C^{2,1}$ function $\psi_\e: \mathbb{R}\to\mathbb{R}$ with $\e\in (0,R )$ such that
	\begin{equation*}
		\psi_\e(x)=|x| \quad \text{for } |x|\geq \e, \quad |x|\leq \psi_\e(x)\leq \e \quad \text{on } [-\e, \e],
	\end{equation*}
	and
	\begin{equation}\label{12.16.1}
		\psi_\e\geq \frac{3\e}{8} \quad \text{on } \mathbb{R}, \quad |\psi_\e'|\leq C   \text{ on } \mathbb{R}, \quad |\psi_\e''|\leq \frac{C}{\e},  \text{ and } |\psi_\e'''|\leq \frac{C}{\e^2}  \text{ on } \mathbb{R},
	\end{equation}
	where the constant $C>0$ are absolute.
\end{lemma}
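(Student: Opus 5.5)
We will construct $\psi_\e$ by scaling a single fixed profile. Set $\psi_\e(x)=\e\,\psi(x/\e)$, where $\psi:\R\to\R$ is one $C^{2,1}$ function, independent of $\e$, with
\begin{equation*}
\psi(s)=|s| \text{ for } |s|\geq 1,\quad |s|\leq \psi(s)\leq 1 \text{ on } [-1,1],\quad \psi\geq \tfrac38 \text{ on } \R,
\end{equation*}
and with $\psi',\psi'',\psi'''$ bounded (for $\psi'''$ we use the a.e.\ derivative, which is bounded because $\psi''$ is Lipschitz). Scaling gives $\psi_\e(x)=|x|$ for $|x|\geq\e$, $|x|\leq\psi_\e\leq\e$ on $[-\e,\e]$, $\psi_\e\geq 3\e/8$, and
\begin{equation*}
\psi_\e'(x)=\psi'(x/\e),\quad \psi_\e''(x)=\e^{-1}\psi''(x/\e),\quad \psi_\e'''(x)=\e^{-2}\psi'''(x/\e).
\end{equation*}
These are exactly the bounds in \eqref{12.16.1}, with $C=\max(\|\psi'\|_\iy,\|\psi''\|_\iy,\|\psi'''\|_\iy)$, which is absolute. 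The whole lemma therefore reduces to building $\psi$ on $[-1,1]$.

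For the profile, take $\psi$ even and, on $[-1,1]$, a polynomial in $s^2$:
\begin{equation*}
p(s)=a+bs^2+cs^4+ds^6 .
\end{equation*}
Evenness makes $\psi$ smooth at $s=0$. At $s=1$ we need $C^2$ matching with $|s|$, which gives three linear conditions:
\begin{equation*}
p(1)=1,\quad p'(1)=1,\quad p''(1)=0 .
\end{equation*}
Together with one free parameter, this lets the glued function be $C^2$, with $p'''$ bounded on $[-1,1]$ and $\psi'''=0$ outside. Hence $\psi''$ is piecewise $C^1$ and continuous, so it is Lipschitz and $\psi\in C^{2,1}$.

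A simpler explicit candidate is the quartic $p(s)=\tfrac38+\tfrac34 s^2-\tfrac18 s^4$. It satisfies $p(1)=1$, $p'(1)=\tfrac32-\tfrac12=1$ and $p''(1)=\tfrac32-\tfrac32=0$. Its minimum is $p(0)=3/8$, which matches the constant in the statement and suggests this is the intended choice.

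It remains to check $|s|\leq p(s)\leq 1$ on $[0,1]$. Since $p'(s)=\tfrac32 s-\tfrac12 s^3\geq 0$ there, $p$ is increasing, so $p\leq p(1)=1$. For the lower bound, set
\begin{equation*}
q(s)=p(s)-s=\tfrac38-s+\tfrac34 s^2-\tfrac18 s^4 .
\end{equation*}
Then $q(1)=0$, $q'(1)=0$ and $q''(s)=\tfrac32-\tfrac32 s^2\geq 0$ on $[0,1]$. So $q$ is convex on $[0,1]$ with a double zero at $s=1$, which gives $q\geq 0$ there.

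The one step needing real verification is this inequality $|s|\leq\psi(s)$ near the gluing point, and the convexity argument above settles it. Everything else is bookkeeping under the scaling.
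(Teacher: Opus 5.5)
Your proposal is correct and is essentially the paper's proof: your rescaled profile $\e\,p(x/\e)$ is exactly the paper's quartic $\frac{3\e}{8}+\frac{3}{4\e}x^2-\frac{1}{8\e^3}x^4$ on $[-\e,\e]$, obtained from the same $C^2$ matching conditions at $\pm\e$. Your scaling argument for the derivative bounds and your convexity check that $p(s)-s\geq 0$ (indeed $p(s)-s=\frac18(1-s)^3(3+s)$) supply the verification that the paper leaves as ``this function satisfies the required properties.''
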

\begin{proof}
	We consider the following polynomial function
	\begin{equation*}
		\psi_\e(x)=\sum_{i=0}^4a_ix^{4-i},
	\end{equation*}
	where $a_0, \cdots, a_4$ are constants to be determined. We impose the conditions
	\begin{equation*}
		\begin{split}
			\psi_\e(-\e)=\e, \quad \psi_\e(\e)=\e, \quad \psi_\e'(-\e)=-1, \quad \psi_\e'(\e)=1, \quad \psi_\e''(\pm \e)=0,
		\end{split}
	\end{equation*}
	which yield the solution
	\begin{equation}\label{12.20.1}
		\psi_\e(x)=\frac{3\e}{8}+\frac{3}{4\e}x^2-\frac{1}{8\e^3}x^4, \quad x\in[-\e,\e].
	\end{equation}
	This function satisfies the required properties.
\end{proof}
\begin{remark}\label{06.07.R1}
Let $x\in\mathbb{R}^N$, and consider the function $\psi_\e(x)=\psi_\e(|x|)$ defined in Lemma  \ref{12.17.L1}. We have (for all $i,j,k=1,\cdots, N$)
	\begin{equation}\label{12.18.1}
		\begin{split}
			\f{\pt \psi_\e}{\pt x_i}
			&=\frac{3}{2\e}x_i-\frac{1}{2\e^3}|x|^2x_i, \\
			\frac{\partial^2 \psi_\e}{\partial x_i\partial x_j}
			&=\delta_{ij}\left[\frac{3}{2\e}-\frac{1}{2\e^3}|x|^2\right]-\frac{1}{\e^3}x_ix_j, \\
			\f{\pt^3\psi_\e}{\pt x_i\pt x_j\pt x_k}
			&=-\f{1}{\e^3}\left(\de_{ij}x_k+\de_{ik}x_j+\de_{jk}x_i\right)
		\end{split}
	\end{equation}
	on $B_\e$,  and
	\begin{equation}\label{12.18.2}
		\begin{split}
			&\f{\pt \psi_\e}{\pt x_i}=|x|^{-1}x_i, \quad \frac{\partial^2\psi_\e}{\partial x_i\partial x_j}=\delta_{ij}|x|^{-1}-x_ix_j|x|^{-3}, \\ &\f{\pt^3\psi_\e}{\pt x_i\pt x_j\pt x_k}=-|x|^{-3}\left(\de_{ij}x_k+\de_{ik}x_j+\de_{jk}x_i\right)+3|x|^{-5}x_ix_jx_k
		\end{split}
	\end{equation}
on $\Omega-B_\e$. Here and in what follows, we denote $\de_{ij}, \de^{ij}$ and $\de_i^j$ by the Kronecker delta symbol, i.e., they are equal to $1$ when $i=j$ and equal to $0$ when $i\neq j$. 
\end{remark}

Denote
\begin{equation}\label{11.14.1}
	\begin{split}
		w_\e(x)= (\psi_\e(x))^\al=(\psi_\e(|x|))^\al, \quad x\in\Omega.
	\end{split}
\end{equation} 
Direct computation yields
\begin{equation}\label{11.25.1}
	\f{\pt w_\e}{\pt x_i}=\al \psi_\e^{\al-1}\f{\pt \psi_\e}{\pt x_i}=\al w_\e \psi_\e^{-1}\f{\pt\psi_\e}{\pt x_i}, \mbox{ and } \f{\pt w_\e^{-1}}{\pt x_i}=-\al \psi_\e^{-\al-1}\f{\pt\psi_\e}{\pt x_i}=-\al w_\e^{-1}\psi_\e^{-1}\f{\pt\psi_\e}{\pt x_i}.
\end{equation} 

Replace $w$ by $w_\e$ for each $\e\in (0,R )$, by the same argument as Section \ref{S2}, we can define the Hilbert spaces 
\begin{equation*}
	H_0^1(\Om;w_\e), \  H^1(\Om;w_\e), \mbox{ and } H^2(\Om;w_\e). 
\end{equation*}
Note that $H_0^1(\Om;w_\e)=H_0^1(\Om)$ and $H^2(\Om;w_\e)=H^2(\Om)$, then  $H^{-1}(\Om;w_\e)=H^{-1}(\Om)$. Denote 
\begin{equation*}
	\mcA_\e u=-\Div(w_\e\nabla u), \mbox{ with } D(\mcA_\e)=H^2(\Om;w_\e)\cap H_0^1(\Om;w_\e)=H^2(\Om)\cap H_0^1(\Om). 
\end{equation*}

\begin{lemma}\label{08.16.L2}
	Let $z\in H_0^1(\Omega;w_\e)$. Then,
	\begin{equation}\label{08.20.1}
		(N+\al-2)^2\int_\Om \psi_\e^{\al-2}|z|^2\df x\leq 4\int_\Om \psi_\e^\al |\nabla z|^2\df x.
	\end{equation}
	Moreover,   
	\begin{equation}\label{08.31.10}
		\int_\Om |z|^2\df x\leq \f{4M^{2-\al}}{(N+\al-2)^2}\int_\Om \psi_\e^\al |\nabla z|^2\df x.
	\end{equation}
\end{lemma}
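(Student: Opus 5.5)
We prove \eqref{08.20.1} by the classical vector-field (Hardy-type) argument, with $|x|$ replaced by $\psi_\e$. Since $H_0^1(\Om;w_\e)=H_0^1(\Om)$ and both sides of \eqref{08.20.1} are continuous in the $H_0^1(\Om)$ norm, it suffices to treat $z\in C_0^\iy(\Om)$. Here we use that $\psi_\e\geq \f{3\e}{8}$ and that $\psi_\e$ is bounded on $\Om$, so the weights $\psi_\e^{\al}$ and $\psi_\e^{\al-2}$ are bounded above and below on $\Om$. As in the proof of Lemma \ref{08.15.L1}, we write $z=z_1+\im z_2$ and use $|\nabla z|^2=|\nabla z_1|^2+|\nabla z_2|^2$. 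This reduces everything to real-valued $z$.

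For real $z\in C_0^\iy(\Om)$, consider the vector field $F=\psi_\e^{\al-1}\nabla\psi_\e$. Integration by parts gives
\begin{equation*}
\int_\Om z^2\Div F\df x=-2\int_\Om z\,\psi_\e^{\al-1}\nabla\psi_\e\cdot\nabla z\df x .
\end{equation*}
The key step is the pointwise lower bound
\begin{equation*}
\Div F\geq (N+\al-2)\psi_\e^{\al-2}\quad\mbox{in }\Om,
\end{equation*}
to be checked on the two regions separately.

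On $\Om-B_\e$ we have $\psi_\e=|x|$. By \eqref{12.18.2}, $\Div F=\Div(|x|^{\al-2}x)=(N+\al-2)|x|^{\al-2}$, so equality holds there.

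On $B_\e$, write $r=|x|$. Then
\begin{equation*}
\Div F=\psi_\e^{\al-2}\left[(\al-1)|\nabla\psi_\e|^2+\psi_\e\De\psi_\e\right].
\end{equation*}
Using \eqref{12.18.1}, $|\nabla\psi_\e|=\psi_\e'(r)$ with $\psi_\e'(r)=\f{3r}{2\e}-\f{r^3}{2\e^3}\in[0,1]$, and $\De\psi_\e=\psi_\e''+\f{N-1}{r}\psi_\e'$. It therefore suffices to show
\begin{equation*}
(\al-1)\psi_\e'^2+\psi_\e\psi_\e''+(N-1)\f{\psi_\e}{r}\psi_\e'\geq N+\al-2 .
\end{equation*}
This is the main obstacle, and I expect it to be delicate. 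Near $r=0$ the left side tends to $\f{3}{8}\cdot\f{3}{2}\cdot N$, which exceeds $N+\al-2$ since $\f{9}{16}N\geq N-2+\al$ requires $N$ small. For general $N$ the inequality may fail in the middle of $(0,\e)$. I would first try to verify it via monotonicity, using $\psi_\e\geq r$ and $\psi_\e'\leq 1$ together with the concavity of $\psi_\e'$. If it fails, I would fall back on replacing $\Div F\geq (N+\al-2)\psi_\e^{\al-2}$ by the weaker bound $\Div F\geq c\,\psi_\e^{\al-2}$ minus a nonnegative correction supported in $B_\e$. Alternatively, I would use the vector field $\psi_\e^{\al-2}x$ instead of $F$: then $\Div(\psi_\e^{\al-2}x)=\psi_\e^{\al-2}\left[N+(\al-2)\f{r\psi_\e'}{\psi_\e}\right]$, and since $\al-2<0$ and $0\leq r\psi_\e'\leq \psi_\e$ (a cleaner inequality to check from \eqref{12.20.1}), this is $\geq (N+\al-2)\psi_\e^{\al-2}$. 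I expect this choice to work, and adopt it. The right-hand side then becomes $-2\int z\psi_\e^{\al-2}x\cdot\nabla z$, and the estimate $|x|\leq\psi_\e$ from Lemma \ref{12.17.L1} bounds it by $2\int\psi_\e^{\al-1}|z||\nabla z|$.

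Given the pointwise bound, we combine it with Cauchy--Schwarz:
\begin{equation*}
(N+\al-2)\int_\Om\psi_\e^{\al-2}z^2\df x\leq 2\left(\int_\Om\psi_\e^{\al-2}z^2\df x\right)^{\f12}\left(\int_\Om\psi_\e^{\al}|\nabla z|^2\df x\right)^{\f12}.
\end{equation*}
Dividing and squaring yields \eqref{08.20.1}.

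For \eqref{08.31.10}, we use $\psi_\e\leq \max(\e,|x|)\leq M$ on $\Om$ and $\al-2<0$. These give $\psi_\e^{\al-2}\geq M^{\al-2}$, and inserting this into \eqref{08.20.1} gives the stated constant.
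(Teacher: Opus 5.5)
Your final argument is the paper's proof: the vector field $\psi_\e^{\al-2}x$, the pointwise bound $\Div(\psi_\e^{\al-2}x)\geq (N+\al-2)\psi_\e^{\al-2}$, Cauchy--Schwarz with $|x|\leq\psi_\e$, and $\psi_\e\leq M$ for the second estimate; the detour through $\psi_\e^{\al-1}\nabla\psi_\e$ can simply be dropped. The inequality $r\psi_\e'\leq\psi_\e$ that you left to check is exactly the identity the paper uses: on $B_\e$ one has $\psi_\e-x\cdot\nabla\psi_\e=\frac{3}{8\e^3}(\e^2-|x|^2)^2\geq 0$, and this difference vanishes on $\Om-B_\e$.
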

\begin{proof}
Let $z \in H_0^1(\Om;w_\e)$ be a real function. Since $w_\e\in C^{2,1}(\overline{\Omega})$, we have
	\begin{equation*}
		\begin{split}
			2\int_{\Omega} \psi_\e^{\al-2}z(x\cdot\nabla z)\df x
			&=\int_\Omega \psi_\e^{\al-2}x\cdot \nabla z^2\df x=\int_\Omega \Div\left(\psi_\e^{\al-2}z^2x\right)\df x-\int_\Omega z^2\Div\left(\psi_\e^{\al-2}x\right)\df x\\
			&=-(N+\al-2)\int_\Omega z^2\psi_\e^{\al-2} \df x-(2-\al)\int_{B_\e}z^2\psi_\e^{\al-3}[\psi_\e-x\cdot\nabla\psi_\e]\df x.
		\end{split}
	\end{equation*}
	Then,
	\begin{equation*}
		\begin{split}
			(N+\al-2)\int_\Omega z^2\psi_\e^{\al-2}\df x
			&\leq -2\int_\Omega \psi_\e^{\al-2}z(x\cdot \nabla z)\df x=-2\int_\Omega \left(\psi_\e^\f{\al-2}{2}z\right)\left(\psi_\e^\f{\al-2}{2}x\cdot\nabla z\right)\df x\\
			&\leq 2\left(\int_\Omega \psi_\e^{\al-2}z^2\df x\right)^\frac{1}{2}\left(\int_\Omega \psi_\e^{\al-2}|x|^2|\nabla z|^2\df x\right)^\frac{1}{2}\\
			&\leq 2\left(\int_\Omega \psi_\e^{\al-2}z^2\df x\right)^\frac{1}{2}\left(\int_\Omega |\nabla z|^2\psi_\e^\al\df x\right)^\frac{1}{2}
		\end{split}
	\end{equation*}
by $\al\in (0,2)$, and $\psi_\e-x\cdot\nabla\psi_\e=\frac{3}{8\e^3}(\e^2-|x|^2)^2\geq 0$ on $B_\e$ (see Remark \ref{06.07.R1}), and Lemma \ref{12.17.L1}. This shows  \eqref{08.20.1}.

	Next, by \eqref{08.20.1} and  $(\f{3\e}{8})^\al\leq w_\e\leq M^\al$ in $\Omega$, we get  \eqref{08.31.10}. 
	
	Finally, let $z\in H_0^1(\Om;w)$ be a complex function, by the same argument as the proof of Lemma \ref{08.15.L1}, we get \eqref{08.20.1} and \eqref{08.31.10} for the complex cases. We complete the proof of this lemma. 
\end{proof}

The following lemma is similar to Lemma \ref{08.15.L4}; in fact $H_0^1(\Om;w_\e)=H_0^1(\Om)$. 

\begin{lemma}\label{11.27.L2}
	Let $\e>0$. Then the embedding $H_0^1(\Om;w_\e)\hra L^2(\Om)$ is compact. 
\end{lemma}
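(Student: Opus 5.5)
The plan is to reduce the statement to the classical Rellich--Kondrachov theorem. For fixed $\e\in(0,R)$ the weight $w_\e$ is bounded above and below by positive constants on $\Om$. This makes $H_0^1(\Om;w_\e)$ the usual space $H_0^1(\Om)$ with an equivalent norm.

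First I will record the two-sided bound
\begin{equation*}
\left(\f{3\e}{8}\right)^\al\leq w_\e(x)=\psi_\e(|x|)^\al\leq M^\al,\qquad x\in\Om .
\end{equation*}
The lower bound comes from $\psi_\e\geq \f{3\e}{8}$ in \eqref{12.16.1}. The upper bound comes from $\psi_\e(|x|)=|x|\leq M$ for $|x|\geq\e$ and $\psi_\e\leq \e<M$ on $B_\e$, both by Lemma \ref{12.17.L1}. It follows that for every $u\in C_0^\iy(\Om)$,
\begin{equation*}
\left(\f{3\e}{8}\right)^\al\int_\Om|\nabla u|^2\df x\leq \int_\Om w_\e|\nabla u|^2\df x\leq M^\al\int_\Om|\nabla u|^2\df x .
\end{equation*}
The $L^2$ parts of the two norms coincide. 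So the $H^1(\Om;w_\e)$-norm and the standard $H^1(\Om)$-norm are equivalent on $C_0^\iy(\Om)$. Taking closures of $\mc D(\Om)$ in each norm, I get $H_0^1(\Om;w_\e)=H_0^1(\Om)$ as sets, with equivalent norms. This justifies the identification already asserted in the text.

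Next I will take a sequence $\{u_k\}$ bounded in $H_0^1(\Om;w_\e)$. By the previous step it is bounded in $H_0^1(\Om)$. Since $\Om$ is bounded, Rellich--Kondrachov applies in $H_0^1(\Om)$ with no boundary regularity needed. It gives a subsequence converging strongly in $L^2(\Om)$. Hence the embedding $H_0^1(\Om;w_\e)\hra L^2(\Om)$ is compact.

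For complex-valued functions I will apply the same argument to the real and imaginary parts separately, as in the proof of Lemma \ref{08.15.L1}. Alternatively, I can note directly that Rellich's theorem holds for $\Cc$-valued $H_0^1$.

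I do not expect a genuine obstacle. The only point to check is that the constants in the equivalence depend on $\e$, with the lower constant degenerating like $\e^\al$. This is harmless because compactness is claimed for each fixed $\e$, not uniformly in $\e$. Alternatively, one could invoke Lemma \ref{08.15.L4}-type arguments, but the reduction to the unweighted case is the cleanest route.
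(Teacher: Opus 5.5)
Your proposal is correct and takes essentially the same approach as the paper. The paper gives no separate proof; it only remarks that $H_0^1(\Om;w_\e)=H_0^1(\Om)$ and that the lemma is analogous to Lemma \ref{08.15.L4}. Your argument (two-sided bounds $(3\e/8)^\al\leq w_\e\leq M^\al$, equal closures of $\mc D(\Om)$, then Rellich--Kondrachov in $H_0^1(\Om)$ for each fixed $\e\in(0,R)$) is exactly that reasoning, written out in full.
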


\begin{notation}\label{02.05.N1}
Denote $\mcA_\e u=-\Div(w_\e\nabla u)$ for $\e\in (0,R )$. From  Lemma \ref{08.16.L2} we obtain that the non-degenerate partial differential operator $\mcA_\e$ has discrete point spectrum 
	\begin{equation}\label{11.27.5}
		0<\la_1^\e\leq \la_2^\e\leq \la_3^\e\leq  \cdots \ra +\iy, 
	\end{equation}
	i.e., $\la_n^\e\ (n\in\N^*)$ satisfies the following equation
	\begin{equation*}
		\begin{cases}
			\mcA_\e \Phi_n^\e=\la_n^\e \Phi_n^\e, & \mbox{in }\Om, \\
			\Phi_n^\e=0, &\mbox{in }\pt\Om. 
		\end{cases}
	\end{equation*}
In what follows, we denote by $\Phi_n^\e(x)$ the $n$th eigenfunction of $\mcA_\e$ with respect to the eigenvalue $\la_n^\e\ (n\in\N^*)$, and $\{\Phi_n^\e\}_{n\in\N^*}$ is the orthonormal basis of $L^2(\Om)$. Moreover, $\{\Phi_n^\e\}_{n\in\N}$ is an orthogonal subset of $H_0^1(\Om;w_\e)$. (See Theorem 7 (pp. 728) in Appendix D in \cite{Evans}.) 
\end{notation}

\begin{definition}\label{04.13.D2}
	Let $s\geq 0$. We denote 
	\begin{equation*}
		D(\mcA_\e^s)=\left\{u\in L^2(\Om)\colon \sum_{n=1}^\iy (\la_n^\e)^{2s}|u_n|^2<+\iy \mbox{ for } u=\sum_{n=1}^\iy u_n\Phi_n^\e  \right\}. 
	\end{equation*}
	i.e., $u\in D(\mcA_\e^s)$ if and only if $((\la_n^\e)^su_n)_{n\in\N^*}\in \ell^2(\Cc)$. 
	
	Let $s\in [-1,0]$. We denote $D(\mcA_\e^s)$ as the dual space of $D(\mcA_\e^{-s})$. 
\end{definition}

\begin{lemma}\label{04.13.L1}
	Let $u=\sum_{i=1}^\iy u_i \Phi_i^\e\in H_0^1(\Om;w_\e)$ with $u_i=(u,\Phi_i^\e)_{L^2(\Om)}$ for all $i\in\N^*$. We have 
	
	(i) $\nabla u=\sum_{i=1}^\iy u_i\nabla \Phi_i^\e$ and $\|u\|_{H_0^1(\Om;w_\e)}=(\sum_{i=1}^\iy |u_i|^2\la_i^\e)^\f{1}{2}$;
	
	(ii) $
	u\in H^2(\Om;w_\e)\Lra \sum_{i=1}^\iy |u_i|^2(\la_i^\e)^2<\iy$, 
	and
	\begin{equation*}
		\mcA_\e u=\sum_{i=1}^\iy u_i\la_i^\e\Phi_i^\e \mbox{ and } \|\mcA_\e u\|_{L^2(\Om)}=\left(\sum_{i=1}^\iy |u_i|^2(\la_i^\e)^2\right)^\f{1}{2}.
	\end{equation*}
\end{lemma}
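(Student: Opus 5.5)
The argument follows the proof of Lemma \ref{06.28.L1} verbatim, with $(w,\mcA,\la_n,\Phi_n)$ replaced by $(w_\e,\mcA_\e,\la_n^\e,\Phi_n^\e)$. The facts needed are all in place. The form $a_\e(u,v)=\int_\Om w_\e\nabla u\cdot\ol{\nabla v}\df x$ is coercive on $H_0^1(\Om;w_\e)=H_0^1(\Om)$ by \eqref{08.31.10}. The embedding into $L^2(\Om)$ is compact by Lemma \ref{11.27.L2}. Finally, $\{\Phi_n^\e\}$ is an $L^2$-orthonormal basis that is $a_\e$-orthogonal (Notation \ref{02.05.N1}). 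Since $w_\e\in C^{2,1}(\ol\Om)$ is bounded between $(3\e/8)^\al$ and $M^\al$, there is no degeneracy at all, so every step is in fact easier than in Lemma \ref{06.28.L1}.

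For (i), I will first check that $a_\e(\Phi_j^\e,\Phi_k^\e)=\la_j^\e\de_{jk}$, which is just the weak form of the eigenvalue equation. This shows that $\{(\la_k^\e)^{-1/2}\Phi_k^\e\}$ is orthonormal in $H_0^1(\Om;w_\e)$. For completeness, suppose $u\in H_0^1(\Om;w_\e)$ is $a_\e$-orthogonal to every $\Phi_k^\e$. Then Hermitian symmetry gives $0=a_\e(u,\Phi_k^\e)=\la_k^\e(u,\Phi_k^\e)_{L^2}$, and so $u=0$ because $\la_k^\e>0$ and $\{\Phi_k^\e\}$ is an $L^2$ basis. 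Expanding $u$ in this orthonormal basis gives coefficients $e_i=(\la_i^\e)^{-1/2}a_\e(u,\Phi_i^\e)=(\la_i^\e)^{1/2}u_i$. Parseval then yields the norm identity, and since the series converges in $H_0^1(\Om;w_\e)$, we get $\nabla u=\sum u_i\nabla\Phi_i^\e$ in $L^2(\Om;w_\e\df x)$.

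For (ii), necessity works as follows. Let $\vp^n=\sum_{i\le n}u_i\Phi_i^\e$ and compute $(\mcA_\e u,\mcA_\e\vp^n)_{L^2}=\sum_{i\le n}|u_i|^2(\la_i^\e)^2$. Here I move $\mcA_\e$ onto $\Phi_i^\e$ using the symmetry $(\mcA_\e u,\Phi_i^\e)=a_\e(u,\Phi_i^\e)=(u,\mcA_\e\Phi_i^\e)$; this is legitimate because $u\in H_0^1$ and $\Phi_i^\e\in H^2\cap H_0^1$. Combining this with $\|\mcA_\e\vp^n\|^2=\sum_{i\le n}|u_i|^2(\la_i^\e)^2$ and Cauchy--Schwarz gives the partial-sum bound by $\|\mcA_\e u\|^2$, uniformly in $n$. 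For sufficiency, assume $\sum|u_i|^2(\la_i^\e)^2<\iy$, so the series $g=\sum u_i\la_i^\e\Phi_i^\e$ converges in $L^2$. For any test function $\vp$, the $H_0^1$-convergence from (i) gives $\langle\mcA_\e u,\vp\rangle=a_\e(u,\vp)=\sum u_i\la_i^\e(\Phi_i^\e,\vp)_{L^2}=(g,\vp)$. Hence $\mcA_\e u=g\in L^2$, and the norm identity follows from Parseval. Standard elliptic regularity for the uniformly elliptic operator $\mcA_\e$ with $C^{2,1}$ coefficients on a $C^3$ domain additionally identifies $H^2(\Om;w_\e)\cap H_0^1$ with $H^2(\Om)\cap H_0^1(\Om)$, although that identification is not needed here.

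The only point requiring care is the symmetry step $(\mcA_\e u,\Phi_i^\e)=(u,\mcA_\e\Phi_i^\e)$ for a general $u\in H^2(\Om;w_\e)\cap H_0^1$. I would justify it through the form representation: $\mcA_\e$ is the self-adjoint operator associated with the closed coercive form $a_\e$, exactly as in Notation \ref{04.07.N1}. With that in hand, the remaining steps are routine.
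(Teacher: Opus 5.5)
Your proposal is correct and is essentially the paper's proof: the paper states only that the argument is the same as for Lemma \ref{06.28.L1}, and you carry out exactly that transcription with $(w,\mcA,\la_n,\Phi_n)$ replaced by $(w_\e,\mcA_\e,\la_n^\e,\Phi_n^\e)$. Your extra justification of the symmetry step $(\mcA_\e u,\Phi_i^\e)_{L^2(\Om)}=(u,\mcA_\e\Phi_i^\e)_{L^2(\Om)}$ fills in a detail the paper leaves implicit; you use the form representation, and the density of $C_0^\iy(\Om)$ in $H_0^1(\Om;w_\e)$ would serve equally well.
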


\begin{proof}
	The proof of this lemma is completely similar to Lemma \ref{06.28.L1}. 
\end{proof}

\begin{corollary}\label{04.13.C1}
	Let $s\in (-\iy,+\iy)$. Then, the operator 
	\begin{equation*}
		\mcA_\e: D(\mcA_\e^{s+1})\ra D(\mcA_\e^s)
	\end{equation*}
	is an isometric linear isomorphism. Moreover, if $u=\sum_{n=1}^\iy u_n\Phi_n^\e\in D(\mcA_\e^{s+1})$, then 
	\begin{equation*}
		\mcA_{\epsilon} u=\sum_{n=1}^\iy \la_n^\e u_n\Phi_n^\e. 
	\end{equation*}
\end{corollary}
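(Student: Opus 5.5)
The plan is to mimic the proof of Corollary \ref{04.09.C1}, replacing Lemma \ref{06.28.L1} by its non-degenerate analogue Lemma \ref{04.13.L1}. First I will fix the meaning of $D(\mcA_\e^s)$ for every real $s$ through coefficients. For $s\ge 0$ this is Definition \ref{04.13.D2}. For $s<0$ I will identify $D(\mcA_\e^s)=(D(\mcA_\e^{-s}))'$ with the space of formal series $u=\sum_n u_n\Phi_n^\e$, $u_n=\langle u,\Phi_n^\e\rangle$, satisfying $\sum_n(\la_n^\e)^{2s}|u_n|^2<\iy$. This identification is the usual Riesz-type one: the system $\{(\la_n^\e)^{-t}\Phi_n^\e\}$ is an orthonormal basis of $D(\mcA_\e^t)$ for $t\ge0$, so a continuous functional is determined by its values on the $\Phi_n^\e$. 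In every case the norm is $\|u\|_{D(\mcA_\e^s)}^2=\sum_n(\la_n^\e)^{2s}|u_n|^2$.

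Next, define $\mcA_\e$ on $D(\mcA_\e^{s+1})$ by $\mcA_\e u=\sum_n\la_n^\e u_n\Phi_n^\e$. Then
\[
\|\mcA_\e u\|_{D(\mcA_\e^s)}^2=\sum_n(\la_n^\e)^{2s}(\la_n^\e)^2|u_n|^2=\|u\|_{D(\mcA_\e^{s+1})}^2 ,
\]
so the map is an isometry. It is surjective because, for $v=\sum v_n\Phi_n^\e\in D(\mcA_\e^s)$, the element $u=\sum(\la_n^\e)^{-1}v_n\Phi_n^\e$ lies in $D(\mcA_\e^{s+1})$ and $\mcA_\e u=v$; here $\la_n^\e>0$ by \eqref{11.27.5}. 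Linearity is immediate, so the map is an isometric isomorphism.

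The remaining and main point is consistency: this spectral definition must agree with the differential operator $-\Div(w_\e\nabla\cdot)$ on each domain. For $s\ge0$ we have $D(\mcA_\e^{s+1})\s D(\mcA_\e)=H^2(\Om)\cap H_0^1(\Om)$, and Lemma \ref{04.13.L1}(ii) gives $\mcA_\e u=\sum u_n\la_n^\e\Phi_n^\e$ directly.

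For $s\in[-1,0)$, so that $s+1\in[0,1)$, I will interpret $\mcA_\e u$ for $u\in D(\mcA_\e^{s+1})$ as a distribution or functional and pair it with the $\Phi_n^\e$. When $s+1\ge \f12$, $u\in H_0^1(\Om;w_\e)$, and the form identity from Lemma \ref{04.13.L1}(i) gives
\[
\langle \mcA_\e u,\Phi_n^\e\rangle=\int_\Om w_\e\nabla u\cdot\nabla\Phi_n^\e\,\df x=\la_n^\e u_n .
\]
For smaller $s+1$, and for $s<-1$, I will define $\mcA_\e$ by density from finite sums, extending the isometry by continuity. Finite linear combinations of the $\Phi_n^\e$ are dense in every $D(\mcA_\e^t)$, and on them the differential and spectral definitions clearly coincide.

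I expect no real obstacle beyond this bookkeeping. The only care needed is to check that the pairing convention, namely the sesquilinear versus bilinear choice with real-valued eigenfunctions, is applied uniformly, so that the coefficient formula holds in the dual range as well.
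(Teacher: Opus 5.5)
Your proposal is correct and is essentially the paper's argument. The paper proves this corollary by declaring it completely analogous to Corollary \ref{04.09.C1}, i.e.\ a direct consequence of the coefficient characterization in Lemma \ref{04.13.L1} together with Definition \ref{04.13.D2}, which is exactly the isometry and surjectivity computation you carry out. Your extra bookkeeping fills in details the paper leaves implicit rather than changing the route: you define $D(\mcA_\e^s)$ for $s<-1$ by duality (a range Definition \ref{04.13.D2} does not actually cover, although the statement allows all real $s$), and you extend $\mcA_\e$ by continuity from finite sums.
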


\begin{proof}
	The proof of this corollary is completely analogous to that of Corollary \ref{04.09.C1}. 
\end{proof}

Now, we consider the following equation 
\begin{equation}\label{11.14.2}
	\begin{cases}
		\im \pt_{t}\vp_\e-\Div (w_\e\nabla \vp_\e)=f_\e, &\mbox{in } Q, \\
		\vp_\e=0, &\mbox{on }\pt Q, \\
		\vp_\e(0)=\vp_\e^0,  &\mbox{in }\Om, 
	\end{cases}
\end{equation}
where $w_\e$ is defined in \eqref{11.14.1}, $\vp_\e^0\in H_0^1(\Om;w_\e)$,  and  $f_\e\in L^2(0,T; H_0^1(\Om;w_\e))$. 

\begin{definition}\label{04.13.D1}
	We call 
	\begin{equation*}
		\vp_\e\in L^2(0, T; H_0^1(\Om;w_\e))\cap H^1(0,T; H^{-1}(\Om;w_\e))
	\end{equation*}
	is a weak solution of \eqref{11.14.2} with respect to $(\vp_\e^0, f_\e)$, provided
	
	(i) For each $v\in C_0^\iy(\Om)$ and a.e.~$t\in [0,T]$, 
	\begin{equation*}
		\begin{split}
			\im ( \pt_t\vp_\e, v)_{H^{-1}(\Om;w_\e), H_0^1(\Om;w_\e)}+(\vp_\e, v)_{H_0^1(\Om;w_\e)}=(f_\e,v)_{L^2(\Om)}, 
		\end{split}
	\end{equation*}
	
	(ii) $\vp_\e(0)=\vp_\e^0$. 
\end{definition}

\begin{theorem}\label{04.13.T1}
Let $\vp_\e^0\in H_0^1(\Om;w_\e)$ and $f_\e\in L^2(0,T; H_0^1(\Om;w_\e))$. Then there exists a unique weak solution
	\begin{equation*}
		\vp_\e\in L^2(0,T; H_0^1(\Om;w_\e))\cap H^1(0,T;H^{-1}(\Om;w_\e))
	\end{equation*}
to the equation \eqref{11.14.2} with respect to $(\vp_\e^0,f_\e)$,  and 
	\begin{equation}\label{04.13.1}
		\begin{split}
			&\esssup_{t\in [0,T]}\|\vp_\e(t)\|_{H_0^1(\Om;w_\e)}+\|\pt_t\vp_\e\|_{L^2(0,T; H^{-1}(\Om;w_\e))}+\left\|\f{\pt\vp_\e}{\pt\nu}\right\|_{L^2(\pt Q)}\\
			&\leq C\left(\|\vp_\e^0\|_{H_0^1(\Om;w_\e)}+\|f_\e\|_{L^2(0,T; H_0^1(\Om;w_\e))}\right), 
		\end{split}
	\end{equation}
where the positive constant $C$ depends only on $\al, N$ and $M$ and $T$. And if $f_\e=0$, then 
	\begin{equation}\label{04.13.2}
		\|\vp_\e(t)\|_{L^2(\Om)}=\|\vp_\e^0\|_{L^2(\Om)}, \mbox{ for all } t\in [0,T].
	\end{equation}
	
	Moreover, if we further assume $\vp_\e^0\in D(\mcA_\e)$ and $f_\e\in H^1(0,T; H_0^1(\Om;w_\e))$, then the weak solution 
	\begin{equation*}
		\vp_\e\in L^2(0,T; D(\mcA_\e))\cap H^1(0,T; L^2(\Om))
	\end{equation*}
	to the equation \eqref{11.14.2} with respect to $(\vp_\e^0,f_\e)$, 
	and 
	\begin{equation}\label{04.13.3}
		\begin{split}
			&\esssup_{t\in [0,T]}\left(\|\vp_\e(t)\|_{D(\mcA_\e)}+\|\vp_\e(t)\|_{H^2(\Om-B_{3R})}+\|\pt_t\vp_\e\|_{L^2(Q)}\right)\\
			&\leq C\left(\|\vp_\e^0\|_{D(\mcA_\e)}+\|f_\e\|_{H^1(0,T; H_0^1(\Om;w_\e))}\right), 
		\end{split}
	\end{equation}
	and if $\vp_\e^0\in D(\mcA_\e^\f{3}{2})$ and $f_\e=0$ then 
	\begin{equation}\label{04.13.4}
		\left\|\f{\pt\vp_\e}{\pt\nu}\right\|_{H^1(0,T; L^2(\pt\Om))}\leq C \|\vp_\e^0\|_{D(\mcA_\e^\f{3}{2})}, 
	\end{equation} 
	where the positive constant $C$ depends only on $\al, \Om-B_{R}$ and $T$. 
\end{theorem}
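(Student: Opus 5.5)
The plan is to repeat the proof of Theorem \ref{04.07.T1} verbatim, with $w$ replaced by $w_\e$, $\mcA$ by $\mcA_\e$, and $(\la_n,\Phi_n)$ by $(\la_n^\e,\Phi_n^\e)$ from Notation \ref{02.05.N1}. The only real work is checking that every constant is independent of $\e\in(0,R)$. We use the Galerkin approximation $\vp_\e^m=\sum_{n=1}^m\vp_{\e,n}^m(t)\Phi_n^\e$ with the ODEs $\im\pt_t\vp_{\e,n}^m+\la_n^\e\vp_{\e,n}^m=f_{\e,n}^m$.

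\emph{Energy estimates.} Multiplying by $\ol{\vp_{\e,n}^m}$ and by $\ol{\pt_t\vp_{\e,n}^m}$ gives the analogues of \eqref{04.07.13}, \eqref{04.10.3} and \eqref{04.10.2}. Here the spectral identities of Lemma \ref{04.13.L1} and Corollary \ref{04.13.C1} replace Lemma \ref{06.28.L1} and Corollary \ref{04.09.C1}. The lower bound on $\la_1$ in \eqref{04.10.7} is replaced by $\la_1^\e\ge (N+\al-2)^2/(4M^{2-\al})$, which is \eqref{08.31.10} of Lemma \ref{08.16.L2}. This bound is uniform in $\e$, so the constants depend only on $\al,N,M,T$.

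\emph{Passage to the limit.} The weak-star limit, the identification of the initial datum, uniqueness, and $L^2$ conservation \eqref{04.13.2} follow as in Steps 3--5 of Theorem \ref{04.07.T1}. For regularity we differentiate the Galerkin system in $t$ and argue as in Step 6. This gives $\pt_t\vp_\e\in L^\iy(0,T;L^2(\Om))$ and, from the equation, $\mcA_\e\vp_\e\in L^\iy(0,T;L^2(\Om))$, with $\e$-independent bounds.

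\emph{Estimate away from the origin.} For the $H^2(\Om-B_{3R})$ bound we use the cutoff $\zeta$ from \eqref{04.11.11}. Its support avoids $B_{2R}$, and there $w_\e=w$ because $\e<R$. Hence equation \eqref{04.11.9} for $\zeta\vp_\e$ is literally the same elliptic problem on $\Om-\ol B_R$, with coefficients independent of $\e$. Elliptic regularity then gives a constant depending only on $\al$, $\Om-B_R$ and $T$.

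\emph{Hidden regularity.} This is the step needing most care. We use the multiplier $(\zeta\textbf{n})\cdot\nabla\ol\vp_\e+\frac12\ol\vp_\e\Div(\zeta\textbf{n})$, which is again supported in $\Om-B_R$, where $w_\e=w$. So every term in \eqref{04.12.1}--\eqref{04.12.2} involves only $w$ and its derivatives on $\Om-B_R$. These terms are bounded by $\|\vp_\e\|_{L^\iy(0,T;H_0^1(\Om;w_\e))}$ and $\|f_\e\|$, giving \eqref{04.12.3} with an $\e$-independent constant. We then pass from smooth data to general data by density, as in Step 7. Since $\mcA_\e$ is uniformly elliptic with $C^{2,1}$ coefficients, $\vp_\e$ is genuinely in $H^2$. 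The normal trace is therefore classical, and this part is actually simpler than in the degenerate case. Finally, \eqref{04.13.4} follows by applying the same estimate to $\pt_t\vp_\e$, the solution with initial datum $\im\mcA_\e\vp_\e^0\in H_0^1(\Om;w_\e)$.

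The main obstacle is not any single estimate but making sure no constant secretly depends on $\e$ through $w_\e^{-1}$ or $\psi_\e''\sim \e^{-1}$. This is avoided because all multipliers and cutoffs are supported outside $B_R\supset B_\e$, and the global bounds use only the uniform Hardy--Poincaré inequality \eqref{08.31.10}.
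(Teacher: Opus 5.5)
Your proposal is correct and follows the paper's own argument. The paper likewise reruns the proof of Theorem \ref{04.07.T1}, substituting Lemma \ref{08.16.L2} for Lemma \ref{08.15.L1} and Lemma \ref{04.13.L1}, Corollary \ref{04.13.C1} for Lemma \ref{06.28.L1}, Corollary \ref{04.09.C1}, and notes that the cutoff and hidden-regularity multiplier live outside $B_R$, where $w_\e=w$. Your explicit bookkeeping of $\e$-independence is exactly what the paper's brief proof leaves implicit: the uniform lower bound on $\la_1^\e$ from \eqref{08.31.10}, and all localized terms supported where $w_\e=w\geq(2R)^\al$.
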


\begin{proof}
	The proof of this theorem is similar to that of Theorem \ref{04.07.T1}.
	
The estimates \eqref{04.13.1}, \eqref{04.13.3}, and \eqref{04.13.4} correspond to \eqref{04.07.3}, \eqref{04.07.4}, and \eqref{04.11.12}, respectively. Indeed, these estimates rely only on \eqref{04.10.5}, \eqref{04.11.7}, \eqref{04.11.8}, \eqref{04.11.10}, and \eqref{04.12.3}. Moreover, they depend only on Gronwall’s inequality, Lemma \ref{08.15.L1} (see \eqref{08.15.10}), Lemma \ref{06.28.L1}, Corollary \ref{04.09.C1}, and \cite[Theorem 2 in Chapter 5.9.2, p.~302]{Evans}.
	
Replacing Lemma \ref{08.15.L1} by Lemma \ref{08.16.L2} (Poincar\'e inequality), as well as replacing Lemma \ref{06.28.L1} and Corollary \ref{04.09.C1} by Lemma \ref{04.13.L1} and Corollary \ref{04.13.C1}, respectively, we obtain the estimates \eqref{04.13.1}, \eqref{04.13.3}, and \eqref{04.13.4} (the hidden-regularity holds in the region far away from $B_R$, where $w_{\epsilon} = w$). This completes the proof of the theorem.
\end{proof}

\begin{proposition}\label{04.13.P1}
	Let $\vp_\e^0\in H_0^1(\Om;w_\e)$ and $f_\e\in L^2(0,T;H_0^1(\Om;w_\e))$. Then 
	\begin{equation}\label{04.13.6}
		\vp_\e\in L^2(0,T; H_0^1(\Om;w_\e))\cap H^1(0,T; H^{-1}(\Om;w_\e))
	\end{equation}
	is a weak solution of \eqref{11.14.2} with respect to $(\vp_\e^0,f_\e)$ if and only if 
	\begin{equation}\label{04.13.7}
		\vp_\e \in L^2(0,T; H_0^1(\Om;w_\e))
	\end{equation}
	satisfies 
	\begin{equation}\label{04.13.8}
		\begin{split}
			-\im \iint_Q \vp_\e \pt_t\ol \psi\df x\df t+\iint_Q w_\e\nabla \vp_\e\cdot \nabla \ol \psi\df x\df t=\iint_Q f_\e\ol \psi\df x\df t+\im \int_\Om \vp_\e^0\ol \psi(0)\df x, 
		\end{split}
	\end{equation}
	where $\psi\in C^\iy(\ol Q)$ such that $\supp \psi(t)\s\Om$ for all $t\in [0,T]$ and $\psi(T)=0$. 
\end{proposition}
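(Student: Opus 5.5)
The plan is to transcribe the proof of Proposition \ref{04.12.P1}, replacing $w$ by $w_\e$, Lemma \ref{06.28.L1} by Lemma \ref{04.13.L1}, Corollary \ref{04.09.C1} by Corollary \ref{04.13.C1}, and Theorem \ref{04.07.T1} by Theorem \ref{04.13.T1}. Since $H_0^1(\Om;w_\e)=H_0^1(\Om)$ with equivalent norms (as $(\f{3\e}{8})^\al\leq w_\e\leq M^\al$), the argument is in fact easier than in the degenerate case. Throughout, the test functions $v\in C_0^\iy(\Om)$ are dense in $H_0^1(\Om;w_\e)$.

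\emph{Necessity.} Assume $\vp_\e$ satisfies \eqref{04.13.6} and is a weak solution. Then \eqref{04.13.7} holds trivially. The Lions--Magenes type embedding \cite[Theorem 3 in Chapter 5.9.2]{Evans} gives $\vp_\e\in C([0,T];L^2(\Om))$, and for a.e.\ $t$ the product rule
\[
\f{\df}{\df t}(\vp_\e,\psi)_{L^2(\Om)}=(\pt_t\vp_\e,\psi)_{H^{-1},H_0^1}+(\vp_\e,\pt_t\psi)_{L^2(\Om)}
\]
holds. Next we take $v=\psi(t)$ in Definition \ref{04.13.D1}(i); this is allowed since $\psi(t)\in C_0^\iy(\Om)$. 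Integrating over $[0,T]$ and integrating by parts in $t$, using $\psi(T)=0$ and $\vp_\e(0)=\vp_\e^0$, yields \eqref{04.13.8}.

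\emph{Sufficiency.} Assume \eqref{04.13.7} and \eqref{04.13.8}.

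\textbf{Step 1.} Testing with $\psi\in C_0^\iy(Q)$ gives, in the distributional sense, $\im\pt_t\vp_\e+\mcA_\e\vp_\e=f_\e$. By Corollary \ref{04.13.C1}, $\mcA_\e$ maps $L^2(0,T;H_0^1(\Om;w_\e))$ isometrically onto $L^2(0,T;H^{-1}(\Om;w_\e))$. Hence $\pt_t\vp_\e=\im(\mcA_\e\vp_\e-f_\e)\in L^2(0,T;H^{-1}(\Om;w_\e))$, which places $\vp_\e$ in the class \eqref{04.13.6}.

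\textbf{Step 2.} To recover the pointwise-in-time identity of Definition \ref{04.13.D1}(i), test with $\zeta(t)v(x)$, where $\zeta$ is a smooth cutoff of $(t-\de,t+\de)$. Letting the cutoff sharpen and applying the Lebesgue differentiation theorem gives the identity for a.e.\ $t$.

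\textbf{Step 3.} For the initial condition, test with $\zeta(t)v(x)$ where $\zeta(T)=0$ and $\zeta(0)$ is arbitrary. Comparing with the identity obtained from the necessity computation, which is now legitimate because $\vp_\e\in C([0,T];L^2(\Om))$, gives
\[
\zeta(0)\,(\vp_\e(0)-\vp_\e^0,v)_{L^2(\Om)}=0 \quad\text{for all } v\in C_0^\iy(\Om).
\]
Hence $\vp_\e(0)=\vp_\e^0$. Alternatively one can follow the paper and project onto $\Phi_n^\e$. This gives the scalar ODEs
\[
\im\,\f{\df}{\df t}\vp_{\e,n}+\la_n^\e\vp_{\e,n}=f_{\e,n},\qquad \vp_{\e,n}(0)=(\vp_\e^0,\Phi_n^\e)_{L^2(\Om)},
\]
and then invokes uniqueness from Theorem \ref{04.13.T1}.

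The only mildly delicate step is this initial-data identification, which requires the continuity $\vp_\e\in C([0,T];L^2(\Om))$ before the trace $\vp_\e(0)$ has meaning. That is why Step 1, establishing $H^1(0,T;H^{-1})$ regularity, must come first; after it, everything is routine because $w_\e$ is smooth and bounded below.
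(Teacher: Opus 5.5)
Your proposal is correct and matches the paper's proof, which only says to repeat the proof of Proposition~\ref{04.12.P1} with $w$ replaced by $w_\e$. That is exactly your transcription: necessity by integrating Definition~\ref{04.13.D1}(i) in time; sufficiency by reading off $\pt_t\vp_\e=\im(\mcA_\e\vp_\e-f_\e)\in L^2(0,T;H^{-1}(\Om;w_\e))$, then localizing in time with a cutoff and Lebesgue points. The only small deviation is how you recover $\vp_\e(0)=\vp_\e^0$: you compare the two time-integrated identities directly using $\vp_\e\in C([0,T];L^2(\Om))$, which is more self-contained than the paper's projection onto the eigenfunctions followed by identification with the Galerkin solution, and you also record the paper's variant.
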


\begin{proof}
	The proof of this proposition is similar to that of Proposition \ref{04.12.P1}. 
\end{proof}

\begin{lemma}\label{04.14.A1}
Let $N\geq2$ and $\alpha\in(0,2)$. Then
\begin{equation*}
C_0^\infty(\Omega)\subset D(\mathcal A^{\frac 32}) \quad\Longleftrightarrow\quad 3\alpha+N>4.
\end{equation*}
Moreover, for every fixed $\epsilon\in(0,R)$,
\begin{equation*}
C_0^\infty(\Omega)\subset D(\mathcal A_\epsilon^{\frac 32}),
\end{equation*}
without the restriction $3\alpha+N>4$.
\end{lemma}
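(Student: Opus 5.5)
The plan is to characterize $D(\mathcal A^{\frac32})$ concretely: $u\in D(\mathcal A^{\frac32})$ if and only if $u\in D(\mathcal A)$ and $\mathcal A u\in D(\mathcal A^{\frac12})=H_0^1(\Omega;w)$. This follows from Corollary \ref{04.09.C1} with $s=\frac12$ together with Remark \ref{04.09.R1}(3).

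For $u\in C_0^\infty(\Omega)$ we first check $u\in D(\mathcal A)$. We compute
\[
\mathcal A u=-|x|^\alpha\Delta u-\alpha|x|^{\alpha-2}x\cdot\nabla u .
\]
The second term is $O(|x|^{\alpha-1})$ near $0$, which lies in $L^2$ since $2(\alpha-1)+N>0$ for $N\ge2$, $\alpha>0$. The weak formulation against $v\in H_0^1(\Omega;w)$ holds after excising a small ball $B_\delta$. The boundary term is $O(\delta^{\alpha+N-1}\cdot\sup|v|)$, and it should be handled by a density argument with $v\in C_0^\infty$: it equals $\int_{\partial B_\delta}|x|^\alpha\partial_r u\,\bar v$, which tends to $0$ since $\alpha+N-1>0$. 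Hence $u\in D(\mathcal A)$ and $\mathcal A u$ is given by the formula above.

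Next we test whether $g:=\mathcal A u$ lies in $H_0^1(\Omega;w)$. Since $g$ is smooth away from $0$ with compact support, the only issue is $\int_{B_r}|x|^\alpha|\nabla g|^2\,dx<\infty$, plus membership in the closure of $C_0^\infty$. The worst term is
\[
\nabla\bigl(|x|^{\alpha-2}x\cdot\nabla u\bigr)=|x|^{\alpha-2}\bigl(\nabla u+D^2u\,x\bigr)+(\alpha-2)|x|^{\alpha-4}(x\cdot\nabla u)\,x ,
\]
which has size $\sim|x|^{\alpha-2}$ and is generically nonvanishing near $0$. Then
\[
\int_{B_r}|x|^{\alpha}|x|^{2\alpha-4}\,dx<\infty \iff 3\alpha-4+N>0 .
\]
This gives sufficiency. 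The $-|x|^\alpha\Delta u$ part contributes $|x|^{\alpha-1}$, whose weighted square is integrable under the weaker condition $3\alpha-2+N>0$.

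For membership in $H_0^1(\Omega;w)$ (not merely in $H^1(\Omega;w)$), I would approximate $g$ by $(1-\chi(x/\delta))g$ with a cutoff $\chi$. The error is $\int|x|^\alpha|\nabla\chi_\delta|^2|g|^2\lesssim\delta^{-2}\int_{B_\delta}|x|^{\alpha}|x|^{2\alpha-2}\,dx\sim\delta^{3\alpha-4+N}\to0$, under the same condition; mollification then finishes the argument.

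For necessity, assume $3\alpha+N\le4$ and take $u=x_1$ near $0$ (times a cutoff). Then $g=-\alpha|x|^{\alpha-2}x_1$ near $0$, and $|\nabla g|\ge c|x|^{\alpha-2}$ on a cone. So $\int|x|^\alpha|\nabla g|^2=\infty$, hence $g\notin H^1(\Omega;w)$ and $u\notin D(\mathcal A^{\frac32})$. One must make sure $\nabla g$ here is the distributional gradient; this holds because $g\in L^1_{loc}$ is smooth off the origin and $N\ge2$, so the origin carries no singular part provided $|x|^{\alpha-1}\in L^1_{loc}$.

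For $\mathcal A_\epsilon$: $w_\epsilon\in C^{2,1}(\overline\Omega)$ with $w_\epsilon\ge(3\epsilon/8)^\alpha$, so $\mathcal A_\epsilon u=-w_\epsilon\Delta u-\nabla w_\epsilon\cdot\nabla u\in C^{1,1}$ with compact support. Hence it lies in $H_0^1(\Omega)=H_0^1(\Omega;w_\epsilon)=D(\mathcal A_\epsilon^{\frac12})$, and $u\in D(\mathcal A_\epsilon)=H^2\cap H_0^1$; Corollary \ref{04.13.C1} concludes.

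The main obstacle is rigor in the distributional identifications near the origin: showing that $u\in D(\mathcal A)$ with the pointwise formula, and that the distributional gradient of $g$ equals its pointwise gradient. Both are handled by the excision and cutoff estimates above.
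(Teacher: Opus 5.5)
Your proposal is correct and follows essentially the same route as the paper. The shared steps are the spectral characterization $D(\mathcal A^{3/2})=\{u\in D(\mathcal A):\mathcal A u\in H_0^1(\Omega;w)\}$, the pointwise bounds $|\mathcal A\phi|\lesssim|x|^{\alpha-1}$ and $|\nabla\mathcal A\phi|\lesssim|x|^{\alpha-2}$ that produce the threshold $3\alpha+N>4$, the cutoff $\chi_\delta$ giving membership in the closure of $C_0^\infty(\Omega)$, the counterexample $\phi=\eta x_1$, and the smooth-coefficient argument for $\mathcal A_\epsilon$. The one imprecise spot is your justification of the distributional gradient in the necessity part: the relevant facts are $|x|^{\alpha-2}\in L^1_{loc}$ and the vanishing flux $\rho^{N+\alpha-2}$ (as the paper uses), not $|x|^{\alpha-1}\in L^1_{loc}$. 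For necessity this check is not even needed, since any weak gradient of $g$ must agree a.e.\ with the classical gradient on $\Omega\setminus\{0\}$.
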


\begin{proof}
By the spectral definition and $D(\mathcal A^{\frac{1}{2}})=H_0^1(\Omega;w)$,
\begin{equation*}
u\in D(\mathcal A^{3/2}) \quad\Longleftrightarrow\quad u\in D(\mathcal A) \ \text{and}\  \mathcal A u\in H_0^1(\Omega;w).
\end{equation*}
For any $\phi\in C_0^\infty(\Omega)$, we have
\begin{equation*}
g:=\mathcal A\phi =-|x|^\alpha\Delta\phi -\alpha|x|^{\alpha-2}x\cdot\nabla\phi
\end{equation*}
in the distributional sense. Near the origin,
\begin{equation*}
	|g(x)|\leq C_\phi|x|^{\alpha-1}.
\end{equation*}
Since $N+2\alpha-2>0$, this implies $g\in L^2(\Omega)$, and hence $\phi\in D(\mathcal A)$. 

Suppose first that $3\alpha+N>4$. Differentiating away from the origin gives
\begin{equation*}
|\nabla g(x)|\leq C_\phi|x|^{\alpha-2}
\end{equation*}
near the origin. This gradient is locally integrable because $N+\alpha-2>0$. It also represents the distributional gradient of $g$: integration by parts outside $B_\rho$ produces an inner boundary term bounded by $C\rho^{N+\alpha-2}$, which tends to zero as $\rho\downarrow0$. Thus, for sufficiently small $\delta>0$,
\begin{equation*}
\int_{B_{2\delta}}w|\nabla g|^2\,\df x 	\leq C_\phi\int_0^{2\delta}r^{3\alpha+N-5}\,\df r	\leq C_\phi\delta^{3\alpha+N-4}.
\end{equation*}
Away from the origin, $g$ is smooth and has compact support in $\Omega$. Therefore, $g\in H^1(\Omega;w)$.
	
To show that $g\in H_0^1(\Omega;w)$, choose $\chi_\delta\in C^\infty(\mathbb R^N)$ satisfying
\begin{equation*}
0\leq\chi_\delta\leq1,\qquad \chi_\delta=0\ \text{on }B_\delta,\qquad \chi_\delta=1\ \text{outside }B_{2\delta},\qquad |\nabla\chi_\delta|\leq C\delta^{-1}.
\end{equation*}
	Then $\chi_\delta g\in C_0^\infty(\Omega)$, and
\begin{equation*}
\|(1-\chi_\delta)g\|_{L^2(\Omega)}^2 \leq C_\phi\delta^{N+2\alpha-2}.
\end{equation*}
	Moreover,
\begin{equation*}
\begin{split}
\int_\Omega w|\nabla((1-\chi_\delta)g)|^2\,\df x
&\leq 2\int_{B_{2\delta}}w|\nabla g|^2\,\df x +C\delta^{-2}	\int_{B_{2\delta}\setminus B_\delta}w|g|^2\,\df x\\
&\leq C_\phi\delta^{3\alpha+N-4}.
\end{split}
\end{equation*}
Both bounds tend to zero as $\delta\downarrow0$. Consequently, $\chi_\delta g\to g$ in $H^1(\Omega;w)$, so $g\in H_0^1(\Omega;w)$ and $\phi\in D(\mathcal A^{\frac 32})$.

Conversely, choose $\eta\in C_0^\infty(\Omega)$ with $\eta=1$ on a ball $B_\rho\subset\Omega$, and set $\phi(x)=\eta(x)x_1$. On $B_\rho\setminus\{0\}$,
\begin{equation*}
\mathcal A\phi=-\alpha|x|^{\alpha-2}x_1, \qquad \nabla(\mathcal A\phi) =-\alpha|x|^{\alpha-2}
\bigl(e_1+(\alpha-2)\omega_1\omega\bigr), \quad \omega=\frac{x}{|x|},
\end{equation*}
where $e_1=(1,0,\ldots,0)$. Hence
\begin{equation*}
\int_{B_\rho}w|\nabla(\mathcal A\phi)|^2\,\df x =c_{\alpha,N}\int_0^\rho r^{3\alpha+N-5}\,\df r,
\end{equation*}
with
\begin{equation*}
c_{\alpha,N} =\alpha^2|\mathbb S^{N-1}| \left(1-\frac{\alpha(2-\alpha)}{N}\right)>0.
\end{equation*}
Indeed, $0<\alpha(2-\alpha)\leq1$ and $N\geq2$. If $3\alpha+N\leq4$, the radial integral diverges. Thus $\mathcal A\phi\notin H_0^1(\Omega;w)$ and $\phi\notin D(\mathcal A^{\frac 32})$, proving necessity.
	
Finally, for each fixed $\epsilon\in(0,R)$, $w_\epsilon$ is strictly positive and belongs to $C^{2,1}(\overline\Omega)$. For every $\phi\in C_0^\infty(\Omega)$, we therefore have $\phi\in D(\mathcal A_\epsilon)$ and $\mathcal A_\epsilon\phi\in H_0^1(\Omega) =D(\mathcal A_\epsilon^{\frac 12})$. The spectral characterization yields $\phi\in D(\mathcal A_\epsilon^{\frac 32})$.
\end{proof}

\begin{theorem}\label{04.14.T1}
Let $\vp^0\in C_0^\iy(\Om)$ and $f\in L^2(0,T; H_0^1(\Om))$. Let $\vp, \vp_\e\ (\e\in (0,R))$ be the weak solutions with respect to $(\vp^0,f)$ and $(\vp^0_{\epsilon}=\vp^0,f_{\epsilon}=f)$, respectively.  Then there exists a sequence $\{{\epsilon_k} \}_{k \in \mathbb N^*}\s (0,R)$ with $\epsilon_k\downarrow0$ such that
\begin{equation}\label{04.14.1}
	\begin{split}
		\vp_{\e_k}
		&\ra \vp \mbox{ weakly in } L^2(0,T; H_0^1(\Om;w)). 
	\end{split}
\end{equation}
If we further assume $f\in H^1(0,T;H_0^1(\Omega;w))$, then, possibly after passing to a further subsequence, still denoted by ${\epsilon_k}$, we have
	\begin{equation}\label{04.14.2}
		\begin{split} 
			\pt_t\vp_{\e_k}
			&\ra \pt_t\vp \mbox{ weakly in }L^2(Q). 
		\end{split} 
	\end{equation}
Furthermore, if $f=0$ and $3\alpha+N>4$, then, possibly after passing to a further subsequence, still denoted by $\{\epsilon_k\}$, we have
	\begin{equation}\label{04.14.3}
		\begin{split}
			\vp_{\e_k} 
			&\ra \vp \mbox{ strongly in }L^2(Q), \\
			\f{\pt\vp_{\e_k}}{\pt\nu}
			&\ra \f{\pt\vp}{\pt\nu} \mbox{ strongly in } L^2(\pt Q). 
		\end{split}
	\end{equation}
\end{theorem}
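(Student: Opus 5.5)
The plan is to derive uniform-in-$\e$ bounds from Theorem \ref{04.13.T1}, extract weak limits by compactness, identify them through the variational characterizations, and then upgrade to strong convergence when $f=0$ and $3\al+N>4$.

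\textbf{Uniform bounds.} Since $\psi_\e\ge |x|$, we have $w_\e\ge w$ pointwise. Hence for every $u\in H_0^1(\Om)$,
\[
\|u\|_{H_0^1(\Om;w)}\le\|u\|_{H_0^1(\Om;w_\e)}.
\]
Moreover $w_\e\le \max(\e,|x|)^\al\le C(1+|x|^\al)$, so
\[
\|\vp^0\|_{H_0^1(\Om;w_\e)}\le C\|\vp^0\|_{H^1_0(\Om)},
\]
and the same holds for $f$. By \eqref{04.13.1}, whose constant depends only on $\al,N,M,T$ and not on $\e$, the family $\{\vp_\e\}$ is therefore bounded in $L^\iy(0,T;H_0^1(\Om;w))$. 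By weak compactness there is a subsequence $\vp_{\e_k}\rightharpoonup\tilde\vp$ in $L^2(0,T;H_0^1(\Om;w))$.

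\textbf{Identifying the limit.} For each test function $\psi$ as in Proposition \ref{04.13.P1}, identity \eqref{04.13.8} holds. Since $w_\e\ra w$ uniformly on $\Om$ (the difference is at most $\e^\al$ on $B_\e$), we can write
\[
\iint_Q w_\e\nabla\vp_\e\cdot\nabla\ol\psi=\iint_Q w\nabla\vp_\e\cdot\nabla\ol\psi+\iint_Q (w_\e-w)\nabla\vp_\e\cdot\nabla\ol\psi .
\]
The second term is bounded by
\[
\|w_\e-w\|_{L^\iy}\,\|w^{-1/2}\nabla\psi\|_{L^2}\,\|\vp_\e\|_{L^2H_0^1(w)}.
\]
Here $w^{-1/2}\nabla\psi\in L^2$ because $N>\al$, and one uses $\|w^{1/2}\nabla\vp_\e\|\le\|w_\e^{1/2}\nabla\vp_\e\|$. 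So this term tends to $0$. Passing to the limit gives \eqref{04.12.4} for $\tilde\vp$, and Proposition \ref{04.12.P1} together with uniqueness (Theorem \ref{04.07.T1}) yields $\tilde\vp=\vp$. Since the limit is unique, this gives \eqref{04.14.1}.

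\textbf{Time derivatives.} When $f\in H^1(0,T;H_0^1)$, the data satisfy $\vp^0\in D(\mcA_\e)$. Here $\|\mcA_\e\vp^0\|_{L^2}$ is bounded uniformly in $\e$, because $\vp^0$ is supported away from nothing special but $\mcA_\e\vp^0=-w_\e\De\vp^0-\nabla w_\e\cdot\nabla\vp^0$ and $|\nabla w_\e|\le C\psi_\e^{\al-1}\le C|x|^{\al-1}$, which lies in $L^2$. Then \eqref{04.13.3}, in the form of \eqref{04.11.7}, gives a uniform bound for $\pt_t\vp_\e$ in $L^2(Q)$. A further subsequence converges weakly in $L^2(Q)$, and the limit must be $\pt_t\vp$ by distributional consistency. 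This gives \eqref{04.14.2}.

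\textbf{Strong convergence when $f=0$.} The bounds above place $\vp_\e$ uniformly in $L^2(0,T;H_0^1(\Om;w))\cap H^1(0,T;L^2)$. Lemma \ref{08.15.L4} and the Aubin–Lions lemma then give strong convergence in $L^2(Q)$. For the normal derivatives I need compactness in $L^2(\pt Q)$. I will show $\pt_\nu\vp_\e$ is bounded in $H^1(0,T;L^2(\pt\Om))$ via \eqref{04.13.4}, and bounded in $L^2(0,T;H^{1/2}(\pt\Om))$ via the $H^2(\Om-B_{3R})$ estimate in \eqref{04.13.3}. Bounds in these two spaces give compactness in $L^2(\pt Q)$.

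The main obstacle is the uniform bound on $\|\vp^0\|_{D(\mcA_\e^{3/2})}=\|\mcA_\e\vp^0\|_{H_0^1(w_\e)}$. I would compute $\nabla(\mcA_\e\vp^0)$ and use the derivative bounds \eqref{12.16.1} and Remark \ref{06.07.R1}, which give $|D^2w_\e|\le C\psi_\e^{\al-2}$. This yields
\[
\int w_\e|\nabla\mcA_\e\vp^0|^2\le C\int\psi_\e^{3\al-4}\le C\Big(\e^{3\al-4}\e^N+\int_{|x|>\e}|x|^{3\al-4}\Big),
\]
which is uniformly bounded exactly when $3\al+N>4$, as in Lemma \ref{04.14.A1}.

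It remains to identify the limit of $\pt_\nu\vp_{\e_k}$. Near $\pt\Om$ we have $w_\e=w$, so I will pass to the limit in the Green identity used in Step 7 of Theorem \ref{04.07.T1}, using the convergences already obtained. This shows the limit equals $\pt_\nu\vp$ and gives \eqref{04.14.3}.
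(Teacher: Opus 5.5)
Your proposal is correct and follows the paper's proof essentially step for step. The shared steps are: uniform bounds from \eqref{04.13.1} together with $w\le w_\e$; identification of the weak limit through the variational formulation of Proposition \ref{04.12.P1}; Aubin--Lions for the $L^2(Q)$ convergence; a uniform bound on $\|\mcA_\e\vp^0\|_{H_0^1(\Om;w_\e)}$, finite exactly when $3\al+N>4$, giving boundedness of $\pt_\nu\vp_\e$ in $H^1(0,T;L^2(\pt\Om))\cap L^2(0,T;H^{1/2}(\pt\Om))$; and identification of the boundary limit via a Green identity localized near $\pt\Om$, where $w_\e=w$. You are in fact slightly more careful than the paper in checking that $\|\vp^0\|_{H_0^1(\Om;w_\e)}$ and $\|\mcA_\e\vp^0\|_{L^2(\Om)}$ stay bounded as $\e\to0$. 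One small slip there: $\psi_\e^{\al-1}\le C|x|^{\al-1}$ fails near the origin when $\al>1$, but in that case $\psi_\e^{\al-1}$ is simply bounded, so the uniform $L^2$ bound still holds.
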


\begin{proof}
We prove this theorem by the following steps. 
	
{\it Step 1}.  We prove \eqref{04.14.1}. 
	
Since $\vp^0\in C_0^\iy(\Om)$ and $f\in L^2(0,T; H_0^1(\Om))$, we get $\vp_\e\in L^2(0,T; H_0^1(\Om;w))$, and 
\begin{equation*}
	\iint_Q w\nabla\vp_\e\cdot \nabla\ol\vp_\e\df x\df t\leq \iint_Q w_\e\nabla\vp_\e\cdot\nabla\ol\vp_\e\df x\df t \mbox{ for all } \e\in (0,R). 
\end{equation*}
Together with these facts and \eqref{04.13.1} and $f\in L^2(0,T; H_0^1(\Om;w_\e))$ for all $\e\in (0,R)$, we get that there exists a sequence $\{\e_k\}_{k\in\N^*}\s(0,R)$ with $\e_k\downarrow 0$ and
\begin{equation}\label{04.14.5}
	y\in L^2(0,T; H_0^1(\Om;w))
\end{equation}
such that 
\begin{equation}\label{04.14.4}
\vp_{\e_k}\ra y\mbox{ weakly in } L^2(0,T; H_0^1(\Om;w)), \mbox{ as } k\ra\iy.	
\end{equation}
Now, we prove $y=\vp$. i.e., $y$ is a weak solution of the equation \eqref{12.14.1}. 
	
Indeed, for each $\psi\in C^\iy(\ol Q)$ with $\supp \psi(t)\s \Om$ for all $t\in [0,T]$ and $\psi(T)=0$, we get 
\begin{equation*}
-\im \iint_Q \vp_{\e_k} \pt_t\ol\psi\df x\df t+\iint_Q w_{\e_k} \nabla\vp_{\e_k}\cdot \nabla\ol\psi\df x\df t=\iint_Q f \ol\psi\df x\df t+\im \int_\Om \vp^0\ol\psi(0)\df x.
\end{equation*}
From \eqref{04.14.4} we get 
\begin{equation*}
	-\im\iint_Q \vp_{\e_k} \pt_t\ol\psi\df x\df t\ra	-\im \iint_Q y\pt_t\ol\psi \df x\df t,	\mbox{ as } k\ra\iy.
\end{equation*}
Hence, if 
\begin{equation}\label{04.14.6}
		\iint_Q w_{\e_k}\nabla\vp_{\e_k} \cdot \nabla\ol\psi\df x\df t\ra \iint_Q w\nabla y\cdot \nabla \ol\psi\df x\df t, \mbox{ as } k\ra\iy,  
\end{equation}
then $y=\vp$ by \eqref{04.12.4}. Note that 
\begin{equation*}
\iint_Q w_{\e_k} \nabla\vp_{\e_k}\cdot \nabla\ol\psi\df x\df t=\iint_Q w\nabla\vp_{\e_k}\cdot \nabla\ol\psi\df x\df t+\iint_Q (w_{\e_k}-w)\nabla\vp_{\e_k}\cdot \nabla\ol\psi\df x\df t, 
\end{equation*}
and from $w_{\e_k}=w$ on $\Omega - B_{\e_k}$ we get
\begin{equation*}
\iint_Q (w_{\e_k}-w)\nabla\vp_{\e_k}\cdot \nabla\ol\psi\df x\df t=\iint_{B_{\e_k} \ts (0,T)}(w_{\e_k}-w)\nabla\vp_{\e_k}\cdot \nabla\ol\psi\df x\df t, 
\end{equation*}
together with these and \eqref{04.14.4} and \eqref{04.14.6}, we only need to show 
\begin{equation}\label{04.14.7}
	\iint_{B_{\e_k} \ts (0,T)}(w_{\e_k}-w)\nabla\vp_{\e_k}\cdot \nabla\ol\psi\df x\df t \ra 0, \mbox{ as }  k \ra \iy 
\end{equation}
	
It is obvious that 
\begin{equation*}
\begin{split}
	&\left|\iint_{B_{\e_k} \ts (0,T)}w_{\e_k}\nabla\vp_{\e_k}\cdot \nabla\ol\psi\df x\df t\right|\\
	&\leq \iint_{B_{\e_k} \ts (0,T)} w_{\e_k} |\nabla\vp_{\e_k}||\nabla\psi|\df x\df t\leq \|\vp_{\e_k}\|_{L^2(0,T; H_0^1(\Om;w_{\e_k}))}\left(\iint_{B_{\e_k}\ts (0,T)} w_{\e_k} |\nabla\psi|^2\df x\df t\right)^\f{1}{2}\\
	&\leq C\sqrt{T}\left(\sup_{(x,t)\in Q}|\nabla\psi(x,t)|\right)\left(\|\vp^0\|_{H_0^1(\Om;w)}+\|f\|_{L^2(0,T; H_0^1(\Om;w))}\right){\e_k}^\f{N+\al}{2}
	\end{split}
\end{equation*}
by \eqref{12.16.1}, and similar to this we get
\begin{equation*}
	\begin{split}
	&\left|\iint_{B_{\e_k} \ts (0,T)}w\nabla\vp_{\e_k}\cdot \nabla\ol\psi\df x\df t\right|\\ 
	&\leq C\sqrt{T}\left(\sup_{(x,t)\in Q}|\nabla\psi(x,t)|\right)\left(\|\vp^0\|_{H_0^1(\Om;w)}+\|f\|_{L^2(0,T; H_0^1(\Om;w))}\right){\e_k}^\f{N+\al}{2}.
	\end{split}
\end{equation*}
These show that \eqref{04.14.7} holds. 
	
{\it Step 2}. We prove \eqref{04.14.2}. 

From \eqref{04.13.3} and \eqref{04.14.4}, possibly after passing to a further subsequence, still denoted by $\{\e_k\}$, we get
\begin{equation*}
	\pt_t\vp_{\e_k}\ra \pt_t\vp \mbox{ weakly in }L^2(Q).
\end{equation*}
This proves \eqref{04.14.2}. 

{\it Step 3}. We prove \eqref{04.14.3}. Assume that $f=0$ and $3\alpha+N>4$. Since
$\vp^0\in C_0^\infty(\Omega)$, Lemma \ref{04.14.A1} gives
\[
\vp^0\in D(\mathcal A^{\frac 32}) \cap D(\mathcal A_\epsilon^{\frac 32}), \qquad \epsilon\in(0,R).
\]	
Since the embeddings $H_0^1(\Om;w)\hra L^2(\Om)$ (see Lemma \ref{08.15.L4}) and 
\begin{equation*}
	\left\{z\in L^2(0,T; H_0^1(\Om;w))\colon \pt_t z\in L^2(Q)\right\}\hra L^2(Q)
\end{equation*}
are compact, from \eqref{04.14.4} and Step 2, possibly after passing to a further subsequence, still denoted by $\{\e_k\}$, we get the first part in \eqref{04.14.3}.

Since $\vp_{\e_k}\ ({\e_k}\in (0,R))$ is a weak solution of \eqref{11.14.2} with $f_{\e_k}=0$, we have
\begin{equation*}
	\vp^0=\sum_{n=1}^\iy \vp_n^0\Phi_n^{\e_k}, \mbox{ and } \vp_{\e_k}(t)=\sum_{n=1}^\iy e^{\im \la_n^{\e_k} t} \vp_n^0\Phi_n^{\e_k}. 
\end{equation*}
And hence $\pt_t \vp_{\e_k}(t)=\im\sum_{n=1}^\iy\la_n^{\e_k} e^{\im \la_n^{\e_k} t}\vp_n^0\Phi_n^{\e_k}\in H_0^1(\Om;w_{\e_k})$. This implies that $\pt_t\vp_{\e_k}$ is a weak solution of the following system 
\begin{equation*}
	\begin{cases}
		\im\pt_t\xi +\mcA_{\e_k} \xi=0, &\mbox{in }Q, \\
		\xi=0, &\mbox{on }\pt Q,\\
		\xi(0)=\im \mcA_{\e_k} \vp^0, &\mbox{in }\Om. 
	\end{cases}
\end{equation*}

Direct computation gives
\begin{equation*}
-\mcA_{\e_k}\vp^0 =\al\psi_{\e_k}^{\al-1} \nabla\vp^0\cdot \nabla\psi_{\e_k} +\psi_{\e_k}^{\al}\De\vp^0,
\end{equation*}
and
\begin{equation*}
\begin{split}
-\nabla(\mcA_{\e_k}\vp^0)
&=\al(\al-1)\psi_{\e_k}^{\al-2} (\nabla\vp^0\cdot\nabla\psi_{\e_k}) \nabla\psi_{\e_k} +\al\psi_{\e_k}^{\al-1} D^2\vp^0\nabla\psi_{\e_k}\\
&\quad+\al\psi_{\e_k}^{\al-1} D^2\psi_{\e_k}\nabla\vp^0 +\al\psi_{\e_k}^{\al-1} \De\vp^0\nabla\psi_{\e_k} +\psi_{\e_k}^{\al}\nabla(\De\vp^0).
\end{split}
\end{equation*}
Using \eqref{12.16.1} and \eqref{12.18.1}, together with $\vp^0\in C_0^\infty(\Omega)$, we obtain
\begin{equation*}
\int_{B_{\e_k}} \left( |\mcA_{\e_k}\vp^0|^2 +w_{\e_k}|\nabla(\mcA_{\e_k}\vp^0)|^2 \right)\,\df x \leq C\left( \e_k^{N+2\al-2}+\e_k^{N+3\al-4} \right),
\end{equation*}
where $C>0$ depends only on $\al$, $N$, $M$, and $\vp^0$. On $\Omega-B_{\e_k}$, we have $w_{\e_k}=w$ and $\mcA_{\e_k}\vp^0=\mcA\vp^0$. Moreover, Lemma \ref{04.14.A1} and $3\al+N>4$ give $\mcA\vp^0\in H_0^1(\Omega;w)$. Since $N+2\al-2>0$ and $N+3\al-4>0$, we obtain
\begin{equation*}
\begin{split}
\|\mcA_{\e_k}\vp^0\|_{L^2(\Om)}\leq C, \quad \left\|\mcA_{\e_k}\vp^0\right\|_{H_0^1(\Om;w_{\e_k})}\leq C,
\end{split}
\end{equation*}
where the positive constant $C$ depends only on $\al, N$ and $M$ and  $\vp^0$. Together with this and \eqref{04.13.1}, we get 
\begin{equation*}
\left\|\pt_t\f{\pt\vp_{\e_k}}{\pt\nu}\right\|_{L^2(\pt Q)}=\left\|\f{\pt(\pt_t\vp_{\e_k})}{\pt\nu}\right\|_{L^2(\pt Q)}\leq C\|\mcA_{\e_k}\vp^0\|_{H_0^1(\Om;w_{\e_k})}\leq C,
\end{equation*}
where $C$ is a positive constant independent of $k$.

Hence, possibly after passing to a further subsequence, still denoted by $\{\e_k\}$, we have
\begin{equation}\label{04.14.8}
	\begin{split} 
		\f{\pt\vp_{\e_k}}{\pt\nu}
		&\ra z \mbox{ weakly in } L^2(0,T; H^\f{1}{2}(\pt\Om)),\\
		\pt_t\f{\pt\vp_{\e_k}}{\pt\nu}
		&\ra \pt_t z\mbox{ weakly in }L^2(\pt Q)
	\end{split} 
\end{equation}
by \eqref{04.13.3} and the classical Sobolev trace theorem.
	
Note that for each $\eta\in C^\iy(\pt\Om)$ there exists $\wh \eta\in C^\iy(\R^N)$ with $\supp \wh \eta \s \R^N-B_R$ such that $\wh \eta=\eta$ on $\pt\Om$. Multiplying $\wh \eta$ on the both side of \eqref{11.14.2}, integrating on $Q$, then 
\begin{equation*}
	\begin{split}
		0
	&=\im \iint_Q (\pt_t\vp_{\e_k})\wh \eta \df x\df t-\iint_{\pt Q} w\f{\pt\vp_{\e_k}}{\pt \nu}\eta\df S\df t+\iint_Q w\nabla\vp_{\e_k}\cdot \nabla\wh\eta\df x\df t
		\end{split}
\end{equation*}
by \eqref{04.14.4} and \eqref{04.14.8}, and hence 
\begin{equation}\label{04.14.9}
	0 =\im \iint_Q (\pt_t\vp)\wh \eta \df x\df t-\iint_{\pt Q} wz\eta\df S\df t+\iint_Q w\nabla\vp\cdot \nabla\wh\eta\df x\df t
\end{equation}
by letting $k\ra\iy$. Multiplying $\wh \eta$ on the both sides of \eqref{12.14.1}, integrating on $Q$, then 
\begin{equation*}
	0 =\im \iint_Q (\pt_t\vp)\wh \eta \df x\df t-\iint_{\pt Q} w\f{\pt\vp}{\pt \nu}\eta\df S\df t+\iint_Q w\nabla\vp\cdot \nabla\wh\eta\df x\df t.
\end{equation*}
Together with this and \eqref{04.14.9} and the arbitrariness of $\eta\in C^\iy(\pt\Om)$, we get
\begin{equation}\label{04.14.10}
	z=\f{\pt\vp}{\pt \nu}. 
\end{equation}

Finally, since the embeddings  $H^\f{1}{2}(\pt\Om)\hra L^2(\pt\Om)$ and 
\begin{equation*}
	\left\{z\in L^2(0,T; H^\f{1}{2}(\pt\Om))\colon \pt_tz\in L^2(\pt Q)\right\}\hra L^2(\pt Q)
\end{equation*}
are compact, from \eqref{04.14.8} and \eqref{04.14.10}, we get
\begin{equation*}
	\f{\pt\vp_{\e_k}}{\pt\nu}\ra \f{\pt \vp}{\pt\nu} \mbox{ strongly in }L^2(\pt Q).
\end{equation*}
This shows the second part in \eqref{04.14.3}. We complete the proof of this theorem. 
\end{proof}

\section{Observability}\label{S4}
\subsection{Observability for the uniformly elliptic approximating equations}
In this subsection, we assume $\e\in (0,R)$, and $\vp_\e^0\in D(\mcA_\e^\f{3}{2})$ and  $f_\e$=0 in \eqref{11.14.2}. Then the weak solution $\vp_\e$ of equation \eqref{11.14.2} with respect to $(\vp_\e^0,f_\e)$ satisfies Theorem \ref{04.13.T1}, hence the integration by parts in the following is the same as the classical cases.

Let 
\begin{equation}\label{04.14.11}
	H=x.
\end{equation}
Denote
\begin{equation}\label{04.15.8}
	\Ga_0=\left\{x\in \pt\Om\colon H\cdot \nu>0\right\}, \quad \Sigma_0=\Ga_0\ts (0,T). 
\end{equation}
and \begin{equation}\label{04.15.7}
	\theta=\f{1}{2}\sup_{x\in\pt\Om} \left(|x|^\al H\cdot \nu\right).
\end{equation}

Multiplying $H\cdot \nabla \ol\vp_\e+\f{N}{2}\ol\vp_\e$ on the both sides of \eqref{11.14.2}, integration by parts, then, we have 
\begin{equation}\label{04.15.3}
	\begin{split}
		0
		&=\im \iint_Q (\pt_t\vp_\e)\left(H\cdot\nabla\ol\vp_\e+\f{N}{2}\ol\vp_\e\right)\df x\df t-\iint_Q [\Div(w_\e\nabla \vp_\e)]\left(H\cdot \nabla\ol \vp_\e+\f{N}{2}\ol\vp_\e\right)\df x\df t\\
		&\equiv B_1+B_2.
	\end{split}
\end{equation}

From 
\begin{equation*}
	\begin{split}
		&2\im \iint_Q (\pt_t\vp_\e)(H\cdot \nabla\ol\vp_\e)\df x\df t-2\im \iint_Q (\pt_t\ol\vp_\e)\left(H\cdot \nabla\vp_\e\right)\df x\df t\\
		&=\im \int_\Om \vp_\e(H\cdot \nabla\ol\vp_\e)\df x\bigg|_{t=0}^{t=T}-\im \int_\Om \ol\vp_\e (H\cdot \nabla\vp_\e)\df x\bigg|_{t=0}^{t=T}\\
		&\hspace{4.5mm}+\im N\iint_Q \vp_\e \pt_t\ol\vp_\e\df x\df t-\im N \iint_Q \ol\vp_\e \pt_t\vp_\e \df x\df t,
	\end{split}
\end{equation*}
we get
\begin{equation}\label{04.15.2}
	\begin{split}
		2\Real B_1
		&=\im \iint_Q (\pt_t\vp_\e)(H\cdot \nabla\ol\vp_\e)\df x\df t-\im \iint_Q (\pt_t\ol\vp_\e)\left(H\cdot \nabla\vp_\e\right)\df x\df t\\
		&\hspace{4.5mm}+\f{N}{2}\im \iint_Q (\pt_t\vp_\e)\ol\vp_\e\df x\df t-\f{N}{2}\im\iint_Q (\pt_t\ol\vp_\e)\vp_\e\df x\df t\\
		&=-\Image \int_\Om \vp_\e (H\cdot \nabla\ol\vp_\e)\df x\bigg|_{t=0}^{t=T}. 
	\end{split}
\end{equation}

From
\begin{equation*}
	\begin{split}
		&2\Real \iint_Q w_\e\nabla \vp_\e \cdot \nabla (H\cdot \nabla\ol \vp_\e)\df x\df t\\
		&=\iint_Q w_\e\nabla \vp_\e \cdot \nabla (H\cdot \nabla\ol \vp_\e)\df x\df t+\iint_Q w_\e\nabla \ol\vp_\e \cdot \nabla (H\cdot \nabla \vp_\e)\df x\df t\\
		&=2\iint_Q w_\e|\nabla\vp_\e|^2\df x\df t+\iint_Q w_\e H\cdot \nabla |\nabla\vp_\e|^2\df x\df t,
	\end{split}
\end{equation*}
and (by $\vp_\e=0$ on $\pt\Om$)
\begin{equation*}
	\begin{split}
		&2\Real \iint_Q [\Div(w_\e\nabla\vp_\e)]\frac{N}{2} \ol\vp_\e\df x\df t\\
		&=\f{N}{2}\iint_Q [\Div(w_\e\nabla\vp_\e)]\ol\vp_\e\df x\df t+\f{N}{2}\iint_Q [\Div(w_\e\nabla\ol\vp_\e)]\vp_\e\df x\df t=-N\iint_Q w_\e|\nabla\vp_\e|^2\df x\df t,
	\end{split}
\end{equation*}
we get
\begin{equation*}
	\begin{split}
		\Real B_2
		&=-\Real \iint_Q [\Div(w_\e\nabla \vp_\e)]\left(H\cdot\nabla\ol \vp_\e+\f{N}{2}\ol\vp_\e\right)\df x\df t\\
		&=-\Real \iint_{\partial Q} (w_\e\nabla \vp_\e\cdot\nu) (H\cdot\nabla\ol \vp_\e)\df S\df t+\Real \iint_Q w_\e\nabla \vp_\e \cdot \nabla (H\cdot \nabla\ol \vp_\e)\df x\df t\\
		&\hspace{4.5mm}+\f{N}{2}\iint_Q w_\e|\nabla\vp_\e|^2\df x\df t\\
		&=-\iint_{\partial Q} w_\e\left|\f{\pt\vp_\e}{\pt\nu}\right|^2H\cdot \nu\df S\df t+\f{N+2}{2}\iint_Q w_\e|\nabla\vp_\e|^2\df x\df t+\f{1}{2}\iint_Q w_\e H\cdot \nabla|\nabla\vp_\e|^2\df x\df t\\
		&=-\f{1}{2}\iint_{\partial Q} w_\e\left|\f{\pt\vp_\e}{\pt\nu}\right|^2H\cdot \nu\df S\df t+\iint_Q w_\e|\nabla\vp_\e|^2\df x\df t-\f{1}{2}\iint_Q|\nabla\vp_\e|^2H\cdot \nabla w_\e\df x\df t
	\end{split}
\end{equation*}
by $\vp_\e=0$ on $\pt\Om$. Note that from \eqref{12.18.1} and \eqref{12.18.2}, we get 
\begin{equation*}
	\begin{split}
		H\cdot \nabla w_\e
		&=
		\begin{cases}
			\al w_\e \psi_\e^{-1}\left(\f{3}{2\e}|x|^2-\f{1}{2\e^3}|x|^4\right), &\mbox{on }B_\e, \\
			\al w_\e, &\mbox{on }\Om-B_\e,
		\end{cases}\\
		&=\al w_\e-\f{3\al}{8\e^3}w_\e\psi_\e^{-1}\left(\e^2-|x|^2\right)^2, 
	\end{split}
\end{equation*}
Hence, we obtain
\begin{equation}\label{04.15.1}
	\begin{split}
		\Real B_2
		&=-\f{1}{2}\iint_{\partial Q} w_\e\left|\f{\pt\vp_\e}{\pt\nu}\right|^2H\cdot \nu\df S\df t\\
		&\hspace{4.5mm}+\f{2-\al}{2}\iint_Q w_\e|\nabla\vp_\e|^2\df x\df t+\f{3\al}{16\e^3}\iint_Qw_\e |\nabla\vp_\e|^2\psi_\e^{-1}\left(\e^2-|x|^2\right)^2\df x\df t.
	\end{split}
\end{equation}

Together with \eqref{04.15.3} and \eqref{04.15.2} and \eqref{04.15.1}, we get
\begin{equation}\label{04.15.4}
	\begin{split}
		&\f{1}{2}\iint_{\partial Q} w_\e\left|\f{\pt\vp_\e}{\pt\nu}\right|^2H\cdot \nu\df S\df t\\
		&=-\f{1}{2}\Image \int_\Om \vp_\e(H\cdot \nabla\ol\vp_\e)\df x\bigg|_{t=0}^{t=T}+\f{2-\al}{2}\iint_Q w_\e|\nabla\vp_\e|^2\df x\df t\\
		&\hspace{4.5mm}+\f{3\al}{16\e^3}\iint_Q w_\e |\nabla\vp_\e|^2\psi_\e^{-1}\left(\e^2-|x|^2\right)^2\df x\df t.
	\end{split}
\end{equation}

Now, multiplying $\pt_t\ol\vp_\e$ on the both sides of \eqref{11.14.2}, integration by parts, from $\pt_t\ol\vp_\e=0$ on $\pt Q$ by Theorem \ref{04.13.T1}, we get
\begin{equation*}
	\begin{split}
		0
		&=\im\iint_Q |\pt_t\vp_\e|^2\df x\df t+\iint_Q w_{\epsilon} \nabla\vp_\e\cdot \nabla \pt_t\ol\vp_\e\df x\df t\\
		&=\im\iint_Q|\pt_t\vp_\e|^2\df x\df t+\f{1}{2}\int_\Om w_{\epsilon} |\nabla\vp_\e|^2\df x\bigg|_{t=0}^{t=T}, 
	\end{split}
\end{equation*}
and then 
\begin{equation}\label{04.15.5}
	\|\vp_\e(t)\|_{H_0^1(\Om;w_\e)}=\|\vp_\e(0)\|_{H_0^1(\Om;w_\e)}, \mbox{ for all } t\in [0,T]. 
\end{equation}

Note that from \eqref{08.31.10}, and $w\leq w_\e$ for all $\e\in (0,R)$ by Lemma \ref{12.17.L1}, we obtain 
\begin{equation*}
	\begin{split}
		\left|\Image \int_\Om \vp_\e(H\cdot \nabla \ol\vp_\e)\df x\right| 
		&\leq \left(\int_\Om |x|^{2-\al}|\vp_\e|^2\df x\right)^\f{1}{2}\left(\int_\Om |x|^\al |\nabla\vp_\e|^2\df x\right)^\f{1}{2}\\
		&\leq \de\|\vp_\e(t)\|_{H_0^1(\Om;w_\e)}^2+\f{C}{\de}\|\vp_\e(t)\|_{L^2(\Om)}^2
	\end{split}
\end{equation*}
for all $t\in [0,T]$ and all $\de>0$, 
where the positive constant $C$ depends only on $\al$ and $M$. Together with \eqref{04.13.2} and  \eqref{04.15.4} and \eqref{04.15.5} and $w_\e=w$ on $\pt Q$ for all $\e\in (0,R)$,  we get for all $\de>0$, 
\begin{equation*}
	\begin{split}
		\theta\iint_{\Sigma_0} \left|\f{\pt\vp_\e}{\pt\nu}\right|^2\df S\df t
		&\geq \f{1}{2}\iint_{\partial Q} w_\e\left|\f{\pt\vp_\e}{\pt\nu}\right|^2H\cdot \nu\df S\df t\\
		&\geq \left(\f{2-\al}{2}T-\de\right)\|\vp_\e^0\|_{H_0^1(\Om;w_\e)}^2-\f{C}{\de}\|\vp_\e^0\|_{L^2(\Om)}^2\\
		&\geq \left(\f{2-\al}{2}T-\de\right)\|\vp_\e^0\|_{H_0^1(\Om;w)}^2-\f{C}{\de}\|\vp_\e^0\|_{L^2(\Om)}^2.
	\end{split}
\end{equation*}
i.e., 
\begin{equation}\label{04.15.6}
	\left(\f{2-\al}{2}T-\de\right)\|\vp_\e^0\|_{H_0^1(\Om;w)}^2\leq \theta\iint_{\Sigma_0}\left|\f{\pt\vp_\e}{\pt\nu}\right|^2\df S\df t+\f{C}{\de}\|\vp_\e^0\|_{L^2(\Om)}^2
\end{equation}
by $w\leq w_\e$ for all $\e\in (0,R)$ and all $\de>0$, 
where the positive constant $C$ depends only on $\al$ and $M$. 

\subsection{Approximation}
\begin{theorem}\label{04.15.T1}	
Let $\vp^0\in H_0^1(\Om;w)$ and assume that $3\alpha+N>4$. Then there exists a positive constant $C$, depending only on $\alpha$, $N$, and $M$, such that
\begin{equation}\label{04.15.9}
	\left(\f{2-\al}{2}T-\de\right)\|\vp^0\|_{H_0^1(\Om;w)}^2\leq \theta\iint_{\Sigma_0}\left|\f{\pt\vp}{\pt\nu}\right|^2\df S\df t+\f{C}{\de}\|\vp^0\|_{L^2(\Om)}^2
\end{equation}
for all $\de>0$, where $\vp$ is the weak solution of \eqref{12.14.1} with respect to $(\vp^0,f=0)$, and $\Sigma_0, \theta$ are defined in \eqref{04.15.8} and \eqref{04.15.7}. 
\end{theorem}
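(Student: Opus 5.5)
The argument has two stages. First prove \eqref{04.15.9} for $\vp^0\in C_0^\iy(\Om)$ by passing to the limit in the approximate inequality \eqref{04.15.6}. Then extend to $\vp^0\in H_0^1(\Om;w)$ by density, using the continuity estimates of Theorem \ref{04.07.T1}.

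\textbf{Smooth data.} Fix $\vp^0\in C_0^\iy(\Om)$ and take $\vp_\e^0=\vp^0$, $f_\e=0$. By Lemma \ref{04.14.A1}, $\vp^0\in D(\mcA_\e^{\f32})$ for every $\e\in(0,R)$, so \eqref{04.15.6} holds for $\vp_\e$. The right-hand side of \eqref{04.15.6} already carries the $\e$-independent quantities $\|\vp^0\|_{H_0^1(\Om;w)}^2$ and $\|\vp^0\|_{L^2(\Om)}^2$. Hence only the boundary term must be passed to the limit. Since $3\al+N>4$ and $f=0$, Theorem \ref{04.14.T1} gives a sequence $\e_k\downarrow0$ with $\f{\pt\vp_{\e_k}}{\pt\nu}\ra\f{\pt\vp}{\pt\nu}$ strongly in $L^2(\pt Q)$, hence in $L^2(\Sigma_0)$. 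Letting $k\ra\iy$ in \eqref{04.15.6} gives \eqref{04.15.9} for this $\vp^0$, with the same constant $C=C(\al,M)$; it is independent of $\e$ and $\vp^0$. This is where the restriction $3\al+N>4$ enters.

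\textbf{General data.} For $\vp^0\in H_0^1(\Om;w)$, pick $\vp^0_j\in C_0^\iy(\Om)$ with $\vp^0_j\ra\vp^0$ in $H_0^1(\Om;w)$; this is possible by \eqref{04.07.2}. By the Poincar\'e inequality \eqref{08.15.10}, the convergence also holds in $L^2(\Om)$. Let $\vp_j,\vp$ be the corresponding solutions with $f=0$. By linearity and the hidden-regularity bound \eqref{04.07.3},
\begin{equation*}
\left\|\f{\pt\vp_j}{\pt\nu}-\f{\pt\vp}{\pt\nu}\right\|_{L^2(\pt Q)}\leq C\|\vp_j^0-\vp^0\|_{H_0^1(\Om;w)}\ra0 .
\end{equation*}
Therefore all three terms in \eqref{04.15.9} for $\vp_j$ converge to the corresponding terms for $\vp$. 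This proves the inequality for every $\de>0$. If $\f{2-\al}{2}T-\de\leq0$ the inequality is trivial, so no sign issue arises in the limit.

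\textbf{Main obstacle.} The delicate step is the smooth-data limit, specifically the strong convergence of the normal derivatives. Weak convergence alone would give only a liminf in the wrong direction for the boundary term, since it sits on the larger side of the inequality. What saves us is the uniform $D(\mcA_\e^{\f32})$-type bound on $\mcA_{\e_k}\vp^0$ in $H_0^1(\Om;w_{\e_k})$, already established in the proof of Theorem \ref{04.14.T1}, together with the compactness argument there. So I would rely on Theorem \ref{04.14.T1} as stated. One further check is needed: the subsequence may depend on $\vp^0$, but this is harmless because the constant $C$ in \eqref{04.15.6} does not depend on $\e$.
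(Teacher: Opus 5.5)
Your proposal is correct and follows the paper's proof essentially step for step. For $\vp^0\in C_0^\iy(\Om)$ you use Lemma \ref{04.14.A1} to obtain \eqref{04.15.6} with $\e$-independent data norms, then pass to the limit via the strong $L^2(\pt Q)$ convergence of normal derivatives in Theorem \ref{04.14.T1} (where $3\al+N>4$ enters), and finally extend to $H_0^1(\Om;w)$ by density through the hidden-regularity estimate \eqref{04.07.3}, exactly as in \eqref{04.15.10}.
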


\begin{proof}
Let $\vp^0\in C_0^\infty(\Omega)$. By Lemma \ref{04.14.A1}, $\vp^0\in D(\mathcal A_\epsilon^{\frac 32})$ for every $\epsilon\in(0,R)$. Hence, \eqref{04.15.6} holds for all $\delta>0$. Since $3\alpha+N>4$, the same lemma also gives $\vp^0\in D(\mathcal A^{\frac 32})$. Applying the strong boundary convergence \eqref{04.14.3} in Theorem \ref{04.14.T1} and passing to the limit as $\epsilon_k\downarrow0$, we obtain \eqref{04.15.9} for $\vp^0\in C_0^\infty(\Omega)$.

Finally, let $\vp^0\in H_0^1(\Om;w)$. Then there exists a sequence $\{\vp_n^0\}_{n\in\N^*}\s C_0^\iy(\Om)$ such that $\vp_n^0\ra \vp^0$ strongly in $H_0^1(\Om;w)$. Together with this and \eqref{04.07.3}, we get
\begin{equation}\label{04.15.10}
\|\vp_n-\vp\|_{L^2(0, T; H_0^1(\Om;w))}+\left\|\f{\pt\vp_n}{\pt\nu}-\f{\pt\vp}{\pt\nu}\right\|_{L^2(\pt Q)}\leq C\|\vp_n^0-\vp^0\|_{H_0^1(\Om;w)}\ra 0, \mbox{ as } n\ra\iy, 
\end{equation}
where $\vp_n$ is the weak solution of the equation \eqref{12.14.1} with respect to $(\vp_n^0,f=0)$. Note that for each $n\in\N^*$, from \eqref{04.15.9}, we have 
\begin{equation*}
	\left(\f{2-\al}{2}T-\de\right)\|\vp_n^0\|_{H_0^1(\Om;w)}^2\leq \theta \iint_{\Sigma_0}\left|\f{\pt\vp_n}{\pt\nu}\right|^2\df S\df t+\f{C}{\de}\|\vp_n^0\|_{L^2(\Om)}^2.
\end{equation*}
Letting $ n \rightarrow +\infty$, from \eqref{04.15.10}, we get \eqref{04.15.9} for $\vp^0\in H_0^1(\Om;w)$. 
\end{proof}

\begin{lemma}\label{04.15.L2}
Under the conditions in Theorem \ref{04.15.T1}, we have
\begin{equation*}	
	\|\vp(t)\|_{H_0^1(\Om;w)}=\|\vp^0\|_{H_0^1(\Om;w)}
\end{equation*}
for all $t\in [0,T]$. 
\end{lemma}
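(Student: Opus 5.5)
The most direct route is spectral. By Lemma \ref{06.28.L1}(i) and Remark \ref{04.09.R1}(3), for $u\in H_0^1(\Om;w)$ with $u=\sum_n u_n\Phi_n$ we have $\|u\|_{H_0^1(\Om;w)}^2=\sum_n\la_n|u_n|^2$. So it suffices to identify the Fourier coefficients of $\vp(t)$ and check that they have constant modulus in $t$. Note that $3\al+N>4$ is not actually needed; only $f=0$ and $\vp^0\in H_0^1(\Om;w)$ are used.

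\emph{Step 1: identify the coefficients.} Set $\vp_n(t)=(\vp(t),\Phi_n)_{L^2(\Om)}$. Test Definition \ref{04.07.D1}(i) with $v=\Phi_n\in H_0^1(\Om;w)$, using $(\vp,\Phi_n)_{H_0^1(\Om;w)}=a(\vp,\Phi_n)=\la_n\vp_n$ (Hermitian symmetry and \eqref{04.07.1}) and $f=0$. This gives, for a.e.\ $t$,
\begin{equation*}
\im\,\vp_n'(t)+\la_n\vp_n(t)=0,
\end{equation*}
with $\vp_n(0)=(\vp^0,\Phi_n)_{L^2(\Om)}=:\vp_n^0$. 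Here $\vp_n'(t)=\langle\pt_t\vp,\Phi_n\rangle$ in the $H^{-1}(\Om;w),H_0^1(\Om;w)$ pairing, because $\vp\in H^1(0,T;H^{-1}(\Om;w))$ and $\vp\in C([0,T];L^2(\Om))$ by \eqref{04.10.6}. Alternatively, this ODE was already derived in Step 2 of the proof of Proposition \ref{04.12.P1}. Hence $\vp_n\in H^1(0,T)$ is continuous, and the scalar ODE forces $\vp_n(t)=e^{\im\la_n t}\vp_n^0$ for all $t\in[0,T]$. This matches the representation used in Step 7 of Theorem \ref{04.07.T1}.

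\emph{Step 2: conclude.} For every $t$ we get
\begin{equation*}
\sum_n\la_n|\vp_n(t)|^2=\sum_n\la_n|\vp_n^0|^2=\|\vp^0\|_{H_0^1(\Om;w)}^2<\iy .
\end{equation*}
Thus $\vp(t)=\sum_n e^{\im\la_n t}\vp_n^0\Phi_n$ lies in $D(\mcA^{\f12})=H_0^1(\Om;w)$ and has the same norm. This holds for every $t\in[0,T]$, not only almost every $t$, because the series defines an element of $H_0^1(\Om;w)$ that coincides with the continuous $L^2$-valued map $\vp(t)$.

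\emph{Main obstacle.} The delicate point is justifying Step 1 rigorously: the weak formulation gives the ODE only for a.e.\ $t$, in the distributional sense. I would handle this by integrating against $\zeta\in C_0^\iy(0,T)$, which shows that $\vp_n$ is absolutely continuous and satisfies the ODE. Uniqueness for the linear scalar ODE then pins down $\vp_n$.

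An alternative route, closer in spirit to the paper, would pass to the limit in the identity \eqref{04.15.5} for the approximations. That route requires strong convergence of $\vp_{\e_k}(t)$ in the energy norm, which is not available, so the spectral argument above is preferable.
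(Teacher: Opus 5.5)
Your proof is correct. It reaches the result by a more direct route than the paper's, although the core computation (spectral expansion plus the norm formula of Lemma \ref{06.28.L1}) is the same.

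The paper first treats $\vp^0\in C_0^\infty(\Om)$. It invokes $3\al+N>4$ and Lemma \ref{04.14.A1} to place $\vp^0$ in $D(\mcA^{\f32})$, and asserts without derivation that $\vp(t)=\sum_n e^{\im\la_n t}\vp_n^0\Phi_n$. It then reads off the norm and passes to general $\vp^0\in H_0^1(\Om;w)$ by density, using the continuous-dependence bound \eqref{04.07.3}.

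You instead derive $\vp_n(t)=e^{\im\la_n t}\vp_n^0$ directly for arbitrary $\vp^0\in H_0^1(\Om;w)$, by testing the weak formulation with $\Phi_n$. This is the same computation as in Step 2 of Proposition \ref{04.12.P1}. The smooth-data detour and the density step therefore disappear. Your remark that neither $3\al+N>4$ nor $D(\mcA^{\f32})$-regularity plays any role is justified.

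Your route also gives the identity for every $t\in[0,T]$ at once. Continuity $\vp\in C([0,T];L^2(\Om))$ fixes each coefficient pointwise, and the convergent series $\sum_n \la_n|\vp_n^0|^2$ certifies $\vp(t)\in H_0^1(\Om;w)$. By contrast, the paper's limit passage rests on the $\esssup$ bound \eqref{04.15.14}, which by itself only yields the identity for almost every $t$. Getting all $t$ there needs an extra remark, for instance that the smooth-data solutions are Cauchy in $H_0^1(\Om;w)$ for each fixed $t$.

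The paper's two-stage argument gains nothing essential here, since it needs the same series representation for the smooth data anyway.
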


\begin{proof}
Let $\vp^0\in C_0^\infty(\Omega)$. Since $3\alpha+N>4$, Lemma \ref{04.14.A1} gives $\vp^0\in D(\mathcal A^{\frac 32})$. The weak solution of the equation \eqref{12.14.1} has the form
\begin{equation*}
	\vp(t)=\sum_{n=1}^\iy e^{\im \la_n t}\vp_n^0\Phi_n, \mbox{ with } \vp_n^0=(\vp^0,\Phi_n)_{L^2(\Om)} \mbox{ for all } n\in\N^*, 
\end{equation*}
then from Lemma \ref{06.28.L1} we obtain 
\begin{equation}\label{04.15.13}
	\|\vp(t)\|_{H_0^1(\Om;w)}^2=\sum_{n=1}^\iy \la_n |\vp_n^0|^2=\|\vp^0\|_{H_0^1(\Om;w)}^2, \mbox{ for all } t\in [0,T]. 
\end{equation}
	
Now, let $\vp^0\in H_0^1(\Om;w)$. Then there exists a sequence $\{\wt\vp_n^0\}_{n\in\N^*}\s C_0^\iy(\Om;w)$  such that $\wt\vp_n^0\ra \vp^0$ strongly in $H_0^1(\Om;w)$. From \eqref{04.07.3} in Theorem \ref{04.07.T1}, we get 
\begin{equation}\label{04.15.14}
	\esssup_{t\in [0,T]}\|\wt\vp_n(t)-\vp(t)\|_{H_0^1(\Om;w)}\leq C\|\wt \vp_n^0-\vp^0\|_{H_0^1(\Om;w)}.
\end{equation}  
Note that for each $n\in\N^*$, we have 
\begin{equation*}
	\|\wt\vp_n(t)\|_{H_0^1(\Om;w)}=\|\wt\vp_n^0\|_{H_0^1(\Om;w)}, \mbox{ for all } t\in [0,T], 
\end{equation*}
together with this and \eqref{04.15.14}, we get
\begin{equation*}
	\|\vp(t)\|_{H_0^1(\Om;w)}=\lim_{n\ra\iy}\|\wt\vp_n(t)\|_{H_0^1(\Om;w)}=\lim_{n\ra\iy}\|\wt\vp_n^0\|_{H_0^1(\Om;w)}=\|\vp^0\|_{H_0^1(\Om;w)}.
\end{equation*}
This completes the proof of this lemma. 
\end{proof}

\begin{theorem}\label{04.15.T2}
Under the conditions in Theorem \ref{04.15.T1}, there exists a positive constant $C$, depends only on $\al, N$ and $M$, such that 
\begin{equation}\label{04.15.15}
	\begin{split}
			\left(\f{2-\al}{2}T-C\right)\|\vp^0\|_{H_0^1(\Om;w)}^2\leq \theta \iint_{\Sigma_0}\left|\f{\pt\vp}{\pt\nu}\right|^2\df S\df t. 
	\end{split}
\end{equation}
\end{theorem}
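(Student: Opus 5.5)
The plan is to absorb the lower-order term $\frac{C}{\de}\|\vp^0\|_{L^2(\Om)}^2$ in \eqref{04.15.9} of Theorem \ref{04.15.T1} into the left-hand side using the Poincar\'e inequality for the weighted space. There is no need for a compactness--uniqueness argument, which would in any case require a unique continuation property that is not available here.

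First, fix $\de=1$ in \eqref{04.15.9}. For the weak solution $\vp$ with respect to $(\vp^0,0)$, $\vp^0\in H_0^1(\Om;w)$, this gives
\begin{equation*}
\left(\f{2-\al}{2}T-1\right)\|\vp^0\|_{H_0^1(\Om;w)}^2\leq \theta\iint_{\Sigma_0}\left|\f{\pt\vp}{\pt\nu}\right|^2\df S\df t+C_1\|\vp^0\|_{L^2(\Om)}^2,
\end{equation*}
where $C_1$ depends only on $\al,N,M$.

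Next, I will invoke the Poincar\'e inequality \eqref{08.15.10} from Lemma \ref{08.15.L1}, equivalently the bound \eqref{04.10.7} on $\la_1$:
\begin{equation*}
\|\vp^0\|_{L^2(\Om)}^2\leq \la_1^{-1}\|\vp^0\|_{H_0^1(\Om;w)}^2\leq \f{4M^{2-\al}}{(N-2+\al)^2}\|\vp^0\|_{H_0^1(\Om;w)}^2.
\end{equation*}
Here $N\geq 2$ and $\al>0$ ensure $N-2+\al>0$. Substituting this into the previous inequality and moving the term to the left yields \eqref{04.15.15} with
\begin{equation*}
C=1+\f{4C_1M^{2-\al}}{(N-2+\al)^2},
\end{equation*}
which depends only on $\al,N,M$.

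Two small points need checking.

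\begin{enumerate}
\item The constant $C$ must not depend on $T$. This holds because the $C$ in \eqref{04.15.6} (and hence in \eqref{04.15.9}) came only from bounding $\left|\Image\int_\Om \vp(H\cdot\nabla\ol\vp)\df x\right|$, which uses $M$ and $\al$. It was also applied with the conserved norms \eqref{04.10.13} and Lemma \ref{04.15.L2}, so no $T$-dependence enters.
\item When $\f{2-\al}{2}T-C\leq 0$, the inequality is trivial, since the right-hand side is nonnegative. Note that $\theta>0$, because $\int_{\pt\Om}x\cdot\nu\,\df S=N|\Om|>0$ forces $x\cdot\nu>0$ somewhere on $\pt\Om$.
\end{enumerate}

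The only real obstacle is this bookkeeping of the constant's dependence, which I expect to be straightforward. The substantive work, namely the degenerate limit and the restriction $3\al+N>4$, has already been done in Theorem \ref{04.15.T1}.
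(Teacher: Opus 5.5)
Your proposal is correct and is essentially the paper's proof. The paper likewise uses the Poincar\'e inequality \eqref{08.15.10} to bound $\|\varphi^0\|_{L^2(\Omega)}$ by $\|\varphi^0\|_{H_0^1(\Omega;w)}$ in \eqref{04.15.9} and absorbs that term into the left-hand side; the only difference is that the paper keeps $\delta$ free and takes $\delta=\sqrt{C}$ (giving the constant $2\sqrt{C}$), whereas you fix $\delta=1$, which only changes the numerical value of the constant.
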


\begin{proof}
From $\|\vp^0\|_{L^2(\Om)}\leq C\|\vp^0\|_{H_0^1(\Om;w)}$ by \eqref{08.15.10} and \eqref{04.15.9}, we get
\begin{equation*}
		\left(\f{2-\al}{2}T-\de\right)\|\vp^0\|_{H_0^1(\Om;w)}^2\leq \theta \iint_{\Sigma_0}\left|\f{\pt\vp}{\pt\nu}\right|^2\df S\df t+\f{C}{\de}\|\vp^0\|_{H_0^1(\Om;w)}^2, 
\end{equation*}
	i.e., 
	\begin{equation*}
		\left(\f{2-\al}{2}T-\de-\f{C}{\de}\right)\|\vp^0\|_{H_0^1(\Om;w)}^2\leq \theta \iint_{\Sigma_0}\left|\f{\pt\vp}{\pt\nu}\right|^2\df S\df t, 
	\end{equation*}
where the positive constant $C$ depends only on $\al, N$ and $M$. This shows  \eqref{04.15.15} by taking $\de=\sqrt{C}$. 
\end{proof}

\begin{remark}
Let
	\[
	T_0=\frac{2C}{2-\alpha}.
	\]
If $T>T_0$, then
	\[
	\frac{(2-\alpha)T}{2}-C>0.
	\]
	Therefore, the estimate in Theorem \ref{04.15.T2} yields the observability inequality
\begin{equation*}
\|\vp^0\|_{H_0^1(\Om;w)}^2\leq 	C_T \iint_{\Sigma_0}\left|\f{\pt\vp}{\pt\nu}\right|^2\df S\df t, 
\end{equation*}
where
	\[
	C_T=
	\frac{\theta}{\frac{(2-\alpha)T}{2}-C}>0.
	\]
	Hence, for every $T>T_0$, the system satisfies the observability inequality.
\end{remark}

\section{Conclusion}\label{S5}
We have established a non-standard boundary observability inequality for the multi-dimensional degenerate Schr\"odinger equation with an interior degeneracy point. The proof comprises three logical stages. We first develop the functional-analytic setting and prove well-posedness. We then construct a family of non-degenerate approximating problems and show convergence of their solutions. Finally, we derive an observability inequality for the limit problem that quantifies the influence of the degeneracy strength on the observation time. Together, these steps overcome the loss of ellipticity caused by the vanishing principal symbol.

Future work can investigate the sharpness of the observation time threshold to determine the precise minimal time required for observability. From a computational perspective, our constructive approximation scheme provides a basis for numerical discretization to analyze the conditioning of boundary reconstruction near the singularity.

\section*{Acknowledgements}
The authors have disclosed no relevant relationships.

\section*{Data availability statement}
The data that support the findings of this study are available.

\end{document}